\newtheorem{theorem}{Theorem}[section]
\newtheorem{lemma}[theorem]{Lemma}
\newtheorem{proposition}[theorem]{Proposition}
\newtheorem{remark}{Remark}[section]
\newtheorem*{theorem*}{Theorem}
\newtheorem*{corollary*}{Corollary}
\newtheorem*{conjecture*}{Conjecture}
\newtheorem*{lemma*}{Lemma}
\newtheorem*{proposition*}{Proposition}
\newtheorem*{problem*}{Problem}
\newtheorem*{axiom*}{Axiom}
\newtheorem*{remark*}{Remark}
\newtheorem*{example*}{Example}
\newtheorem*{exercise*}{Exercise}
\newtheorem*{definition*}{Definition}
\numberwithin{equation}{section}
\newcommand{\im}{{\rm Im}}
\renewcommand{\eqref}[1]{(\ref{#1})}
\renewcommand{\bigskip}{\vspace{0.2cm}}
\renewcommand{\l}{\left}
\renewcommand{\r}{\right}
\newcommand{\rank}{{\rm rank}}
\newcommand{\cleq}{\lesssim}
\newcommand{\norm}{\Vert}
\newcommand{\wto}{\rightharpoonup}
\def\eqref[#1]{(\ref{#1})}
\def\secref[#1]{Section~\ref{#1}}
\def\appref[#1]{Appendix~\ref{#1}}
\def\lemref[#1]{Lemma~\ref{#1}}
\def\thmref[#1]{Theorem~\ref{#1}}
\def\propref[#1]{Proposition~\ref{#1}}
\def\conjref[#1]{Conjecture~\ref{#1}}
\def\defref[#1]{Definition~\ref{#1}}
\def\abs[#1]{|#1|}
\def\norm[#1]{\left\Vert #1 \right\Vert}
\def\tbra[#1,#2]{\langle #1 | #2\rangle} %inner product% trianguler bracket
\def\rbra[#1,#2]{\l( #1 , #2 \r)} %inner product% round bracket
\def\sbra[#1,#2]{[ #1 | #2 ]} %inner product% square bracket
\def\fbra[#1,#2]{\{ #1 | #2 \}} %inner product% fancy bracket
\def\besov[#1,#2,#3]{B_{#2,#3}^{#1}}
\def\hbesov[#1,#2,#3]{\dot{B}_{#2,#3}^{#1}}
\newcommand{\eps}{\varepsilon}
\newcommand{\la}{\lambda}
\newcommand{\del}{\partial}
\newcommand{\N}{{\mathbb N}}
\newcommand{\R}{{\mathbb R}}
\newcommand{\C}{{\mathbb C}}
\newcommand{\Z}{{\mathbb Z}}
\newcommand{\bS}{{\mathbf S}}
\newcommand{\cA}{{\mathcal A}}
\newcommand{\cB}{{\mathcal B}}
\newcommand{\cG}{{\mathcal G}}
\newcommand{\cI}{{\mathcal I}}
\newcommand{\cK}{{\mathcal K}}
\newcommand{\cP}{{\mathcal P}}
\newcommand{\cQ}{{\mathcal Q}}
\newcommand{\cR}{{\mathcal R}}
\newcommand{\scJ}{{\mathscr J}}
\newcommand{\scK}{{\mathscr K}}
\begin{document}
%%%%%%%%%%%%%%%%%%%%%%%%%%%%%%%%%%%%%%%%%%%%%%%%%%%%%%%%%%%%%%%%%%%%%%%%%%%%%%%%%%%%%%%%%%%%%%%%%%%%%%%%%%%%%%%%

\title[Global Dynamics of sol.s with group invariance for NLS]{Global Dynamics of solutions with group invariance for the nonlinear Schr\"{o}dinger equation}
\author[T. Inui]{Takahisa Inui}
\address{Department of Mathematics\\ Graduate School of Science, Kyoto University, Kyoto\\ Kyoto, 606-8502, Japan}
\email{inui@math.kyoto-u.ac.jp}
%\author[T. Inui]{Takahisa Inui}
%\address{Department of Mathematics\\ Graduate School of Science, Kyoto University, Kyoto\\ Kyoto, 606-8502, Japan}
%\email{inui@math.kyoto-u.ac.jp}
\date{}
\keywords{global dynamics, orthogonal matrix, nonlinear Schr\"{o}dinger equation, group invariance}
\maketitle

\begin{abstract}
We consider the focusing mass-supercritical and energy-subcritical nonlinear Schr\"{o}dinger equation (NLS). We are interested in the global behavior of the solutions to (NLS) with group invariance. By the group invariance, we can determine the global behavior of the solutions above the ground state standing waves. 
\end{abstract}

\tableofcontents

%%%%%%%%%%%%%%%%%%%%%%%%%%%%%%%%%%%%%%%%%%%%%%%%%%%%%%%
%%%%%%%%%%%%%%%%%%%%%%%%%%%%%%%%%%%%%%%%%%%%%%%%%%%%%%%

\section{Introduction}

%%%%%%%%%%%%%%%%%%%%%%%%%%%%%%%%%%%%%%%%%%%%%%%%%%%%%%%

\subsection{Background}
We consider the focusing mass-supercritical and energy-subcritical nonlinear Schr\"{o}dinger equation:
\begin{equation}
\l\{
\begin{array}{ll}
i \del_t u +  \Delta u  + |u|^{p-1}u=0, & (t,x) \in \R \times \R^d,
\\
u(0,x)=u_0(x), & x \in \R^d,
\end{array}
\r.
\tag{NLS}
\label{NLS}
\end{equation}
where $d \in \N$ and $1+4/d<p<1+4/(d-2)$. Note that we regard $1+4/(d-2)$ as $\infty$ if $d=1,2$. 
It is known (see \cite{GV79} and the standard texts \cite{Caz03, Tao06, LP15}) that this equation \eqref[NLS] is locally well-posed in $H^1(\R^d)$ and the energy, the mass, and the momentum, which are defined as follows, are conserved. 
\begin{align}
\tag{Energy}
E(u)&:=\frac{1}{2}\norm[\nabla u]_{L^2}^2 - \frac{1}{p+1}\norm[u]_{L^{p+1}}^{p+1},
\\
\tag{Mass}
M(u)&:=\norm[u]_{L^2}^2,
\\
\tag{Momentum}
P(u)&:=\im \int_{\R^d}  \overline{u(x)} \nabla u(x)dx.
\end{align}

Since a pioneer work by Kenig and Merle \cite{KM06}, many researchers have studied the global dynamics for \eqref[NLS]. For the 3d cubic Schr\"{o}dinger equation, Holmer and Roudenko \cite{HR08} proved that the following two statements hold if the initial data $u_0 \in H^1$ is radially symmetric and satisfies the mass-energy condition $M(u_0)E(u_0) < M(Q) E(Q)$ where $Q$ is the ground state solution.
\begin{itemize}
\item $\norm[u_0]_{L^2}\norm[\nabla u_0]_{L^2}<\norm[Q]_{L^2}\norm[\nabla Q]_{L^2}$ $\Rightarrow$ the solution scatters.
\item $\norm[u_0]_{L^2}\norm[\nabla u_0]_{L^2}>\norm[Q]_{L^2}\norm[\nabla Q]_{L^2}$ $\Rightarrow$ the solution blows up in finite time.
\end{itemize} 
For the non-radial solutions, Duyckaerts, Holmer, and Roudenko \cite{DHR08} obtained the scattering result and Holmer and Roudenko \cite{HR10} proved that the solutions in the above blow-up region blow up in finite time or grow up at infinite time. Fang, Xie, and Cazenave \cite{FXC11} extended the scattering result and Akahori and Nawa \cite{AN13} extended both the scattering and the blow-up result to \eqref[NLS]. 
To explain their result, we introduce some notations. 
Let $\omega$ be a positive number. We define the action $S_\omega$ by
\begin{equation*}
S_{\omega}(\varphi):=E(\varphi)+\frac{\omega}{2} M(\varphi). 
\end{equation*} 
Moreover, let $K$ denote the functional which appears in the virial identity (see \eqref[1.1.1]), that is,
\begin{align*}
K(\varphi)
:=\del_\lambda (S_{\omega}(\varphi^\lambda))|_{\lambda=0}
=\frac{2}{d} \norm[\nabla \varphi]_{L^2}^2 - \frac{p-1}{p+1} \norm[\varphi]_{L^{p+1}}^{p+1},
\end{align*}
where $\varphi^{\lambda}(x):=e^{\lambda} \varphi(e^{\frac{2}{d}\lambda}x)$. 
We consider the minimizing problem
\begin{equation*}
l_{\omega}:=\inf \{S_{\omega}(\varphi): \varphi \in H^1\setminus \{0\} , K(\varphi)=0 \}.
\end{equation*}
It is known  that there exists a unique radial positive solution $Q_{\omega}$ of the elliptic equation
\begin{equation*} 
-\Delta \varphi + \omega \varphi -|\varphi|^{p-1}\varphi=0,
\end{equation*}
and it attains the minimizing problem $l_{\omega}$, that is, $l_{\omega}=S_{\omega}(Q_{\omega})$ and $K(Q_{\omega})=0$ hold (see \cite{Str77, BeLi83_1, BeLi83_2, Kwo89}).
Fang, Xie, and Cazenave \cite{FXC11} proved (1) and Akahori and Nawa \cite{AN13} proved both (1) and (2) in the following theorem. See also (1.26) in \cite{AN13} for the setting by the frequency $\omega$. 
\begin{theorem} \label{thm1.1.0}
Let $\omega>0$, $u_0 \in H^1(\R^d)$ satisfy $S_{\omega}(u_0)<l_{\omega}$, and $u$ be the solution of \eqref[NLS] with the initial data $u_0$.  Then, the following statements hold. 
\begin{enumerate}
\item If $K(u_0) \geq 0$, then the solution $u$ scatters.
\item If $K(u_0)<0$, then the solution $u$ blows up in finite time or grows up at infinite time. More precisely, one of the following four cases occurs.
\begin{enumerate}
\item $u$ blows up in finite time in both directions.
\item $u$ blows up in positive finite time and $u$ is global in the negative time direction and $\limsup_{t\to -\infty} \norm[\nabla u(t)]_{L^2}=\infty$. 
\item $u$ blows up in negative finite time and $u$ is global in the positive time direction and $\limsup_{t\to \infty} \norm[\nabla u(t)]_{L^2}=\infty$. 
\item $u$ is global in both time directions and $\limsup_{t\to \pm \infty} \norm[\nabla u(t)]_{L^2}=\infty$. 
\end{enumerate} 
\end{enumerate}
\end{theorem}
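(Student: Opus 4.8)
The plan is to prove both parts via the variational structure attached to the minimization problem $l_\omega$: part (2) by a localized virial argument, and part (1) by the Kenig--Merle concentration--compactness/rigidity method, following \cite{FXC11, AN13}. First I would record the \emph{variational preliminaries}. Introduce the two sets
\[
\cA_\omega^+ := \{\varphi \in H^1\setminus\{0\} : S_\omega(\varphi) < l_\omega,\ K(\varphi) \geq 0\}, \qquad
\cA_\omega^- := \{\varphi \in H^1 : S_\omega(\varphi) < l_\omega,\ K(\varphi) < 0\}.
\]
Since $S_\omega$ is conserved while $K(\psi)=0$ with $\psi\neq 0$ forces $S_\omega(\psi)\geq l_\omega$, the flow of \eqref[NLS] cannot carry a datum across $\{K=0\}$, so each of $\cA_\omega^{\pm}$ is invariant on the maximal interval of existence. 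Using the functional $J(\varphi):=S_\omega(\varphi)-\tfrac1{p-1}K(\varphi)=\big(\tfrac12-\tfrac2{d(p-1)}\big)\norm[\nabla\varphi]_{L^2}^2+\tfrac\omega2 M(\varphi)\geq 0$ (the coefficient is positive since $p>1+4/d$) and the scaling identity $l_\omega=\inf\{J(\varphi):\varphi\neq0,\ K(\varphi)\leq 0\}$, one gets on $\cA_\omega^+$ a uniform bound $\norm[\nabla u(t)]_{L^2}^2\lesssim l_\omega$ together with a coercivity estimate $K(u(t))\gtrsim\min\{\norm[\nabla u(t)]_{L^2}^2,\ l_\omega-S_\omega(u_0)\}$, and on $\cA_\omega^-$ the uniform bound $K(u(t))\leq-(p-1)(l_\omega-S_\omega(u_0))<0$ for all $t$ in the maximal interval.

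\textbf{Part (2).} If $xu_0\in L^2$, the virial identity gives $\tfrac{d^2}{dt^2}\int|x|^2|u(t)|^2\,dx=c\,K(u(t))\leq-c(p-1)(l_\omega-S_\omega(u_0))$ for a fixed $c>0$, incompatible with global existence; running this forward and backward gives finite-time blow-up in both directions, i.e.\ case (a). For general $u_0\in H^1$, assume $u$ is forward-global with $\sup_{t\geq 0}\norm[\nabla u(t)]_{L^2}<\infty$. Choosing a radial cutoff $\chi_R$ equal to $|x|^2$ on $|x|\leq R$ and bounded with bounded derivatives, the truncated variance $V_R(t):=\int\chi_R(x)|u(t)|^2\,dx$ obeys $V_R''(t)\leq c\,K(u(t))+C\big(R^{-2}M(u_0)+\text{terms controlled by }\norm[\nabla u(t)]_{L^2}^2\text{ on }|x|>R\big)$; for $R$ large the leading term $c\,K(u(t))\leq-c(p-1)(l_\omega-S_\omega(u_0))$ dominates, so $V_R''(t)\leq-\tfrac12 c(p-1)(l_\omega-S_\omega(u_0))$, contradicting $V_R\geq0$. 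Hence a forward-global solution in $\cA_\omega^-$ has $\limsup_{t\to\infty}\norm[\nabla u(t)]_{L^2}=\infty$, and symmetrically in negative time; distinguishing, in each time direction, whether the solution blows up in finite time or is global with $\limsup\norm[\nabla u]_{L^2}=\infty$ yields exactly the four cases (a)--(d).

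\textbf{Part (1).} Small-data theory gives scattering when $\norm[u_0]_{H^1}$, hence $S_\omega(u_0)$, is small. Let $S_\omega^*$ be the supremum of $m\in(0,l_\omega)$ such that every $u_0\in\cA_\omega^+$ with $S_\omega(u_0)<m$ scatters in both directions, and suppose $S_\omega^*<l_\omega$ for contradiction. A linear-plus-nonlinear profile decomposition along a non-scattering sequence at level $S_\omega^*$, together with the coercivity above and the long-time perturbation lemma, produces a critical element $u_c\in\cA_\omega^+$ with $S_\omega(u_c)=S_\omega^*$, global, non-scattering, and with $\{u_c(t)\}$ precompact in $H^1$ modulo spatial translations $x(t)$ (no scaling parameter appears, since \eqref[NLS] has no $H^1$-invariant scaling). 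A Galilean transformation to the frame $P(u_c)=0$ keeps the solution in $\cA_\omega^+$ (it lowers $E$ and does not decrease $\norm[\nabla\,\cdot\,]_{L^2}$, hence does not decrease $K$), and then conservation of mass and momentum combined with the precompactness yields $|x(t)|=o(|t|)$ as $|t|\to\infty$. Since $K(u_c(t))\geq\eps_0>0$ uniformly on the precompact orbit, a localized virial/Morawetz functional $\cM_R(t)$ with $|\cM_R(t)|\lesssim R$ and $\tfrac{d}{dt}\cM_R(t)\geq c\eps_0$ on $[0,T]$ — the derivative bound valid once $R$ is large enough that, using $|x(t)|=o(t)$, the bulk of the $L^2$-mass stays in $|x|\lesssim R$ on $[0,T]$ — gives $c\eps_0 T\lesssim R(T)$ with $R(T)=o(T)$; letting $T\to\infty$ forces $\eps_0=0$, hence $u_c=0$, contradicting non-scattering. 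Therefore $S_\omega^*=l_\omega$ and (1) follows.

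\textbf{Main obstacle.} The crux is the rigidity step in part (1): constructing the precompact critical element from the profile decomposition in the full mass-supercritical energy-subcritical range, controlling the translation path $x(t)$ via momentum conservation, and closing the virial/Morawetz estimate. Part (2) is comparatively routine once the localized virial inequality with the correct error terms is in place.
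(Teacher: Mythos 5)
Your overall strategy coincides with the one this paper uses (it treats the theorem as the $G=\{(0,\cI_d)\}$ case of its Theorem 1.1): part (1) by Kenig--Merle concentration compactness plus a translation-centered rigidity argument as in \cite{DHR08,FXC11}, and part (2) by first upgrading $K(u_0)<0$ to a uniform bound $K(u(t))\le -c(l_\omega-S_\omega(u_0))<0$ on the lifespan (your functional $S_\omega-\tfrac{1}{p-1}K$ plays the role of the paper's $J_\omega=S_\omega-\tfrac{d}{4}K$ in Lemma \ref{lem2.4}; both work). Part (1) of your sketch is essentially the paper's argument and I see no structural problem there.

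The genuine gap is in your part (2) virial step for general (non-radial, infinite-variance) data. You claim that for a \emph{fixed} large $R$ one has $V_R''(t)\le c\,K(u(t))+\mathrm{error}$ with the error dominated uniformly in $t$, hence $V_R''\le -\mathrm{const}<0$ for all $t\ge 0$, contradicting $V_R\ge 0$. The error in the localized virial identity contains $\int_{|x|>R}|u(t)|^{p+1}\,dx$ (and exterior gradient terms), and without radial symmetry or a decay assumption there is no reason these are small for all $t$ once $R$ is fixed: even with $\sup_t\Vert\nabla u(t)\Vert_{L^2}<\infty$, mass and potential energy can migrate to $|x|>R$ as $t\to\infty$. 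This is exactly the obstruction that makes the non-radial case nontrivial, and it is why the paper outsources this step to Theorem 2.1 of \cite{DWZ13pre}. The correct closing of the argument couples the radius to the time window: one controls $\Vert u(t)\Vert_{L^2(|x|>R)}$ on $[0,T]$ by the initial exterior mass plus a flux term of size $TR^{-1}\sup_t\Vert\nabla u(t)\Vert_{L^2}\Vert u\Vert_{L^2}$, uses Gagliardo--Nirenberg to bound $\int_{|x|>R}|u|^{p+1}$ by powers of that exterior mass and of the (assumed bounded) gradient, and then exploits $V_R(0)=o(R^2)$ and $V_R'(0)=o(R)$ (valid for any $u_0\in H^1$, by splitting $|x|\le\rho$ and $|x|>\rho$) to get $V_R(T)\le o(R^2)+o(R)T-c\delta T^2<0$ for $R\sim T/\eta$ and $T$ large. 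As written, your "fixed $R$, all $t$" version of the inequality would fail, so either reproduce the \cite{DWZ13pre} scheme or cite it as the paper does.

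A secondary point in part (1): the Galilean boost $u\mapsto e^{i(x\cdot\xi_0-|\xi_0|^2t)}u(t,x-2t\xi_0)$ with $\xi_0=-P(u)/M(u)$ \emph{decreases} the kinetic energy by $|P(u)|^2/M(u)$ (it does not leave it unchanged or increase it), so it may decrease $K$; your stated reason for the boosted solution remaining in the set $\{S_\omega<l_\omega,\ K\ge 0\}$ is backwards. The standard fix, and the one effectively used in Lemma \ref{lem3.14} via \cite{DHR08,AN13}, is that a critical element with $P(u^c)\neq 0$ would, after boosting, yield a non-scattering solution with strictly smaller action, contradicting the minimality of the critical action level; hence $P(u^c)=0$ already. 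This is easily repaired but should be stated correctly.
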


In the present paper, we weaken the condition $S_{\omega}<l_{\omega}$ in \thmref[thm1.1.0] by group invariance.

For other studies of global dynamics of dispersive equations, see \cite{DR10, NS12CVPDE, DR15} (other global dynamics of \eqref[NLS]),  \cite{KM06, DM09, LZ09, KV10, Dod14pre, NR15pre} (energy-critical NLS), \cite{KVZ08, KTV09, Dod15} (mass-critical NLS), \cite{Mas15, Mas16pre, KMMV16pre} (mass-subcritical NLS), \cite{KM08, DKM13, DJKM16pre} (wave equations), \cite{IMN11, KSV12, NS12ARMA, IMN14} (nonlinear Klein-Gordon equations), and references therein.

%%%%%%%%%%%%%%%%%%%%%%%%%%%%%%%%%%%%%%%%%%%%%%%%%%%%%%%

\subsection{Main Result}

We are interested in the global behavior of the solutions with group invariance for \eqref[NLS]. 
Let $O(d)$ denote the set of $d \times d$ orthogonal matrices, \textit{i.e.} 
\begin{equation*}
O(d):=\{ \cR \in M_d(\R): \cR^T \cR=\cI_d \}, 
\end{equation*}
where the transpose of a matrix $\cA$ is written by $\cA^T$ and $\cI_d$ denotes the $d \times d$ identity matrix. 
$\R/2\pi \Z \times O(d)$ is a group with the binary operation $(+,\cdot)$. Let a subgroup $G$ of $\R/2\pi \Z \times O(d)$ satisfy the following assumption throughout this paper. 

(A). For $(\theta_1,\cG_1), (\theta_2,\cG_2) \in G$, if $\cG_1=\cG_2$, then we have $\theta_1=\theta_2$. 

\noindent Due to the assumption (A), we can use the notation $\cG$  without confusion to denote not only a matrix but also an element of $G$. 
For a subgroup $G$ of $\R/2\pi\Z \times O(d)$, we say that a function $\varphi$ is $G$-invariant (or with $G$-invariance) if $ \varphi=\cG\varphi$ for all $\cG \in G$, where $\cG \varphi(x):= e^{-i\theta} (\varphi \circ \cG^{-1})(x)=e^{-i\theta} \varphi(\cG^{-1} x)$ for $\cG=(\theta,\cG)\in \R/2\pi\Z \times O(d)$. 
We define the Sobolev space with $G$-invariance by
\[ H_{G}^1:=\{ \varphi \in H^1(\R^d): \varphi=\cG \varphi, \forall \cG \in G\}. \]
If the initial data $u_0$ belongs to $H_{G}^1$, then the corresponding solution to \eqref[NLS] is also $G$-invariant since the Laplacian $\Delta$ is invariant for group actions by $\R/2\pi\Z \times O(d)$ and \eqref[NLS] is gauge invariant. 
We remark that if (A) does not hold, then the results in the present paper is trivial since we have $H_{G}^1=\{0\}$.

For a subgroup $G$ of $\R/2\pi\Z \times O(d)$, we consider the restricted minimizing problem
\begin{equation*}
l_{\omega}^G:=\inf \{S_{\omega}(\varphi): \varphi \in H_{G}^1\setminus \{0\} , K(\varphi)=0 \},
\end{equation*}
and we define subsets $\scK_{G,\omega}^{\pm}$ in $H^1(\R^d)$ by
\begin{align*}
\scK_{G,\omega}^{+}&:=\{ \varphi \in H_{G}^1: S_{\omega}(\varphi)<l_{\omega}^{G}, K(\varphi)\geq 0 \},
\\
\scK_{G,\omega}^{-}&:=\{ \varphi \in H_{G}^1: S_{\omega}(\varphi)<l_{\omega}^{G}, K(\varphi)<  0 \}.
\end{align*}

We define a critical action for the data with $G$-invariance by 
\begin{align*}
{\bf S}_{\omega}^{G}:=&\sup\{ {\bf S}\in \R: \forall \varphi \in \scK_{G,\omega}^{+}, S_{\omega}(\varphi)<{\bf S} 
\\
& \Rightarrow \text{ the solution to {\eqref[NLS]} with the initial data } \varphi \text{ belongs to } L^{\alpha}(\R:L^r(\R^d))\}.
\end{align*} 
See \eqref[3.0] below for the definition of $\alpha$ and $r$. We remark that $u \in L^{\alpha}(\R:L^r(\R^d))$ implies that the solution $u$ scatters (see \propref[prop3.2]). Here, we say that the solution $u$ to \eqref[NLS] scatters if and only if there exist $\varphi_{\pm} \in H^1(\R^d)$ such that 
\[ \norm[u(t)-e^{it\Delta} \varphi_{\pm}]_{H^1} \to 0 \text{ as } t \to \pm \infty,\]
where $e^{it\Delta} $ denotes the free propagator of the Schr\"{o}dinger equation. 
We say that a subgroup $G'$ of $G$ satisfies {\rm ($\ast$)} if there exists a sequence $\{x_n\} \subset \R^d$ such that 
\begin{equation*}
\l\{
\begin{array}{l}
\{x_n - \cG' x_n\} \text{ is bounded for all } \cG' \in G',
\\
|x_n - \cG x_n| \to \infty \text{ as } n \to \infty, \text{ for all } \cG \in G\setminus G'.
\end{array}
\r.
\end{equation*}
For a finite group $G$, we define 
\[ m_{\omega}^{G} := \min_{G' \subsetneq G  \text{ satisfying {\rm ($\ast$)}} } \frac{\#G}{\#G'} \bS_{\omega}^{G'}, \]
where $\#X$ denotes the number of the elements in a set $X$. 

Our aim in the present paper is to prove the following theorem.
\begin{theorem} \label{thm1.1}
Let $\omega>0$, $G$ be a subgroup of $\R/2\pi\Z \times O(d)$, $u_0 \in H_G^1$ satisfy $S_{\omega}(u_0)< l_{\omega}^G$, and $u$ be the solution of \eqref[NLS] with the initial data $u_0$.  Then, the following statements hold. 
\begin{enumerate}
\item
In addition, we assume either that (i) $G$ is a finite group and $u_0$ satisfies $S_{\omega}(u_0)< m_{\omega}^{G}$ or that (ii) $G$ is an infinite group such that the embedding $H_{G}^1 \hookrightarrow L^{p+1}(\R^d)$ is compact. If $K(u_0)\geq 0$, then the solution $u$ scatters.
\item If $K(u_0)<0$, then the solution $u$ blows up in finite time or grows up at infinite time. More precisely, one of (a)--(d) in \thmref[thm1.1.0] occurs
\end{enumerate}
\end{theorem}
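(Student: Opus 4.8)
The proof splits along the two statements; statement (2) is close to the known theory, while statement (1) carries the new content.

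\medskip

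\noindent\emph{Part (2).} This is the restriction of \thmref[thm1.1.0](2) to $H_G^1$, and its proof needs only cosmetic changes. The point is that $\scK_{G,\omega}^-$ is invariant under the flow of \eqref[NLS]: a $G$-invariant datum produces a $G$-invariant solution, $S_\omega$ and $M$ are conserved, and $K(u(t))$ cannot vanish, since $\psi\in H_G^1\setminus\{0\}$ with $K(\psi)=0$ would satisfy $S_\omega(\psi)\geq l_\omega^G$, contradicting conservation of $S_\omega$. A scaling argument upgrades this to a uniform bound $K(u(t))\leq -c<0$ on the maximal interval. Inserting this into the virial identity $\frac{d^2}{dt^2}\int_{\R^d}\abs[x]^2\abs[u(t)]^2\,dx=4d\,K(u(t))$ and its truncated version for general $H^1$ data, and then quoting the refinements of Holmer--Roudenko and Akahori--Nawa that turn ``the virial functional is uniformly concave'' into the precise list (a)--(d), yields the claim; no symmetry beyond membership in $H_G^1$ enters.

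\medskip

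\noindent\emph{Part (1), set-up.} By the definition of $\bS_\omega^G$ it suffices to prove $\bS_\omega^G\geq\min\{l_\omega^G,m_\omega^G\}$ (in case (ii), where $m_\omega^G$ is not defined, the claim is simply $\bS_\omega^G\geq l_\omega^G$, i.e.\ every element of $\scK_{G,\omega}^+$ scatters). Suppose this fails and put $\ell^\ast:=\bS_\omega^G<\min\{l_\omega^G,m_\omega^G\}$; choose $\varphi_n\in\scK_{G,\omega}^+$ with $S_\omega(\varphi_n)\downarrow\ell^\ast$ whose solutions $u_n$ do not belong to $L^{\alpha}(\R:L^r(\R^d))$. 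A variational argument based on $p>1+4/d$ shows that $\scK_{G,\omega}^+$ is flow-invariant, that $K(u_n(t))\geq0$ persists, and hence that each $u_n$ is global with $\sup_t\norm[\nabla u_n(t)]_{L^2}\lesssim 1$. I would then run the Kenig--Merle concentration--compactness/rigidity scheme, the new ingredient being the interaction between $G$ and the translation parameters of the profile decomposition.

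\medskip

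\noindent\emph{Concentration--compactness with $G$-symmetry (case (i)).} Apply the linear profile decomposition in $H^1(\R^d)$ to $\{\varphi_n\}$, $\varphi_n=\sum_{j=1}^{J}e^{it_n^j\Delta}\psi^j(\,\cdot-x_n^j)+w_n^J$, with the usual orthogonality of $(t_n^j,x_n^j)$ and Strichartz-smallness of $w_n^J$; time-escaping and (if present) Galilean-boosted profiles are asymptotically free, hence scatter, and may be set aside. Because $\varphi_n=\cG\varphi_n$ for all $\cG=(\theta,\cR)\in G$ and $\cG$ commutes with $e^{it\Delta}$, applying $\cG$ and invoking uniqueness shows that $G$ permutes the profiles: the $G$-orbit of $\psi^j$ has $\#G/\#G_j$ elements, where $G_j:=\{\cG\in G:\sup_n\abs[x_n^j-\cR x_n^j]<\infty\}$ is a subgroup, $\psi^j$ may be taken $G_j$-invariant, and the energy trapping gives $K(\psi^j)\geq0$. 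If $G_j=G$ then $\psi^j\in H_G^1$, and if the decomposition contains any further object then $S_\omega(\psi^j)<\ell^\ast=\bS_\omega^G$, so $\psi^j\in\scK_{G,\omega}^+$ and its solution scatters by definition of $\bS_\omega^G$. If $G_j\subsetneq G$ then, after a further subsequence, $G_j$ satisfies {\rm ($\ast$)} with witness $\{x_n^j\}$, the $\#G/\#G_j\geq2$ cores $\{\cR x_n^j\}$ are mutually infinitely separated, and almost-orthogonality of $S_\omega$ gives $\tfrac{\#G}{\#G_j}S_\omega(\psi^j)\leq\ell^\ast$; using $\ell^\ast<l_\omega^G$ and the comparison $l_\omega^G\leq\tfrac{\#G}{\#G_j}\,l_\omega^{G_j}$ (test $l_\omega^G$ with $\#G/\#G_j$ infinitely-separated copies of a near-minimizer for $l_\omega^{G_j}$, which requires precisely {\rm ($\ast$)}), we obtain
\[
S_\omega(\psi^j)\ \leq\ \tfrac{\#G_j}{\#G}\,\ell^\ast\ <\ \tfrac{\#G_j}{\#G}\,m_\omega^G\ \leq\ \bS_\omega^{G_j},\qquad S_\omega(\psi^j)\ <\ \tfrac{\#G_j}{\#G}\,l_\omega^G\ \leq\ l_\omega^{G_j},
\]
where $\tfrac{\#G_j}{\#G}m_\omega^G\leq\bS_\omega^{G_j}$ because $G_j$ is among the minimands defining $m_\omega^G$; hence $\psi^j\in\scK_{G_j,\omega}^+$ with action below $\bS_\omega^{G_j}$, so its solution, and that of each orbit-mate by translation invariance of \eqref[NLS], scatters. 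Thus if the decomposition has two or more objects (counting orbit multiplicity and $w_n^J$), every nonlinear profile scatters, and the long-time perturbation lemma forces $u_n\in L^{\alpha}(\R:L^r)$ for $n$ large --- a contradiction. Therefore the decomposition reduces to a single $G$-invariant profile with $w_n^J\to0$ in $H^1$; discarding the time-escaping subcase and translating, this produces a solution $v$ with $v(0)\in\scK_{G,\omega}^+$, $S_\omega(v(0))=\ell^\ast$, $v\notin L^{\alpha}(\R:L^r)$, and trajectory $\{v(t):t\in\R\}$ precompact in $H^1$ modulo translations. In case (ii) the compact embedding $H_G^1\hookrightarrow L^{p+1}(\R^d)$ renders the profile decomposition trivial with no translation parameter, so the same $v$ (with $\{v(t)\}$ precompact in $H_G^1$) is obtained directly, without any of the combinatorics above.

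\medskip

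\noindent\emph{Rigidity.} It remains to rule out $v$. Since $v(0)\neq0$, mass conservation gives $v(t)\neq0$ for all $t$, and $S_\omega(v(t))=\ell^\ast<l_\omega^G$ with $v(t)\in H_G^1$ forbids $K(v(t))=0$; with $K\geq0$ preserved and the trajectory precompact, $K(v(t))\geq c_0>0$ uniformly in $t$. Removing the translation $x(t)$ (momentum conservation together with precompactness precludes linear-in-$t$ motion), a localized virial estimate gives $\frac{d^2}{dt^2}\int_{\R^d}\phi_R(x)\abs[v(t)]^2\,dx\geq 4d\,c_0-\eps$ uniformly in $t$ for $R$ large, while the differentiated quantity is bounded in $t$ --- impossible. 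Hence no critical element exists and $\bS_\omega^G\geq\min\{l_\omega^G,m_\omega^G\}$, which is statement (1). I expect the main obstacle to be the profile bookkeeping of the previous paragraph --- pinning down the subgroup $G_j$ attached to each orbit, checking that $G_j\subsetneq G$ forces $G_j$ to satisfy {\rm ($\ast$)}, and arranging the almost-orthogonality so that the multiplicity $\#G/\#G_j$ produces exactly the quantity $m_\omega^G$ --- together with verifying that the perturbation step reconstructing $u_n$ from the (now scattering) nonlinear profiles is compatible with the $G$-invariance. By contrast the rigidity step is essentially the non-symmetric one.
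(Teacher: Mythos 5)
Your proposal is correct and follows essentially the same route as the paper: flow-invariance of $\scK_{G,\omega}^{-}$ plus a uniform bound $K(u(t))\leq -c<0$ fed into the Du--Wu--Zhang virial machinery for part (2), and for part (1) the Kenig--Merle scheme with a $G$-adapted profile decomposition in which each profile carries a stabilizer subgroup $G_j$, the multiplicity $\#G/\#G_j$ interacts with the almost-orthogonality of $S_\omega$ to produce exactly the threshold $m_\omega^G$, and a translation-path rigidity argument (with $\cG x(t)=x(t)$ and $P(u)=0$) eliminates the critical element. The only organizational difference is that you obtain the symmetrized decomposition by applying the standard profile decomposition and then grouping profiles into $G$-orbits via essential uniqueness, whereas the paper builds the symmetrization $\sum_{\cG\in G}\cG(\tau_{x_n^j}\psi^j)/\#G$ directly into the inductive extraction (\propref[LPD], together with Lemmas \ref{lemA.0} and \ref{lemA} to normalize the cores so that $\cG' x_n^j=x_n^j$ exactly); both yield the same conclusion.
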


If $G=\{(0,\cI_d)\}$, then \thmref[thm1.1] is nothing but \thmref[thm1.1.0] (\cite{FXC11,AN13}). \thmref[thm1.1] means that we can classify the solution whose mass-energy is larger than that of the ground state standing waves into scattering and blow-up (grow-up) by group invariance. 
See Appendix \ref{secB} for the applications of \thmref[thm1.1]. 

\begin{remark} 
\ 
\begin{enumerate}
\item For \thmref[thm1.1] (1):
\begin{enumerate}
\item[(i)] If $G$ is finite, then the embedding $H_{G}^1 \hookrightarrow L^{p+1}(\R^d)$ is  not compact. 
\item[(ii)] If $G$ satisfies that for all $x \in \R^d \setminus \{0\}$, $Gx:=\{\cG x: \cG \in G \}$ has infinitely many elements, then the embedding $H_{G}^1 \hookrightarrow L^{p+1}(\R^d)$ is compact (see \cite{BW95}). 
\item[(iii)] In the case that $G$ is an infinite group such that  the embedding $H_{G}^1 \hookrightarrow L^{p+1}(\R^d)$ is  not compact, the scattering result for the solutions with $G$-invariance is an open problem. 
 \end{enumerate}
\item For \thmref[thm1.1] (2):
\begin{enumerate}
\item[(i)] If the solution blows up in finite time, then $\norm[\nabla u(t)]_{L^2}$ diverges at the maximal existence time by the local well-posedness. 
\item[(ii)] If $G=\{(0,\cG):\cG \in O(d)\}$, that is, the solution is radially symmetric, then the solution blows up in finite time in both time directions. See \cite{OG91} and \cite[Theorem 1.1 (2)]{HR08}.
\item[(iii)] If the initial data $u_0$ satisfies $|x|u_0 \in L^2(\R^d)$, then the corresponding solution $u$ blows up in finite time in both time directions. This statement follows from Glassey's argument (see \cite{Gla77}) and the virial identity
\begin{equation}
\label{1.1.1}
\frac{d^2}{dt^2} \norm[xu(t)]_{L^2}^2 = 4dK(u(t)).
\end{equation}
See \cite{Mar97} for other sufficient conditions to blow up. 
\item[(iv)] For the cubic nonlinear Schr\"{o}dinger equation in two dimensions, Martel and Rapha\"{e}l \cite{MR15pre} obtained a grow-up solution. However, we don't know whether a grow-up solution exists or not for \eqref[NLS]. 
\end{enumerate}
\end{enumerate}
\end{remark}

%%%%%%%%%%%%%%%%%%%%%%%%%%%%%%%%%%%%%%%%%%%%%%%%%%%%%%%

\subsection{Idea of proof}

The proof of the scattering part, \thmref[thm1.1] (1), is based on the contradiction argument by Kenig and Merle \cite{KM06}. 
That is, we find a critical element, whose orbit is precompact in $H_{G}^1$, by assuming that \thmref[thm1.1] (1) fails and using  a concentration compactness argument, and we eliminate it by a rigidity argument. 
In the argument, we use the non-admissible Strichartz estimate, which was also used in Fang, Xie, and Cazenave \cite{FXC11}. 
However, unlike \cite{FXC11} and \cite{AN13}, we use a linear profile decomposition lemma for functions with group invariance to extend the mass-energy condition. See \propref[LPD] in the case that $G$ is finite. That is why we need to modify the construction of a critical element and the rigidity argument. If $G$ satisfies that the embedding $H_{G}^1 \hookrightarrow L^{p+1}(\R^d)$ is compact, we can eliminate the translation parameter in the linear profile decomposition by the compactness of the embedding. In this case, the same argument as in the radial case does work. 

The blow-up part, \thmref[thm1.1] (2), follows directly from the result of Du, Wu, and Zhang \cite{DWZ13pre}.

The rest of the present paper is organized as follows. In Section 2, we reorganize variational argument for the data with $G$-invariance and show the blow-up result, \thmref[thm1.1] (2), by the method of \cite{DWZ13pre}. 
Section 3 is devoted to preliminaries for the proof of the scattering result, \thmref[thm1.1] (1).
In Section 4, we consider the case that $G$ is finite. In Section 4.1, we show the linear profile decomposition lemma for functions with the finite group invariance, which is a key ingredient. In Section 4.2, we prove \thmref[thm1.1] (1) (i) by constructing a critical element and the rigidity argument. In Section 5, we prove \thmref[thm1.1] (1) (ii). Its proof is similar to in the radial case. 
We collect useful lemmas in Appendix A. In Appendix B, we introduce some applications of \thmref[thm1.1].

%%%%%%%%%%%%%%%%%%%%%%%%%%%%%%%%%%%%%%%%%%%%%%%%%%%%%%%
%%%%%%%%%%%%%%%%%%%%%%%%%%%%%%%%%%%%%%%%%%%%%%%%%%%%%%%

\section{Variational Argument and Blow-Up Result}

%%%%%%%%%%%%%%%%%%%%%%%%%%%%%%%%%%%%%%%%%%%%%%%%%%%%%%%

\begin{lemma} \label{lem2.2}
If $K(\varphi) \geq 0$, then we have
\begin{equation}
S_{\omega}(\varphi) \leq \frac{1}{2}\norm[\nabla \varphi]_{L^2}^2 +\frac{\omega}{2} \norm[\varphi]_{L^2}^2 
\leq \frac{d(p-1)}{d(p-1)-4} S_{\omega}(\varphi). 
\end{equation}
\end{lemma}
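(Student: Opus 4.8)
The plan is to derive both inequalities by purely elementary manipulations of the three scalars $\norm[\nabla\varphi]_{L^2}^2$, $\norm[\varphi]_{L^{p+1}}^{p+1}$ and $\norm[\varphi]_{L^2}^2$; no functional-analytic input enters. I would write $a:=\norm[\nabla\varphi]_{L^2}^2$, $b:=\norm[\varphi]_{L^{p+1}}^{p+1}$, $c:=\norm[\varphi]_{L^2}^2$, so that by definition $S_{\omega}(\varphi)=\frac12 a-\frac1{p+1}b+\frac\omega2 c$ and $K(\varphi)=\frac2d a-\frac{p-1}{p+1}b$, while the middle quantity in the statement equals $\frac12 a+\frac\omega2 c$.

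The left inequality is immediate from the exact identity $\left(\frac12 a+\frac\omega2 c\right)-S_{\omega}(\varphi)=\frac1{p+1}b\geq 0$, which uses only $b\geq 0$ and needs neither $K(\varphi)\geq0$ nor $\omega>0$. For the right inequality the single substantive step is to estimate $\frac1{p+1}b$ via the hypothesis: $K(\varphi)\geq0$ says $\frac2d a\geq\frac{p-1}{p+1}b$, i.e. $\frac1{p+1}b\leq\frac{2}{d(p-1)}a$, and adding the nonnegative term $\frac{2\omega}{d(p-1)}c$ gives
\[
\frac1{p+1}b \leq \frac{2}{d(p-1)}a+\frac{2\omega}{d(p-1)}c = \frac{4}{d(p-1)}\left(\frac12 a+\frac\omega2 c\right).
\]
Inserting this into the identity $\frac12 a+\frac\omega2 c=S_{\omega}(\varphi)+\frac1{p+1}b$ yields $\left(1-\frac{4}{d(p-1)}\right)\left(\frac12 a+\frac\omega2 c\right)\leq S_{\omega}(\varphi)$, and dividing by $1-\frac{4}{d(p-1)}=\frac{d(p-1)-4}{d(p-1)}$ gives the claimed bound.

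There is no real obstacle here; the only point one must not overlook is that the division in the last step is legitimate precisely because $d(p-1)-4>0$, which is exactly the mass-supercriticality assumption $1+4/d<p$ (this is also why the stated constant is $>1$, consistently with the left inequality). An equivalent route would be to eliminate $b$ between the two defining formulas, writing $S_{\omega}(\varphi)=\frac{d(p-1)-4}{2d(p-1)}a+\frac1{p-1}K(\varphi)+\frac\omega2 c$, multiplying through by $\frac{d(p-1)}{d(p-1)-4}$, and then discarding the nonnegative term coming from $K(\varphi)\geq0$ and comparing $\frac{d(p-1)}{d(p-1)-4}\cdot\frac\omega2 c$ with $\frac\omega2 c$; but the computation above is the most transparent.
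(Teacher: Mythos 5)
Your proposal is correct and is essentially the same elementary computation as the paper's: both use $K(\varphi)\geq 0$ to control the $L^{p+1}$ term, add a nonnegative multiple of the mass, and divide by the positive constant $d(p-1)-4$ (the mass-supercriticality condition). The only cosmetic difference is that you bound $\frac{1}{p+1}\norm[\varphi]_{L^{p+1}}^{p+1}$ directly while the paper rearranges through $E(\varphi)$; the content is identical.
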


\begin{proof}
The left inequality holds obviusly. We prove the right inequality. We have
\begin{align*}
0 \leq K(\varphi)
= \l( \frac{2}{d}-\frac{p-1}{2}\r) \norm[\nabla \varphi]_{L^2}^2 +(p-1) E(\varphi).
\end{align*} 
Adding $\omega(p-1) M(\varphi )/2$, we obtain
\begin{equation}
\l( \frac{p-1}{2}-\frac{2}{d}\r) \norm[\nabla \varphi]_{L^2}^2 +\frac{\omega}{2}(p-1) M(\varphi ) \leq (p-1) S_{\omega}(\varphi).
\end{equation}
Therefore, 
\begin{equation}
\l( p-1-\frac{4}{d}\r) \l\{ \frac{1}{2} \norm[\nabla \varphi]_{L^2}^2 +\frac{\omega}{2} M(\varphi ) \r\} \leq (p-1) S_{\omega}(\varphi).
\end{equation}
This completes the proof. 
\end{proof}

\begin{lemma} \label{lem2.3}
If $u_0 \in \scK_{G,\omega}^{+}$, then the corresponding solution $u(t)$ belongs to $\scK_{G,\omega}^{+}$ for all existence time $t$. Moreover, if $u_0 \in \scK_{G,\omega}^{-}$, then the corresponding solution $u(t)$ belongs to $\scK_{G,\omega}^{-}$ for all existence time $t$.
\end{lemma}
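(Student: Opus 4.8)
The plan is to combine conservation of the action $S_\omega$ (which follows from conservation of energy and mass) with a continuity-in-time argument for the sign of $K(u(t))$, exactly as in the classical Kenig--Merle/Payne--Sattinger invariant-set scheme. First observe that along the solution $u(t)$ the quantity $S_\omega(u(t))$ is constant, so if $u_0 \in \scK_{G,\omega}^{\pm}$ then $S_\omega(u(t)) = S_\omega(u_0) < l_\omega^G$ for every $t$ in the maximal existence interval; moreover $u(t) \in H_G^1$ for all $t$ because $H_G^1$ is preserved by the flow (as noted in the introduction, $\Delta$ commutes with the $\R/2\pi\Z\times O(d)$-action and \eqref[NLS] is gauge invariant). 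Thus the only thing to track is the sign condition on $K$.

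For the $\scK_{G,\omega}^{+}$ case, suppose toward a contradiction that $u_0 \in \scK_{G,\omega}^{+}$ but there is a first time $t_0$ (say $t_0 > 0$; the negative direction is symmetric) at which $K(u(t_0)) < 0$. Since $t \mapsto \norm[\nabla u(t)]_{L^2}$ and $t \mapsto \norm[u(t)]_{L^{p+1}}$ are continuous on the existence interval (by local well-posedness in $H^1$ and Sobolev embedding), $t \mapsto K(u(t))$ is continuous, so there is a time $t_1 \in (0,t_0]$ with $K(u(t_1)) = 0$. Now I claim $u(t_1) \neq 0$: indeed $u(t_1) \in \scK_{G,\omega}^{+}$-type regularity with $S_\omega(u(t_1)) < l_\omega^G$, and $S_\omega(0) = 0$ while $l_\omega^G > 0$ (the latter is part of the variational analysis — $l_\omega^G \ge l_\omega = S_\omega(Q_\omega) > 0$, or can be extracted from \lemref[lem2.2] together with a Gagliardo--Nirenberg lower bound), so $u(t_1)$ cannot be the zero function. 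But then $u(t_1) \in H_G^1 \setminus \{0\}$ with $K(u(t_1)) = 0$, so by definition of the restricted minimizing problem $S_\omega(u(t_1)) \ge l_\omega^G$, contradicting $S_\omega(u(t_1)) = S_\omega(u_0) < l_\omega^G$. Hence $K(u(t)) \ge 0$ for all $t$, i.e. $u(t) \in \scK_{G,\omega}^{+}$.

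For the $\scK_{G,\omega}^{-}$ case, the same continuity argument applies: if $u_0 \in \scK_{G,\omega}^{-}$ and $K(u(t))$ were to reach $0$ at some first time $t_1$, then $u(t_1) \neq 0$ (same reasoning, since $S_\omega(u(t_1)) < l_\omega^G$) would give $S_\omega(u(t_1)) \ge l_\omega^G$ via the minimizing problem, again contradicting conservation of $S_\omega$. So $K(u(t)) < 0$ persists and $u(t) \in \scK_{G,\omega}^{-}$ throughout the existence interval. The one point deserving care — the main (mild) obstacle — is justifying that $u(t_1) \neq 0$, i.e. that the action stays strictly below a strictly positive threshold prevents the solution from vanishing; this needs $l_\omega^G > 0$, which is where one invokes the variational lower bound rather than a purely soft argument. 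Everything else is conservation laws plus continuity of $K$ along the $H^1$-flow and the definition of $l_\omega^G$.
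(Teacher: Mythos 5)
Your overall strategy is exactly the paper's: conserve $S_{\omega}$, use continuity of $t\mapsto K(u(t))$ to produce a time $t_1$ with $K(u(t_1))=0$, and then contradict the definition of $l_{\omega}^{G}$. There is, however, one genuine flaw, and it sits precisely at the point you yourself flag as the delicate one: your justification that $u(t_1)\neq 0$. You argue that $S_{\omega}(u(t_1))<l_{\omega}^{G}$ together with $l_{\omega}^{G}>0$ rules out $u(t_1)=0$. This is backwards: the zero function has $S_{\omega}(0)=0<l_{\omega}^{G}$, so lying strictly below a positive action threshold is perfectly compatible with being the zero function. Positivity of $l_{\omega}^{G}$ is what makes the sets $\scK_{G,\omega}^{\pm}$ nontrivial, but it gives no lower bound on $S_{\omega}(u(t_1))$ and hence no non-vanishing. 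Note also that in the $+$ case the hypothesis does not even force $u_0\neq 0$, since $0\in\scK_{G,\omega}^{+}$.

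The correct way to close this, and what the paper does, is to invoke uniqueness of the Cauchy problem rather than the variational threshold: if $u(t_1)=0$, then $u\equiv 0$ on the whole existence interval, which contradicts the existence of a time at which $K(u(\cdot))<0$ (the time $t_0$ in your $+$ argument, or the initial time in the $-$ argument, since $K(0)=0$). Alternatively, in the $+$ case you could treat $u_0=0$ trivially and, for $u_0\neq 0$, use \lemref[lem2.2] to get $S_{\omega}(u_0)>0=S_{\omega}(0)$, which conservation then propagates to $t_1$; but some such additional ingredient (uniqueness or a strict positive lower bound on the conserved action) is required, and the inequality $S_{\omega}(u(t_1))<l_{\omega}^{G}$ alone does not supply it. With that step repaired, the rest of your argument is correct and coincides with the paper's proof.
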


\begin{proof}
Let $u_0 \in \scK_{G,\omega}^{+}$. Since the energy and the mass are conserved and the solution belongs to $H_{G}^1$, we have $u(t)\in \scK_{G,\omega}^{+} \cup \scK_{G,\omega}^{-}$ for all existence time $t$. We assume that there exists $t_{1}>0$ such that $u(t_{1})\in \scK_{G,\omega}^{-}$. By the continuity of the solution in $H^1(\R^d)$, there exists $t_{0}\in (0,t_{1})$ such that $K(u(t_{0}))=0$ and $K(u(t))<0$ for $t\in (t_{0},t_{1}]$. By the definition of $l_{\omega}^{G}$, if $u(t_{0})\neq 0$, then we see that
\begin{align*}
l_{\omega}^{G}
>E(u_0)+\frac{\omega}{2} M(u_0)
=E(u(t_{0}))+\frac{\omega}{2} M(u(t_{0}))
\geq l_{\omega}^{G}.
\end{align*}
This is a contradiction. Thus, $u(t_{0})=0$. By the uniqueness of the solution, $u=0$ for all time. However, this contradicts $u(t_{1})\in \scK_{G,\omega}^{-}$. Thus, we see that $u(t)\in \scK_{G,\omega}^{+}$ for all $t$. The second statement follows from the same argument.
\end{proof}

\begin{remark}
By Lemmas \ref{lem2.2} and \ref{lem2.3}, we obtain the global existence of the solution in $\scK_{G,\omega}^{+}.$
\end{remark}

\begin{lemma} \label{lem2.4}
Let $\varphi \in H_{G}^1$ satisfy $S_{\omega}(\varphi) < l_{\omega}^{G}$. Then, one of the following holds. 
\begin{equation}
K(\varphi) \geq \min \{  4(l_{\omega}^{G} - S_{\omega}(\varphi))/d ,\delta \norm[\nabla \varphi]_{L^2}^2 \}, \text{ or } K(\varphi) \leq -4(l_{\omega}^{G} - S_{\omega}(\varphi))/d,
\end{equation}
for some $\delta>0$. 
\end{lemma}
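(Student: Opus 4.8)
The plan is to perform a standard convexity/scaling analysis of the one-parameter family $\lambda \mapsto S_{\omega}(\varphi^{\lambda})$, where $\varphi^{\lambda}(x) = e^{\lambda}\varphi(e^{\frac{2}{d}\lambda}x)$, exactly the flow used to define $K$. The key structural fact is that $K(\varphi) = \partial_{\lambda} S_{\omega}(\varphi^{\lambda})|_{\lambda=0}$, and more generally $\partial_{\lambda} S_{\omega}(\varphi^{\lambda}) = K(\varphi^{\lambda})$. Writing out the two terms, $\norm[\nabla \varphi^{\lambda}]_{L^2}^2 = e^{2\lambda}\norm[\nabla \varphi]_{L^2}^2$ grows exponentially while the $L^{p+1}$ term scales like $e^{(\frac{2}{d}(p+1) - 2)\lambda}\cdot(\text{power})$; the precise point is that the mass-supercritical condition $p > 1 + 4/d$ makes the gradient term the dominant growing term, so $\lambda \mapsto S_{\omega}(\varphi^{\lambda})$ is (as a function) increasing-then-decreasing, with a unique maximum, and $K(\varphi^{\lambda})$ is positive for small $\lambda$ and negative for large $\lambda$. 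Note also the $G$-invariance is preserved under this scaling, so $\varphi^{\lambda} \in H_G^1$ for all $\lambda$, which is what lets us compare with $l_{\omega}^{G}$.

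First I would dispose of the easy case. If $K(\varphi) \le 0$, I claim $K(\varphi) \le -4(l_{\omega}^G - S_{\omega}(\varphi))/d$. Indeed, by the scaling behavior above there is a (unique, or at least well-defined) $\lambda_0 \le 0$ with $K(\varphi^{\lambda_0}) = 0$ and $\varphi^{\lambda_0} \ne 0$, hence $S_{\omega}(\varphi^{\lambda_0}) \ge l_{\omega}^{G}$ by definition of the restricted minimizer. Then one uses the elementary calculus identity, valid along this flow,
\begin{equation*}
S_{\omega}(\varphi) - S_{\omega}(\varphi^{\lambda_0}) = -\int_{\lambda_0}^{0} K(\varphi^{\lambda})\,d\lambda,
\end{equation*}
together with an explicit lower bound on $-\partial_{\lambda} K(\varphi^{\lambda})$ (a convexity-in-$\lambda$ estimate: along the flow, $\partial_{\lambda} K(\varphi^{\lambda}) = \frac{4}{d}\norm[\nabla\varphi^{\lambda}]_{L^2}^2 - \frac{(p-1)}{p+1}\big(\tfrac{2}{d}(p+1)-2\big)\norm[\varphi^{\lambda}]_{L^{p+1}}^{p+1}$, which when $K(\varphi^{\lambda})\le 0$ is bounded below by $\frac{4}{d}K(\varphi^{\lambda})$ plus a good sign, ultimately giving $\partial_\lambda K(\varphi^\lambda) \ge \frac{4}{d} K(\varphi^\lambda)$ on the relevant range, or more directly $\partial_\lambda\big(K(\varphi^\lambda)\big)\geq \tfrac{4}{d}\,\partial_\lambda\big(S_\omega(\varphi^\lambda)\big)$ type bound). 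Feeding this differential inequality into the integral identity yields $l_{\omega}^{G} - S_{\omega}(\varphi) \le -\frac{d}{4}K(\varphi)$, which is the desired estimate. The same scheme handles $K(\varphi) > 0$: now the zero of $K$ along the flow sits at some $\lambda_0 > 0$, and integrating the analogous differential inequality from $0$ to $\lambda_0$ gives either $K(\varphi) \ge 4(l_{\omega}^G - S_{\omega}(\varphi))/d$ directly, or, if $\lambda_0$ is large (i.e. $\varphi$ is already "far" from the constraint manifold in scale), a coercivity estimate $K(\varphi) \gtrsim \norm[\nabla\varphi]_{L^2}^2$ takes over. The dichotomy in the statement — the $\min$ of the two lower bounds — is exactly the bookkeeping of these two regimes (small vs. large $\lambda_0$).

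The main obstacle I expect is getting the constant exactly right, namely isolating the factor $4/d$ rather than just some positive constant. This comes down to the sharp inequality $\partial_{\lambda}S_{\omega}(\varphi^{\lambda})\big|_{\text{at the gradient-dominated part}}$ versus the scaling exponent of the nonlinearity; one wants the cleanest comparison, something like: for the function $f(\lambda) := S_{\omega}(\varphi^{\lambda})$ one has $f'(\lambda) = \frac{2}{d}e^{2\lambda}\norm[\nabla\varphi]_{L^2}^2 - \frac{p-1}{p+1}e^{(\frac{2}{d}(p+1)-2)\lambda}\norm[\varphi]_{L^{p+1}}^{p+1} = K(\varphi^\lambda)$, and then $f''$ or the relation between $f'$ and $e^{2\lambda}$ must be exploited to turn "the area under $K$" into a multiple of $K(\varphi)$ itself. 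The Lemma \ref{lem2.2} bound (which gives two-sided control of $\frac12\norm[\nabla\varphi]_{L^2}^2 + \frac{\omega}{2}M(\varphi)$ by $S_\omega$ when $K\ge 0$) is likely the tool that converts the $\norm[\nabla\varphi]_{L^2}^2$-coercivity alternative into its stated form, and I would also want the elementary fact that when $K(\varphi)\ge 0$ and $S_\omega(\varphi)<l_\omega^G$ we must in fact have $K(\varphi)>0$ unless $\varphi=0$ (else $\varphi$ would be admissible for $l_\omega^G$, forcing $S_\omega(\varphi)\ge l_\omega^G$), which ensures the two cases genuinely cover everything. The rest is routine one-variable calculus.
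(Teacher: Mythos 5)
Your plan is essentially the paper's own proof: set $s(\lambda):=S_{\omega}(\varphi^{\lambda})$, locate the zero $\lambda_0$ of $s'(\lambda)=K(\varphi^{\lambda})$ along the scaling flow, use $S_{\omega}(\varphi^{\lambda_0})\geq l_{\omega}^{G}$ at that zero, integrate a second-order differential inequality with the sharp constant $4/d$, and let the $\norm[\nabla \varphi]_{L^2}^2$-coercivity alternative absorb the regime where the flow must travel far; that is exactly the paper's structure, including the two-case bookkeeping behind the $\min$. Two sign slips should be corrected when you write it out. First, the calculus identity is $S_{\omega}(\varphi)-S_{\omega}(\varphi^{\lambda_0})=\int_{\lambda_0}^{0}K(\varphi^{\lambda})\,d\lambda$ with no minus sign. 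Second, the key convexity inequality runs the other way from what you wrote: one has $s''(\lambda)\leq \tfrac{4}{d}\,s'(\lambda)$ for \emph{all} $\lambda$ (this is precisely where $p-1>4/d$ enters), and integrating this on $[\lambda_0,0]$ with $s'(\lambda_0)=0$ gives $K(\varphi)\leq \tfrac{4}{d}\bigl(S_{\omega}(\varphi)-S_{\omega}(\varphi^{\lambda_0})\bigr)\leq -\tfrac{4}{d}\bigl(l_{\omega}^{G}-S_{\omega}(\varphi)\bigr)$ in the $K<0$ case; your stated direction $\partial_{\lambda}K(\varphi^{\lambda})\geq \tfrac{4}{d}K(\varphi^{\lambda})$ would produce the reverse and the argument would not close. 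In the $K>0$ case the paper instead exploits $s''\leq -\tfrac{4}{d}s'$, valid for $\lambda$ beyond the unique root $\lambda_1$ of $s''+\tfrac{4}{d}s'$, splitting on the sign of $\lambda_1$ ($\lambda_1\geq 0$ gives the coercivity bound with $\delta=2(p-1-4/d)/\{d(p-1+4/d)\}$, $\lambda_1<0$ gives the $4(l_{\omega}^{G}-S_{\omega}(\varphi))/d$ bound), which is the precise version of your ``small vs.\ large $\lambda_0$'' dichotomy. With those corrections your outline matches the paper's proof.
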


\begin{proof}
Since the statement holds if $\varphi = 0$, we may assume that $\varphi \neq 0$. Let $s(\la):=S_{\omega}(\varphi^\lambda)$, where $\varphi^\lambda(x)=e^{\lambda} \varphi(e^{\frac{2}{d}\lambda})$. Then, $s(0)=S_{\omega}(\varphi)$ and $s'(0)=K(\varphi)$. By direct calculations, 
\begin{align}
s(\lambda)
&= \frac{1}{2} e^{\frac{4}{d}\lambda} \norm[\nabla \varphi]_{L^2}^2 +\frac{\omega}{2} \norm[\varphi]_{L^2}^2 -\frac{1}{p+1} e^{(p-1)\lambda} \norm[\varphi]_{L^{p+1}}^{p+1},
\\
s'(\lambda)
&= \frac{2}{d} e^{\frac{4}{d}\lambda} \norm[\nabla \varphi]_{L^2}^2 -\frac{p-1}{p+1} e^{(p-1)\lambda} \norm[\varphi]_{L^{p+1}}^{p+1},
\\
s''(\lambda)
&= \frac{8}{d^2} e^{\frac{4}{d}\lambda} \norm[\nabla \varphi]_{L^2}^2 -\frac{(p-1)^2}{p+1} e^{(p-1)\lambda} \norm[\varphi]_{L^{p+1}}^{p+1}.
\end{align}
Thus, we have $s'' \leq 4s'/d$. First, we consider the case of $K<0$. Let $\la_0$ be defined by
\begin{equation}
\la_0 := \l( p-1-\frac{4}{d} \r)^{-1} \log \l( \frac{ \frac{2}{d}  \norm[\nabla \varphi]_{L^2}^2}{\frac{p-1}{p+1} \norm[\varphi]_{L^{p+1}}^{p+1}}\r).
\end{equation}
Then, $s'(\la_0)=0$. Moreover, $\la_0 < 0$ since $K<0$. Integrating $s'' \leq 4s'/d$ on $[\la_0, 0]$, we obtain 
\begin{equation}
s'(0) - s'(\la_0) \leq \frac{4}{d} (s(0)-s(\la_0)). 
\end{equation}
This completes the proof in the case of $K<0$. Next, we consider the case of $K >0$.  We define 
\begin{equation}
\la_1 := \l( p-1-\frac{4}{d} \r)^{-1} \log \l( \frac{ \frac{16}{d^2}  \norm[\nabla \varphi]_{L^2}^2}{\frac{p-1}{p+1} \l(p-1+\frac{4}{d} \r) \norm[\varphi]_{L^{p+1}}^{p+1}}\r).
\end{equation}
Then, $s''(\la_1)+4s'(\la_1)/d=0$. If $\la_1 \geq 0$, then, by the definition of $\la_1$, we obtain
\begin{equation}
K(\varphi) \geq \l(p-1+\frac{4}{d} \r)^{-1} \frac{2}{d} \l(p-1-\frac{4}{d}\r) \norm[\nabla \varphi]_{L^2}^2. 
\end{equation}
Letting $\delta:=2(p-1-4/d)/\{d(p-1+4/d )\}$, we obtain $K(\varphi) \leq \delta \norm[\nabla \varphi]_{L^2}^2$.
If $\la_1 < 0$, then $s''(\la) < -4s'(\la)/d$ for $\la \in [0, \la_0]$, where we note that $\la_0>0$ since $K \geq 0$ and that $s''(\la) + 4s'(\la)/d<0$ for all $\la > \la_1$.  Integrating the inequality $s''(\la) < -4s'(\la)/d$ on $[0,\la_0]$, this completes the proof. 
\end{proof}

%%%%%%%%%%%%%%%%%%%%%%%%%%%%%%%%%%%%%%%%%%%%%%%%%%%%%%%%

%\subsection{Blow-up}

\begin{proof}[Proof of {\thmref[thm1.1] (2)}]
By Lemmas \ref{lem2.3} and \ref{lem2.4}, if $u_0 \in \scK_{G,\omega}^{-}$, then the solution $u$ satisfies $K(u(t))< -4(l_{\omega}^{G}-S_{\omega}(u_0))/d<0$ for all existence time $t$. Therefore, \thmref[thm1.1] (2) follows directly from Theorem 2.1 in \cite{DWZ13pre}. 
\end{proof}

%%%%%%%%%%%%%%%%%%%%%%%%%%%%%%%%%%%%%%%%%%%%%%%%%%%%%%%%%%%%%%%%%%%%%%%%%%%%%%%%%%%%%%%%%%%%%%%%%%%%%%%%%%%%%%

\section{Preliminaries for the Proof of the Scattering Result}
\label{sec3}

In this subsection, we introduce some basic facts used to prove the scattering result. Their proofs also can be found in \cite{FXC11}.
Let 
\begin{align} \label{3.0}
\begin{array}{lll}
\alpha:=\frac{2(p-1)(p+1)}{4-(d-2)(p-1)},
&
\beta:=\frac{2(p-1)(p+1)}{d(p-1)^2+(d-2)(p-1)-4},
& 
\\
 & & 
\\
q:=\frac{4(p+1)}{d(p-1)},
&
r:=p+1,
&
s:=\frac{d}{2}-\frac{2}{p-1}. 
\end{array}
\end{align}
Moreover, let $\beta'$ and $r'$ denote the H\"{o}lder exponents of the exponent $\beta$ and $r$, respectively.
%$(q,r)$ and $(\gamma,\gamma)$ are $0$-admissible pairs and $(\alpha,r)$ is $s$-admissible pair, where $s\in (0,\min\{ 1,d/2\})$ since $1+4/d<p<1+4/(d-2)$. $(\alpha,r)$ and $(\beta,r)$ is non-admissible pair set.
%We note that $pr'=r$ and $pb'=a$ where $\cdot'$ denotes the H\"{o}lder exponent of the exponent. $(q,r)$ and $(\ga,\ga)$ are $0$-admissible pairs. $(a,r)$ and $(b,r)$ is non-admissible pair set. $a>\ga$ holds since $p$ is larger than $1+4/d$. 
\begin{lemma}[Strichartz estimates]
The following estimates are vaild.
\begin{align}
\label{3.1}
&\norm[e^{it\Delta} \varphi]_{L^q(\R:L^r)}
\cleq \norm[\varphi]_{L^2},
\\
\label{3.2}
&\norm[e^{it\Delta} \varphi]_{L^\alpha(\R:L^r)} \cleq \norm[\varphi]_{\dot{H}^s},
\\
\label{3.3}
&\norm[\int_{0}^{t} e^{i(t-t')\Delta} f(t') dt' ]_{L^\alpha(I:L^r)} \cleq \norm[f]_{L^{\beta'}(I:L^{r'})},
\end{align}
where $I$ is a time interval and the implicit constant is independent of $I$.  
\end{lemma}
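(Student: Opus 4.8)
The plan is to derive all three estimates from the standard space-time estimates for the free Schrödinger group, combined with Sobolev embedding and a $TT^*$ argument to reach the non-admissible exponents. First I would recall the fundamental dispersive bound $\norm[e^{it\Delta} \varphi]_{L^\infty} \cleq |t|^{-d/2} \norm[\varphi]_{L^1}$ together with the $L^2$-conservation $\norm[e^{it\Delta}\varphi]_{L^2}=\norm[\varphi]_{L^2}$, and interpolate to get $\norm[e^{it\Delta}\varphi]_{L^{\rho'}} \cleq |t|^{-d(1/2 - 1/\rho)} \norm[\varphi]_{L^{\rho'}}$ for $2\le \rho \le \infty$. The inequality \eqref[3.1] is then just the classical admissible Strichartz estimate: one checks that the pair $(q,r)=\bigl(\tfrac{4(p+1)}{d(p-1)}, p+1\bigr)$ is Schrödinger-admissible, i.e. $\tfrac{2}{q}=d\bigl(\tfrac12-\tfrac1r\bigr)$ and $q\ge 2$ (which holds precisely because $p<1+4/(d-2)$), so \eqref[3.1] follows from the Keel–Tao endpoint theorem or the classical Strichartz/Ginibre–Velo estimate.

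Next, for \eqref[3.2] I would observe that $(\alpha, r)$ is \emph{not} admissible — it is the exponent pair appearing in \cite{FXC11}, adapted to the $\dot H^s$-critical scaling with $s=\tfrac d2 - \tfrac{2}{p-1}$. The cleanest route is to combine the admissible estimate with a fractional Sobolev embedding in the time–space variables, or equivalently to use the fact that $e^{it\Delta}$ maps $\dot H^s$ into a suitable space-time Lebesgue space by Sobolev embedding in $x$ and the dispersive decay in $t$: one picks an admissible pair $(\tilde q, \tilde r)$ at regularity level $s$, applies \eqref[3.1]-type bounds to $|\nabla|^s \varphi$, and then Sobolev-embeds $\dot W^{s,\tilde r}_x \hookrightarrow L^r_x$ after adjusting the time exponent — checking that the scaling is consistent forces exactly the value of $\alpha$ given in \eqref[3.0]. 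Alternatively, one may cite the non-admissible Strichartz estimate directly from \cite{FXC11} or \cite{Caz03}, since the excerpt already signals that these facts "can be found in \cite{FXC11}."

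For the inhomogeneous estimate \eqref[3.3] I would use the Christ–Kiselev lemma together with a duality/$TT^*$ argument. Writing the Duhamel term as $\int_0^t e^{i(t-t')\Delta} f(t')\,dt'$, one first proves the "untruncated" bilinear bound $\bigl\|\int_{\R} e^{-it'\Delta} f(t')\,dt'\bigr\|$-type estimate by pairing against an $L^{\alpha'}_t L^{r'}_x$ test function and invoking the homogeneous estimates \eqref[3.1]–\eqref[3.2] for both $e^{it\Delta}$ and its adjoint; the key point is that $\tfrac{1}{\alpha}+\tfrac{1}{\beta}<1$ (so the two admissible/non-admissible legs "fit together" and Christ–Kiselev applies), which is again exactly what the definitions of $\alpha$ and $\beta$ in \eqref[3.0] guarantee. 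The truncation in the $t'$-integral is then removed at the cost of an absolute constant via Christ–Kiselev, giving the stated bound with a constant independent of $I$.

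The main obstacle is purely bookkeeping: verifying that the exponents $\alpha,\beta,q,r,s$ defined in \eqref[3.0] satisfy all the scaling identities and strict inequalities needed (admissibility of $(q,r)$, the $\dot H^s$-scaling relation for $(\alpha,r)$, and $\tfrac1\alpha+\tfrac1\beta<1$ for Christ–Kiselev), using only the hypothesis $1+4/d<p<1+4/(d-2)$. None of this is deep, but it is the part most likely to hide an algebraic slip; in practice I would simply note these are the standard (non-admissible) Strichartz estimates established in \cite{FXC11} and refer the reader there, reproducing only the exponent checks.
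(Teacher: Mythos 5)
Your proposal is correct and follows essentially the same route as the paper, which simply records that \eqref{3.1} is the standard admissible Strichartz estimate, that \eqref{3.2} follows from Sobolev embedding combined with the Strichartz estimate, and that \eqref{3.3} is the non-admissible inhomogeneous estimate cited from \cite[Lemma 2.1]{CW92} and \cite[Proposition 2.4.1]{Caz03}. The only cosmetic difference is your Christ--Kiselev detour for \eqref{3.3}, which is not needed in the cited proofs (the dispersive bound reduces the retarded operator to a positive convolution kernel in time, so Hardy--Littlewood--Sobolev applies directly); your fallback of citing \cite{FXC11} and \cite{Caz03} is exactly what the paper does.
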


\begin{proof}
The first estimate is a standard Strichartz estimate. The second one is obtained by the Sobolev inequality and the Strichartz estimate. The third one is a non-admissible Strichartz estimate (see \cite[Lemma 2.1]{CW92} or \cite[Proposition 2.4.1]{Caz03}). 
\end{proof}

\begin{proposition} \label{prop3.2}
Let $u_0 \in H^1(\R^d)$ and $u$ be the solution to \eqref[NLS] with the initial data $u_0$. If the solution $u$ is positively global and $u \in L^\alpha((0,\infty):L^r(\R^d))$, then the solution scatters in the positive time direction. Moreover, the same statement holds in the negative case. 
\end{proposition}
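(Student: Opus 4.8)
The plan is to run the standard Strichartz-based scattering argument. The hypothesis gives a positively global solution $u$ with $u \in L^\alpha((0,\infty):L^r(\R^d))$, and the goal is to produce an asymptotic state $\varphi_+ \in H^1(\R^d)$ with $\norm[u(t) - e^{it\Delta}\varphi_+]_{H^1} \to 0$ as $t \to +\infty$. First I would write the Duhamel formula $u(t) = e^{it\Delta} u_0 + i\int_0^t e^{i(t-t')\Delta}(|u|^{p-1}u)(t')\,dt'$ and define the candidate scattering state by
\begin{equation*}
\varphi_+ := u_0 + i\int_0^\infty e^{-it'\Delta}(|u|^{p-1}u)(t')\,dt',
\end{equation*}
so that formally $e^{it\Delta}\varphi_+ - u(t) = -i\int_t^\infty e^{i(t-t')\Delta}(|u|^{p-1}u)(t')\,dt'$. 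Everything then reduces to showing that this tail integral is well-defined in $H^1$ and tends to $0$.

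The key step is to upgrade the single integrability assumption $u \in L^\alpha((0,\infty):L^r)$ to full Strichartz control, in particular $\nabla u \in L^q((0,\infty):L^r) \cap L^\infty((0,\infty):L^2)$. I would argue as in \cite{FXC11}: partition $(0,\infty)$ into finitely many subintervals $I_j$ on which $\norm[u]_{L^\alpha(I_j:L^r)}$ is small, and on each such interval apply the inhomogeneous Strichartz estimate \eqref[3.3] together with \eqref[3.1] to the equation for $\nabla u$, estimating the nonlinear term $\nabla(|u|^{p-1}u)$ in $L^{\beta'}(I_j:L^{r'})$ by $\norm[u]_{L^\alpha(I_j:L^r)}^{p-1}\norm[\nabla u]_{L^q(I_j:L^r)}$ via Hölder (this is precisely the computation for which the exponents $\alpha,\beta,q,r,s$ in \eqref[3.0] were chosen, since $(q,r)$ is admissible and $(\beta,r)$ is the non-admissible pair dual to the Strichartz-gain of $\alpha$). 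Summing the resulting bounds over the finitely many $I_j$ gives $\nabla u \in L^q((0,\infty):L^r)$, and then mass/energy conservation (or a direct Strichartz bound) gives $\nabla u \in L^\infty((0,\infty):L^2)$; similarly $u \in L^q((0,\infty):L^r)$.

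With these global bounds in hand, the convergence is routine: applying \eqref[3.3] and \eqref[3.1] to $\int_t^\infty e^{i(t-t')\Delta}\nabla(|u|^{p-1}u)\,dt'$ over the tail interval $(t,\infty)$ and using Hölder as above bounds $\norm[e^{it\Delta}\varphi_+ - u(t)]_{H^1}$ by $\norm[u]_{L^\alpha((t,\infty):L^r)}^{p-1}\big(\norm[u]_{L^q((t,\infty):L^r)} + \norm[\nabla u]_{L^q((t,\infty):L^r)}\big)$, which tends to $0$ as $t \to \infty$ by dominated convergence since all these norms are finite over $(0,\infty)$. The same computation with $t=0$ shows $\varphi_+ \in H^1$. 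The negative-time statement is identical after time reversal.

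The main obstacle is the bootstrap in the second step: one must be careful that the non-admissible estimate \eqref[3.3] is applicable — its constant is independent of the interval $I$, which is what makes the finite-partition argument work — and that the Hölder exponents genuinely close, i.e. that $\nabla u$ appears linearly in the estimate of the nonlinearity with the remaining $p-1$ factors absorbed into the small $L^\alpha L^r$ norm. This is exactly the delicate exponent bookkeeping underlying \eqref[3.0], and it is the only place where the mass-supercritical, energy-subcritical range $1+4/d < p < 1+4/(d-2)$ is used; everything else is soft.
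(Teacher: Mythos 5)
Your overall architecture is exactly what the paper does (the paper's proof is a two-line citation: \cite[Proposition 2.3]{CW92} for upgrading the single $L^\alpha L^r$ bound to all admissible Strichartz norms at $H^1$ regularity, then the standard Cauchy-in-time argument of \cite[Theorem 7.8.1]{Caz03}), and the skeleton --- Duhamel, finite partition into intervals of small $L^\alpha L^r$ norm, bootstrap, tail estimate --- is correct. However, the central H\"older step as you wrote it does not close. The claimed bound
\[ \Vert \nabla(|u|^{p-1}u)\Vert_{L^{\beta'}(I_j:L^{r'})} \lesssim \Vert u\Vert_{L^\alpha(I_j:L^r)}^{p-1}\,\Vert \nabla u\Vert_{L^q(I_j:L^r)} \]
requires $\tfrac{1}{\beta'}=\tfrac{p-1}{\alpha}+\tfrac{1}{q}$, which is false: from \eqref[3.0] one checks that $\tfrac{1}{\beta'}=\tfrac{p}{\alpha}$, so the space $L^{\beta'}L^{r'}$ is designed for $\Vert |u|^{p-1}u\Vert_{L^{\beta'}L^{r'}}\leq \Vert u\Vert_{L^\alpha L^r}^{p}$, i.e.\ for closing the loop in $L^\alpha L^r$ at $\dot H^s$ regularity with no derivatives (that is the role of \eqref[3.3] in the small-data and perturbation lemmas). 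Concretely, for $d=3$, $p=3$ one has $\alpha=8$, $q=\beta'=8/3$, and $\tfrac{p-1}{\alpha}+\tfrac{1}{q}=\tfrac{5}{8}\neq\tfrac{3}{8}=\tfrac{1}{\beta'}$. If you insist on the exponent $\beta'$, H\"older forces the gradient factor into $L^\alpha(I_j:L^r)$, and that norm is not reachable from $H^1$ data: the homogeneous estimate \eqref[3.2] for the pair $(\alpha,r)$ costs $\dot H^s$, hence $\dot H^{1+s}$ for $\nabla u$.

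The repair is small but necessary. The identity that does hold is $\tfrac{p-1}{\alpha}+\tfrac{1}{q}=\tfrac{1}{q'}$, where $q'$ is the \emph{dual admissible} exponent, so the correct estimate is
\[ \Vert\, |u|^{p-1}\nabla u\Vert_{L^{q'}(I_j:L^{r'})}\leq \Vert u\Vert_{L^\alpha(I_j:L^r)}^{p-1}\,\Vert \nabla u\Vert_{L^q(I_j:L^r)}, \]
and it must be paired with the standard admissible inhomogeneous Strichartz estimate (mapping $L^{q'}L^{r'}$ into $L^qL^r$ and into $L^\infty L^2$), not with \eqref[3.3]. With that substitution the bootstrap closes on each $I_j$ in the form $\Vert \nabla u\Vert_{L^q(I_j:L^r)}\leq C\Vert u(t_j)\Vert_{H^1}+C\eps^{p-1}\Vert \nabla u\Vert_{L^q(I_j:L^r)}$, summing over the finitely many intervals gives the global bound, and your tail estimate for $e^{it\Delta}\varphi_+-u(t)$ then goes through verbatim with $q'$ in place of $\beta'$. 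So the gap is a genuine exponent mismatch located precisely in the step you flagged as the delicate one, but it is repairable without changing the architecture; the corrected argument is exactly the content of \cite[Proposition 2.3]{CW92} that the paper invokes.
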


\begin{proof}
Proposition 2.3 in \cite{CW92} implies $u$ belongs to $L^{\eta}((0,\infty):L^{\rho}(\R^d))$ for any admissible pair $(\eta,\rho)$, and then a standard argument gives us the fact that $u$ scatters in the positive time directions (see the argument in Theorem 7.8.1 in \cite{Caz03}). 
\end{proof}

\begin{proposition}
\label{SDS}
There exists $\eps_{sd}>0$ such that if $u_0 \in H^1(\R^d)$ and $\norm[e^{it\Delta} u_0]_{L^{\alpha}((0,\infty):L^r)} \leq \eps_{sd}$, then the solution $u$ of \eqref[NLS] with the initial data $u_0$ is positively global and we have
\begin{equation}
 \norm[u]_{L^\alpha((0,\infty):L^r)}  \cleq \eps_{sd}. 
\end{equation}
In particular, if $\norm[u_0]_{H^1} \leq \eps_{sd}$, then the solution $u$  is global and we have
\begin{equation}
 \norm[u]_{L^\alpha(\R:L^r)}  \cleq \norm[u_0]_{H^1} . 
\end{equation}
\end{proposition}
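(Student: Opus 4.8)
The plan is to prove \propref[SDS] by a standard contraction-mapping (small-data) argument built on the Strichartz estimates \eqref[3.1]--\eqref[3.3] together with the Duhamel formulation of \eqref[NLS]. First I would set up the solution map
\[
\Phi(u)(t):=e^{it\Delta}u_0 + i\int_0^t e^{i(t-t')\Delta}\bigl(|u|^{p-1}u\bigr)(t')\,dt',
\]
and seek a fixed point in a ball of the space $X:=L^\alpha((0,\infty):L^r)\cap L^q((0,\infty):L^r)$, intersected (for the $H^1$ statement) with $L^q((0,\infty):W^{1,r})$ so that one controls a full derivative. The role of \eqref[3.3] is exactly that it is the non-admissible Strichartz estimate allowing the $L^\alpha L^r$ norm to be closed off the inhomogeneous term; the point is that the exponents in \eqref[3.0] are chosen so that, by H\"older in both time and space, $\norm[|u|^{p-1}u]_{L^{\beta'}(I:L^{r'})} \cleq \norm[u]_{L^\alpha(I:L^r)}^{p}$, since $p/\alpha$ together with the spatial bookkeeping $r' = (p+1)/p$, $pr'=r$ matches $1/\beta'$. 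This is the computation I would not grind through but would state as the key nonlinear estimate.

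The main argument then runs as follows. Assume $\norm[e^{it\Delta}u_0]_{L^\alpha((0,\infty):L^r)}\le\eps_{sd}$. On the ball $B_{2C\eps_{sd}} = \{u: \norm[u]_{L^\alpha((0,\infty):L^r)}\le 2C\eps_{sd}\}$, estimate
\[
\norm[\Phi(u)]_{L^\alpha L^r}\le \norm[e^{it\Delta}u_0]_{L^\alpha L^r} + C\norm[u]_{L^\alpha L^r}^{p}\le \eps_{sd} + C(2C\eps_{sd})^{p},
\]
which is $\le 2C\eps_{sd}$ once $\eps_{sd}$ is chosen small (using $p>1$), and similarly $\norm[\Phi(u)-\Phi(v)]_{L^\alpha L^r}\le \tfrac12\norm[u-v]_{L^\alpha L^r}$ via the pointwise bound $\bigl||u|^{p-1}u-|v|^{p-1}v\bigr|\cleq (|u|^{p-1}+|v|^{p-1})|u-v|$ and H\"older. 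Banach fixed point gives a unique solution $u$ with $\norm[u]_{L^\alpha((0,\infty):L^r)}\cleq\eps_{sd}$. To see $u$ is genuinely the (positively global) $H^1$ solution and not just an $L^\alpha L^r$ object, I would run the same contraction in the $L^q((0,\infty):W^{1,r})$-norm: \eqref[3.1] controls the linear part by $\norm[u_0]_{L^2}$ (and the derivative by $\norm[\nabla u_0]_{L^2}$), while the inhomogeneous admissible Strichartz estimate (dual to \eqref[3.1], which is implicit in the standard Strichartz package) controls $\int_0^t e^{i(t-t')\Delta}\nabla(|u|^{p-1}u)$ by $\norm[u]_{L^\alpha L^r}^{p-1}\norm[\nabla u]_{L^q L^r}$, a quantity made small by the already-established smallness of $\norm[u]_{L^\alpha L^r}$. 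Finiteness of the global $H^1$-Strichartz norms forces the $H^1$ local solution to extend to all positive times and to coincide with $u$. For the second statement, if $\norm[u_0]_{H^1}\le\eps_{sd}$ then \eqref[3.2] gives $\norm[e^{it\Delta}u_0]_{L^\alpha(\R:L^r)}\cleq\norm[u_0]_{\dot H^s}\cleq\norm[u_0]_{H^1}\le\eps_{sd}$ (note $0<s<1$ since $1+4/d<p<1+4/(d-2)$, so $\dot H^s$ interpolates inside $H^1$), hence the hypothesis of the first part holds in both time directions, and one concludes $\norm[u]_{L^\alpha(\R:L^r)}\cleq\norm[u_0]_{H^1}$.

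The step I expect to be the main obstacle is not any single estimate but the careful matching of exponents: one must verify that the triple $(\alpha,\beta',r')$ from \eqref[3.0] actually makes the H\"older inequality $\norm[|u|^{p-1}u]_{L^{\beta'}(I:L^{r'})}\le \norm[u]_{L^\alpha(I:L^r)}^{p}$ hold with a constant independent of $I$ (the time exponents must satisfy $p/\alpha=1/\beta'$ and the space exponents $p/r=1/r'$), and that the derivative version closes in an admissible-pair norm without losing the smallness. Everything else — the contraction estimates, the continuity argument promoting the fixed point to the $H^1$ solution, and the invocation of \eqref[3.2] in the $H^1$-small case — is routine once this bookkeeping is in place. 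I would also remark that the scattering conclusion itself is not needed here; \propref[prop3.2] already upgrades $u\in L^\alpha L^r$ to scattering, so \propref[SDS] only needs to deliver the $L^\alpha L^r$ bound and positive globality.
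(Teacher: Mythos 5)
Your proof is correct and is essentially the paper's argument: the paper simply cites \cite[Proposition 2.4]{CW92} for the first statement and then applies \eqref[3.2] to get $\norm[e^{it\Delta}u_0]_{L^\alpha(\R:L^r)}\cleq\norm[u_0]_{\dot H^s}\leq\eps_{sd}$ for the second, and your contraction-mapping argument (with the exponent identities $p\beta'=\alpha$, $pr'=r$, which do check out) is precisely the standard proof behind that citation.
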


\begin{proof}
The first statement follows form Proposition 2.4 in \cite{CW92}. 
By \eqref[3.2], we obtain $\norm[e^{it\Delta}u_0]_{L^\alpha(\R:L^r)} \cleq \norm[u_0]_{\dot{H}^s} \leq \eps_{sd}$. Applying  Proposition 2.4 in \cite{CW92}, we obtain the second statement.
\end{proof}

\begin{lemma}
\label{lem3.4}
If $\psi \in H_{G}^1$ satisfies $\norm[\nabla \psi]_{L^2}^2/2+  \omega  M(\psi)/2< l_{\omega}^G$, then there exists a global solution $U_{+}$ to \eqref[NLS] such that $U_{+}(0) \in \scK_{G,\omega}^{+}$ and $\norm[U_{+}(t) - e^{it\Delta}\psi]_{H^1} \to 0$ as $t \to \infty$. Moreover, the same statement holds in the negative case.
\end{lemma}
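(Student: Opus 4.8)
The plan is to construct $U_+$ via the standard "scattering-to-data" / final-data problem: we seek a solution of \eqref[NLS] on a half-line $[T,\infty)$ which behaves like $e^{it\Delta}\psi$ as $t\to\infty$, and then extend it to all of $\R$ using the global well-posedness inside $\scK_{G,\omega}^+$ (Lemmas \ref{lem2.2} and \ref{lem2.3}). Concretely, $U_+$ should solve the integral equation
\begin{equation*}
U_+(t) = e^{it\Delta}\psi - i\int_{\infty}^{t} e^{i(t-t')\Delta}\bigl(|U_+|^{p-1}U_+\bigr)(t')\,dt'.
\end{equation*}
First I would fix a large time $T$ and run a contraction-mapping argument on the space $X_T$ of functions with $\norm[\cdot]_{L^\alpha((T,\infty):L^r)}$ and $\norm[\cdot]_{L^q((T,\infty):L^r)}$ (together with the corresponding norms for $\nabla\cdot$) small. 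Using the Strichartz estimates \eqref[3.1]--\eqref[3.3] and the fact that $\norm[e^{it\Delta}\psi]_{L^\alpha((T,\infty):L^r)}\to 0$ as $T\to\infty$ (which follows from \eqref[3.2] and density, since $\psi\in \dot H^s$), one gets a unique solution $U_+$ on $[T,\infty)$ with $\norm[U_+(t)-e^{it\Delta}\psi]_{H^1}\to 0$ as $t\to\infty$. This is entirely parallel to the small-data theory in \propref[SDS] and to the wave-operator construction in \cite{FXC11}; the $G$-invariance is automatically preserved because $e^{it\Delta}$ and the nonlinearity commute with the $G$-action, so $U_+(t)\in H^1_G$ for all $t$.

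The point that requires care — and the main obstacle — is upgrading this local-in-$[T,\infty)$ solution to a global-in-$\R$ solution lying in $\scK_{G,\omega}^+$. The hypothesis is the "linearized" bound $\frac12\norm[\nabla\psi]_{L^2}^2+\frac{\omega}{2}M(\psi)<l_\omega^G$, not a bound on $S_\omega$ of the data. The resolution is that as $t\to\infty$ the nonlinear term decays: $\norm[U_+(t)]_{L^{p+1}}\to 0$ (since $U_+(t)-e^{it\Delta}\psi\to 0$ in $H^1$ and $\norm[e^{it\Delta}\psi]_{L^{p+1}}\to 0$ by dispersive decay / the Strichartz membership), while $\norm[\nabla U_+(t)]_{L^2}^2\to\norm[\nabla\psi]_{L^2}^2$ and $M(U_+(t))\to M(\psi)$. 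Hence for $t$ large, $K(U_+(t)) = \frac2d\norm[\nabla U_+(t)]_{L^2}^2 - \frac{p-1}{p+1}\norm[U_+(t)]_{L^{p+1}}^{p+1} > 0$ and
\begin{equation*}
S_\omega(U_+(t)) = \frac12\norm[\nabla U_+(t)]_{L^2}^2 + \frac{\omega}{2}M(U_+(t)) - \frac{1}{p+1}\norm[U_+(t)]_{L^{p+1}}^{p+1} \longrightarrow \frac12\norm[\nabla\psi]_{L^2}^2+\frac{\omega}{2}M(\psi) < l_\omega^G.
\end{equation*}
So there is a time $t_0\geq T$ with $U_+(t_0)\in\scK_{G,\omega}^+$. (I should double-check: if it happens that $\psi=0$ then $U_+\equiv 0$ trivially works, so assume $\psi\neq 0$; one also needs $l_\omega^G>0$, which follows from \lemref[lem2.2] and a Sobolev/Gagliardo–Nirenberg lower bound on minimizers, a fact implicitly available from the variational setup.)

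Finally, with $U_+(t_0)\in\scK_{G,\omega}^+$ as Cauchy data, Lemmas \ref{lem2.2} and \ref{lem2.3} (plus the conservation laws and the remark after \lemref[lem2.3] giving global existence on $\scK_{G,\omega}^+$) show the solution with data $U_+(t_0)$ is global in both time directions and stays in $\scK_{G,\omega}^+$; by uniqueness of solutions to \eqref[NLS] it agrees with $U_+$ on $[t_0,\infty)$, so $U_+$ extends to a global solution with $U_+(0)\in\scK_{G,\omega}^+$ and, by construction, $\norm[U_+(t)-e^{it\Delta}\psi]_{H^1}\to 0$ as $t\to\infty$. The negative-time statement is obtained by the same argument applied to $t\to-\infty$ (or by the time-reversal symmetry $u(t,x)\mapsto\overline{u(-t,x)}$, under which $H^1_G$ is invariant). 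I would present the construction on $[T,\infty)$ in a couple of lines citing \eqref[3.1]--\eqref[3.3] and \propref[SDS], then spend the bulk of the proof on the limit computation giving $U_+(t_0)\in\scK_{G,\omega}^+$.
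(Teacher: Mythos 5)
Your proposal is correct and follows essentially the same route as the paper: the paper simply cites Strauss's wave-operator theorem for the construction on $[T,\infty)$ instead of redoing the contraction argument, and then, exactly as you do, uses $\norm[U_{+}(t)]_{L^{p+1}}\to 0$ to conclude $K(U_{+}(t))>0$ and $S_{\omega}(U_{+}(t))\to \frac{1}{2}\norm[\nabla\psi]_{L^2}^2+\frac{\omega}{2}M(\psi)<l_{\omega}^{G}$ for large $t$, so that $U_{+}(t)\in\scK_{G,\omega}^{+}$, after which Lemmas \ref{lem2.2} and \ref{lem2.3} give globality and $U_{+}(0)\in\scK_{G,\omega}^{+}$. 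No gaps.
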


\begin{proof}
We may assume that $\psi \neq 0$ since the statement is true if $\psi=0$. 
It is known in \cite[Theorem 17]{Str81_2} (see also \cite[Theorem 8]{Str81_1}) that there exist $T \in \R$ and a unique solution $u \in C((T,\infty):H^1(\R^d) )$ of \eqref[NLS] such that 
\begin{equation}
\label{3.6}
\norm[U_{+}(t) - e^{it \Delta}\psi]_{H^1} \to 0 \text{ as } t \to \infty. 
\end{equation}
The uniqueness and the assumption that $\psi$ is $G$-invariant imply that the solution $U_{+}$ is also $G$-invariant. 
By the triangle inequality, the Sobolev embedding, \eqref[3.6], and $\norm[e^{it\Delta}\psi]_{L^{p+1}} \to 0$ as $t \to \infty$ (see \cite[Corollary 2.3.7]{Caz03}), we have
\begin{align*}
\norm[U_{+}(t)]_{L^{p+1}} 
&\leq \norm[U_{+}(t)-e^{it\Delta}\psi]_{L^{p+1}} +\norm[e^{it\Delta}\psi]_{L^{p+1}} 
\\
&\cleq  \norm[U_{+}(t)-e^{it\Delta}\psi]_{H^1} +\norm[e^{it\Delta}\psi]_{L^{p+1}}  \to 0,
\end{align*}
as $t \to \infty$. Therefore, by the conservation laws and the assumption, we obtain 
\begin{equation*}
S_{\omega}(U_{+})=\lim_{t \to \infty}S_{\omega}(U_{+}(t))=\frac{1}{2} \norm[\nabla \psi]_{L^2}^2+  \frac{\omega}{2}  M(\psi)< l_{\omega}^G
\end{equation*}
and 
\begin{equation*}
\lim_{t \to \infty}K(U_{+}(t))=\frac{2}{d}\norm[\nabla \psi]_{L^2}^2>0.
\end{equation*}
Thus, $U_{+}(t)$ belongs to $\scK_{G,\omega}^{+}$ for large $t>T$. This statement, Lemmas \ref{lem2.2}, and \ref{lem2.3}, imply that $U_{+}$ is global in both time directions and $U_{+}(0) \in \scK_{G,\omega}^{+}$. 
\end{proof}

\begin{lemma}[Perturbation Lemma]
\label{Perturb}
Given $A\geq 0$, there exist $\eps (A)>0$ and $C(A)>0$ with the following property. If $u \in C([0,\infty):H^1(\R^d))$ is a solution of \eqref[NLS], if $\tilde{u} \in C([0,\infty):H^1(\R^d))$ and $e\in L_{loc}^1([0,\infty): H^{-1}(\R^d))$ satisfy $i\del_t \tilde{u} + \Delta \tilde{u} +|\tilde{u}|^{p-1}\tilde{u}=e$, for a.e. $t>0$, and if 
\begin{align}
&\norm[\tilde{u}]_{L^\alpha([0,\infty):L^r)} \leq A, 
\\
&\norm[e]_{L^{\beta'}([0,\infty):L^{r'})} \leq \eps(A), 
\\
&\norm[e^{it\Delta} (u(0)- \tilde{u}(0))]_{L^\alpha([0,\infty):L^r)} \leq \eps  \leq \eps(A), 
\end{align}
then $u \in L^\alpha((0,\infty):L^r(\R^d))$ and $\norm[u- \tilde{u}]_{L^\alpha([0,\infty):L^r)} \leq C\eps$. 
\end{lemma}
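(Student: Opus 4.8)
The plan is to prove the Perturbation Lemma by the standard bootstrap/continuity argument, treating $u$ as a perturbation of $\tilde u$ and estimating the difference $w:=u-\tilde u$ via the Strichartz estimates \eqref[3.1]--\eqref[3.3]. First I would partition $[0,\infty)$ into finitely many subintervals $I_j=[t_j,t_{j+1}]$, $j=0,\dots,N-1$, on each of which $\norm[\tilde u]_{L^\alpha(I_j:L^r)}\le\eta$ for a small constant $\eta=\eta(A)$ to be chosen; the number $N=N(A,\eta)$ of such intervals is controlled because $\norm[\tilde u]_{L^\alpha([0,\infty):L^r)}\le A$. On each $I_j$ the difference $w$ solves $i\del_t w+\Delta w = -(|u|^{p-1}u-|\tilde u|^{p-1}\tilde u) + e$ with Duhamel representation $w(t)=e^{i(t-t_j)\Delta}w(t_j)+\int_{t_j}^t e^{i(t-s)\Delta}\bigl(|u|^{p-1}u-|\tilde u|^{p-1}\tilde u\bigr)(s)\,ds - \int_{t_j}^t e^{i(t-s)\Delta}e(s)\,ds$.

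The key estimate is the nonlinear difference bound: using the pointwise inequality $\bigl||u|^{p-1}u-|\tilde u|^{p-1}\tilde u\bigr|\lesssim (|u|^{p-1}+|\tilde u|^{p-1})|w|$ together with H\"older's inequality in space and time, one gets $\norm[|u|^{p-1}u-|\tilde u|^{p-1}\tilde u]_{L^{\beta'}(I_j:L^{r'})}\lesssim \bigl(\norm[w]_{L^\alpha(I_j:L^r)}+\norm[\tilde u]_{L^\alpha(I_j:L^r)}\bigr)^{p-1}\norm[w]_{L^\alpha(I_j:L^r)}$, where the compatibility of the exponents $\alpha,\beta',r,r'$ from \eqref[3.0] is exactly what makes this H\"older computation close (this is the same scheme as in \cite{FXC11}). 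Feeding this into \eqref[3.3] and \eqref[3.2] on $I_j$ gives, with $a_j:=\norm[w]_{L^\alpha(I_j:L^r)}$ and $b_j:=\norm[e^{i(t-t_j)\Delta}w(t_j)]_{L^\alpha(I_j:L^r)}$, an inequality of the form $a_j\le b_j + C\eps(A) + C(\eta+a_j)^{p-1}a_j$; choosing $\eta$ small (depending only on $A$) absorbs the term $C(\eta+a_j)^{p-1}a_j$ into $a_j/2$ as long as $a_j$ stays bounded, so a continuity argument on $I_j$ yields $a_j\le 2b_j+2C\eps(A)$.

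Next I would propagate the initial-data error from one interval to the next. Writing $w(t_{j+1})$ via the Duhamel formula and applying $e^{i(t-t_{j+1})\Delta}$, one bounds $b_{j+1}=\norm[e^{i(t-t_{j+1})\Delta}w(t_{j+1})]_{L^\alpha([t_{j+1},\infty):L^r)}$ in terms of $b_j$, $a_j$, $\eps(A)$ using \eqref[3.3] and the group property of $e^{it\Delta}$; this gives a recursion $b_{j+1}\le C_0 b_j + C_0\eps(A)$ with $b_0\le\eps\le\eps(A)$. Iterating $N$ times produces $b_j\le C_0^j\bigl(\eps + j\eps(A)\bigr)\le (C_0^N N)\bigl(\eps+\eps(A)\bigr)$, and hence $\sum_j a_j\lesssim_N \eps + \eps(A)$. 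Finally I would choose $\eps(A)$ small enough — depending on $N(A)$, hence ultimately only on $A$ — that all the bootstrap hypotheses $a_j\le 1$ (say) used along the way are genuinely satisfied, and so that the cumulative bound gives $\norm[w]_{L^\alpha([0,\infty):L^r)}\le C(A)\eps$, with $C(A)=C_0^N N$ absorbing the constants; the claim $u\in L^\alpha((0,\infty):L^r)$ then follows from $\norm[u]_{L^\alpha}\le\norm[\tilde u]_{L^\alpha}+\norm[w]_{L^\alpha}\le A+C(A)\eps$.

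I expect the main obstacle to be purely bookkeeping rather than conceptual: making the induction on $j$ clean, namely verifying at each step that the smallness needed to run the continuity argument on $I_j$ is not destroyed by the accumulated constants $C_0^j$, and choosing $\eta$ and $\eps(A)$ in the right order ($\eta$ first, from $A$ alone; then $N$ is determined; then $\eps(A)$ from $N$). One should also be slightly careful that the exponent arithmetic with $s=\frac d2-\frac2{p-1}$ and the $\dot H^s$-Strichartz estimate \eqref[3.2] is only used to control the free-evolution terms in the $L^\alpha L^r$ norm, while the Duhamel terms are handled entirely by the non-admissible estimate \eqref[3.3]; mixing these up is the easiest place to go wrong. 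No new idea beyond \cite{FXC11} is required, so I would keep the write-up brief and refer to the literature for the exponent computation.
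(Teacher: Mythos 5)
Your proposal is correct and follows exactly the standard long-time perturbation argument; the paper itself gives no proof but simply cites \cite[Proposition~4.7]{FXC11}, whose proof is the same interval-partition plus bootstrap scheme you describe, with the exponent arithmetic $\frac{1}{\beta'}=\frac{p}{\alpha}$, $\frac{1}{r'}=\frac{p}{r}$ closing the H\"older step as you claim. The only cosmetic remark is that the $\dot H^s$ estimate \eqref{3.2} is not actually needed here, since the free-evolution term on the first interval is controlled directly by the hypothesis on $e^{it\Delta}(u(0)-\tilde u(0))$ and on later intervals by the Duhamel recursion via \eqref{3.3}.
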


See  \cite[Proposition 4.7]{FXC11} for the proof. 

%%%%%%%%%%%%%%%%%%%%%%%%%%%%%%%%%%%%%%%%%%%%%%%%%%%%%%%%%%%%%%%%%%%%%%%%%%%%%%%%%%%%%%%%%%%%%%%%%%%%%%%%%%%%%%

\section{Proof of the Scattering Result for the finite group invariant solutions}

In this section, we prove \thmref[thm1.1] (1) (i). Let a subgroup $G$ of $\R/2\pi\Z \times \R^d$ be finite throughout this section. 

%%%%%%%%%%%%%%%%%%%%%%%%%%%%%%%%%%%%%%%%%%%%%%%%%%%%%%%

\subsection{Linear Profile Decomposition with finite group invariance}
We prove a linear profile decomposition for functions with $G$-invariance. 
Let $\tau_y \varphi(x)=\varphi(x-y)$ throughout this paper.  

\begin{proposition}[Linear Profile Decomposition with finite group invariance]
\label{LPD}
Let $\{\varphi_n\} \subset H_{G}^1$ be a bounded sequence. Then, after replacing a subsequence, for $j \in \N$ there exist a subgroup $G^j$ of $\R/2\pi\Z \times O(d)$, $\psi^j \in H_{G^j}^1$, $\{W_n^j\} \subset H_{G^j}^1$, $\{t_n^j\} \subset \R$, and $\{x_n^j\} \subset \R^d$ such that 
\begin{equation}
\varphi_n = \sum_{j=1}^J e^{i t_n^j \Delta} \sum_{\cG \in G}  \frac{\cG (\tau_{x_n^j} \psi^j )}{\#G} +  \sum_{\cG \in G}  \frac{\cG  W_n^J }{\#G} 
\end{equation}
for every $J\in \N$, and the following statements hold. 
\begin{enumerate}
\item For any fixed $j$, $\{t_n^j\}$ satisfies either $t_n^j=0$ or  $t_n^j \to \pm \infty$ as  $n \to \infty$,
\item For any fixed $j$, $\{x_n^j\}$ satisfies $x_n^j = \cG x_n^j$ for all $\cG \in G^j$ and $|x_n^j - \cG x_n^j| \to \infty$ for all $\cG \in G\setminus G^j$.
\item We have the orthogonality of the parameters: for $j\neq h$, 
\[\lim_{n \to \infty} |t_n^j- t_n^h|= \infty \text{ or }  \lim_{n \to \infty} |\cG x_n^j-\cG' x_n^h|=\infty \text{ for all } \cG,\cG' \in G.\]
\item We have smallness of the remainder: 
\[ \limsup_{n \to \infty} \norm[e^{it\Delta}  \sum_{\cG \in G}  \frac{\cG  W_n^J }{\#G}  ]_{L^\alpha(\R:L^r)} \to 0 \text{ as } J \to \infty.\] 
\item We have the orthogonality in norms: for all $\lambda \in [0,1]$,
\begin{align}
\norm[\varphi_n]_{\dot{H}^\lambda}^2 &= \sum_{j=1}^J \norm[\sum_{\cG \in G}  \frac{\cG (\tau_{x_n^j} \psi^j )}{\#G}]_{\dot{H}^\lambda}^2 +\norm[ \sum_{\cG \in G}  \frac{\cG  W_n^J }{\#G}  ]_{\dot{H}^\lambda}^2+o_n(1),
\\
\norm[\varphi_n]_{L^{p+1}}^{p+1} &= \sum_{j=1}^J \norm[e^{i t_n^j \Delta} \sum_{\cG \in G}  \frac{\cG (\tau_{x_n^j} \psi^j )}{\#G}]_{L^{p+1}}^{p+1} +\norm[ \sum_{\cG \in G}  \frac{\cG  W_n^J }{\#G} ]_{L^{p+1}}^{p+1}+o_n(1)
\end{align}
and, in particular, 
\begin{align} 
S_{\omega}(\varphi_n) &= \sum_{j=1}^J S_{\omega}\l(e^{i t_n^j \Delta} \sum_{\cG \in G}  \frac{\cG (\tau_{x_n^j} \psi^j )}{\#G} \r) +S_{\omega}\l( \sum_{\cG \in G}  \frac{\cG  W_n^J }{\#G}  \r)+o_n(1),
\\
K(\varphi_n) &= \sum_{j=1}^J K\l( e^{i t_n^j \Delta} \sum_{\cG \in G}  \frac{\cG (\tau_{x_n^j} \psi^j )}{\#G} \r) +K\l( \sum_{\cG \in G}  \frac{\cG  W_n^J }{\#G}  \r)+o_n(1).
\end{align}
\end{enumerate}
\end{proposition}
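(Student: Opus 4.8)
The plan is to adapt the standard Bahouri--Gérard / Keraani profile decomposition to the $G$-invariant setting by running the usual extraction scheme while keeping track of how translation parameters interact with the group action. First I would recall that for a bounded sequence $\{\varphi_n\}\subset H^1(\R^d)$ the obstruction to strong convergence of $e^{it\Delta}\varphi_n$ in $L^\alpha(\R:L^r)$ is captured, via the refined Sobolev/Strichartz inequality, by bubbles of the form $e^{it_n^j\Delta}\tau_{x_n^j}\psi^j$; one extracts $(t_n^j,x_n^j)$ and $\psi^j$ inductively, at each stage passing to the weak limit along the shifted sequence $e^{-it_n^j\Delta}\tau_{-x_n^j}(\text{remainder})$, normalizing $t_n^j$ to be either $0$ or $\to\pm\infty$ by absorbing bounded translates in time into the profile (this uses continuity of the flow), and ensuring pairwise orthogonality of $(t_n^j,x_n^j)$. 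The new input is that $\varphi_n$ is $G$-invariant, so $\varphi_n=\frac{1}{\#G}\sum_{\cG\in G}\cG\varphi_n$; applying the group average to the raw decomposition replaces each bare bubble $e^{it_n^j\Delta}\tau_{x_n^j}\psi^j$ by $\frac{1}{\#G}\sum_{\cG\in G}\cG(e^{it_n^j\Delta}\tau_{x_n^j}\psi^j)=e^{it_n^j\Delta}\frac{1}{\#G}\sum_{\cG\in G}\cG(\tau_{x_n^j}\psi^j)$, since each $\cG=(\theta,\cR)$ commutes with $e^{it\Delta}$ (the Laplacian is rotation-invariant and gauge-invariant). So the asserted form is forced; what remains is to identify the stabilizer-type subgroup $G^j$ and to verify the orthogonality and norm-decoupling statements survive the averaging.

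The key point for (2) is a pigeonhole/compactness argument on the finite set of translates $\{\cG x_n^j : \cG\in G\}$: for each $j$, passing to a further subsequence, the map $\cG\mapsto \cG x_n^j$ sorts the elements of $G$ into clusters according to whether $|\cG x_n^j - \cG' x_n^j|$ stays bounded or diverges; the set $G^j:=\{\cG\in G : \{x_n^j-\cG x_n^j\}\text{ bounded}\}$ is a subgroup (closure under the group operation follows from the triangle inequality and boundedness), and after translating $\psi^j$ by the bounded shift one arranges $x_n^j=\cG x_n^j$ exactly for $\cG\in G^j$ and $|x_n^j-\cG x_n^j|\to\infty$ otherwise; moreover $\psi^j$ can be taken $G^j$-invariant (and $\theta$-compatible, hence the notation $H^1_{G^j}$) because the weak limit defining $\psi^j$ inherits invariance under the subgroup that fixes the base point. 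Then the sum $\frac{1}{\#G}\sum_{\cG\in G}\cG(\tau_{x_n^j}\psi^j)$ genuinely consists of $\#G/\#G^j$ bubbles traveling to mutually infinitely separated spatial locations, which is exactly the structure one needs: distinct cosets $\cG G^j$ give asymptotically orthogonal translates. For (3), the orthogonality of the raw parameters $(t_n^j,x_n^j)$ combined with the finite group action gives the stated dichotomy for all $\cG,\cG'\in G$ — this is just a finite union of orthogonality statements. Statements (4) and (1) come directly from the construction of the underlying (non-invariant) decomposition together with boundedness of $\cG\mapsto\cG$ on every relevant norm, noting $\|\frac{1}{\#G}\sum_\cG \cG W_n^J\|_X\le \|W_n^J\|_X$.

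For (5), the norm decoupling, the orthogonality established in (2)--(3) is exactly what makes cross terms vanish: in $\dot H^\lambda$ one expands $\|\sum_j \frac{1}{\#G}\sum_\cG \cG(\tau_{x_n^j}\psi^j) + \frac{1}{\#G}\sum_\cG\cG W_n^J\|_{\dot H^\lambda}^2$ and uses that for $(j,\cG)\neq(h,\cG')$ (modulo the $G^j$-redundancy, which is accounted for by the $\frac{1}{\#G}$ normalization collapsing to an effective $\frac{1}{\#G^j}$ factor) the inner products tend to $0$ — either because $|\cG x_n^j-\cG' x_n^h|\to\infty$ and the two translates of fixed $H^1$ functions decouple in $\dot H^\lambda$ for $\lambda\in[0,1]$ by interpolation from the $L^2$ and $\dot H^1$ cases, or because $|t_n^j-t_n^h|\to\infty$ and one invokes the standard dispersive decay argument (for the $L^2$-type and $L^{p+1}$ pieces) after noting the unitary $e^{it_n\Delta}$ preserves $\dot H^\lambda$; the remainder term decouples from the profiles because $e^{-it_n^j\Delta}\tau_{-x_n^j}(\text{remainder})\wto 0$ by construction. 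The $L^{p+1}$ decoupling is the Brézis--Lieb-type statement and additionally requires handling the $e^{it_n^j\Delta}$ factors — when $t_n^j\to\pm\infty$ the profile $e^{it_n^j\Delta}\tau_{x_n^j}\psi^j\to0$ in $L^{p+1}$ so that term's contribution is absorbed into $o_n(1)$ unless it is grouped with the remainder, which is why $e^{it_n^j\Delta}$ appears inside the $L^{p+1}$ (and hence $S_\omega$, $K$) identities but not the $\dot H^\lambda$ one. The consequences for $S_\omega$ and $K$ are then immediate linear combinations. The main obstacle I expect is bookkeeping: cleanly proving the subgroup structure and the exact-equality $x_n^j=\cG x_n^j$ for $\cG\in G^j$ after the bounded-shift adjustment, and making sure the $\frac{1}{\#G}$ normalizations and the coset multiplicities match up so that no spurious factors appear in the norm identities; the analytic inputs (dispersive decay, weak-limit orthogonality, Brézis--Lieb) are all standard once the combinatorial skeleton is correct.
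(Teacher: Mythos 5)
Your overall skeleton (iterative extraction, group averaging, the stabilizer subgroup $G^j$ obtained from the bounded/unbounded dichotomy for $\{x_n^j-\cG x_n^j\}$, and the adjustment of $x_n^j$ to an exact fixed point of $G^j$) matches the paper, which packages the one-step extraction into \lemref[IS] and the combinatorial facts into \lemref[lemA.0] and \lemref[lemA]. There is, however, a genuine gap at the point where you pass from the raw (non-invariant) decomposition to the averaged one: the claim that orthogonality of the raw parameters ``combined with the finite group action gives the stated dichotomy for all $\cG,\cG'$'' is false. Raw orthogonality only gives $|t_n^j-t_n^h|+|x_n^j-x_n^h|\to\infty$, which does not prevent $\cG x_n^j$ from coinciding asymptotically with $x_n^h$ for some nontrivial $\cG$. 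Worse, for a $G$-invariant sequence this is exactly what happens: take $d=1$, $G=\{(0,1),(0,-1)\}$ and $\varphi_n(x)=\psi(x-n)+\psi(-x-n)$. The raw decomposition produces two profiles, $\psi$ at $x_n^1=n$ and $\psi(-\cdot)$ at $x_n^2=-n$, which are raw-orthogonal since $|x_n^1-x_n^2|=2n\to\infty$; yet their group averages are the identical function $\tfrac12\bigl(\psi(x-n)+\psi(-x-n)\bigr)$, so statement (3) fails (indeed $|(-1)\cdot x_n^1-x_n^2|=0$) and the decoupling (5) fails by a factor of $2$. Averaging each raw bubble therefore cannot work without first regrouping raw profiles that lie in the same $G$-orbit of concentration points into a single averaged bubble, and that regrouping is precisely the nontrivial content of the proposition.

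The paper avoids this by building the averaging into each extraction step rather than applying it at the end: after locating a concentration point $x_n^1$ and the weak limit $\psi^1$, it subtracts the \emph{entire orbit} at once, setting $W_n^1=\varphi_n-e^{it_n^1\Delta}\tau_{x_n^1}\psi^1$ and passing the $G$-invariant remainder $\tilde W_n^1=\sum_{\cG\in G}\cG W_n^1/\#G$ to the next step. The crucial output of \lemref[IS] (1) is that this remainder satisfies $e^{-it_n^1\Delta}\tau_{-\cG x_n^1}\tilde W_n^1\wto 0$ for \emph{every} $\cG\in G$, not just for the identity; statement (3) is then derived inductively from this weak-null property via \lemref[lemA.2]: since the next profile is a nonzero weak limit of the remainder along $(t_n^2,\cG' x_n^2)$, its parameters must separate from every translate $(t_n^1,\cG x_n^1)$ of the previous orbit. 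If you reorganize your argument so that the full orbit is removed at each stage and you prove the ``weakly null along every $\cG x_n^j$'' property of the averaged remainder, the rest of your outline --- the subgroup dichotomy for (2), the coset bookkeeping producing the effective factor $\#G/\#G^j$, the interpolation and Br\'ezis--Lieb arguments for (5), and the smallness of the remainder for (4) --- goes through essentially as in the paper.
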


\begin{lemma}
\label{IS}
Let $a>0$ and $\{\varphi_n\} \subset H_G^1$ satisfy
\[ \limsup_{n \to \infty} \norm[\varphi_n]_{H^1} \leq a <\infty. \]
If $\norm[e^{it\Delta} \varphi_n ]_{L^\infty(\R:L^{p+1})} \to A$ as $n\to \infty$, then there exist a subsequence, which is still denoted by $\{\varphi_n\}_{n\in \N}$, a subgroup $G'$ of $G$, $\psi\in H_{G'}^1$, sequences $\{t_n\}_{n\in \N}\subset \R$, $\{x_n\}_{n\in \N} \subset \R^d$, and $\{W_n\}_{n\in \N} \subset H_{G'}^1$ such that 
\begin{equation} \label{1.1}
\varphi_n = e^{it_n \Delta}  \sum_{\cG \in G}  \frac{\cG (\tau_{x_n} \psi )}{\#G} +\sum_{\cG \in G}  \frac{\cG  W_n}{\#G} ,
\end{equation}
and the following hold. 
\begin{enumerate}
\item $e^{-it_n\Delta} \tau_{- \cG x_n} \varphi_n \wto \cG \psi/(\#G/\#G')$ in $H^1(\R^d)$ 
and $e^{-it_n\Delta} \tau_{-\cG x_n} \tilde{W}_n  \wto 0$ in $H^1(\R^d)$ for all $\cG \in G$, where $\tilde{W}_n:=\sum_{\cG \in G} \cG  W_n/\#G$.
\item The sequence $\{t_n\}$ satisfies either $t_n=0$ or  $t_n \to \pm \infty$ as  $n \to \infty$.
\item The sequence $\{x_n\}$ satisfies $\cG' x_n=x_n$ for all $\cG' \in G'$ and $|x_n - \cG x_n| \to \infty$ for all $\cG \in G\setminus G'$.
\item We have the orthogonality in norms:
\[ \norm[\varphi_n]_{\dot{H}^\lambda}^2 -  \norm[ \sum_{\cG \in G}  \frac{\cG (\tau_{x_n} \psi )}{\#G}]_{\dot{H}^\lambda}^2 -  \norm[\sum_{\cG \in G}  \frac{\cG  W_n}{\#G} ]_{\dot{H}^\lambda}^2 \to 0 \text{ as } n \to \infty,\]
for all $0 \leq \lambda \leq 1$.
\[ \norm[\varphi_n]_{L^{p+1}}^{p+1} -  \norm[ e^{it_n\Delta}  \sum_{\cG \in G}  \frac{\cG (\tau_{x_n} \psi )}{\#G}]_{L^{p+1}}^{p+1} -  \norm[\sum_{\cG \in G}  \frac{\cG  W_n }{\#G} ]_{L^{p+1}}^{p+1} \to 0 \text{ as } n \to \infty.\]
\item We have
\[ \norm[\psi]_{H^1} \geq \nu A^{\frac{d-2\Lambda^2}{2\Lambda(1-\Lambda)}} a^{-\frac{d-2\Lambda}{2\Lambda(1-\Lambda)}},\]
where $\Lambda:= d(p-1)/\{2(p+1)\} \in (0, \min\{1,d/2\})$ and the constant $\nu>0$ is independent of $a$, $A$, and $\{\varphi_n\}_{n\in \N}$.
\item If $A=0$, then for every sequences $\{t_n\}_{n\in \N}\subset \R$, $\{x_n\}_{n\in \N} \subset \R^d$, and $\{W_n\}_{n\in \N} \subset H^1(\R^d)$ satisfying \eqref[1.1] and (1), we must have $\psi=0$. 
\end{enumerate}
\end{lemma}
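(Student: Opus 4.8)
The plan is to prove \lemref[IS] in two stages: first, disregarding the group, to extract a single profile from $\{\varphi_n\}$ by an inverse-Strichartz argument built on a refined Sobolev inequality, and then to symmetrize that profile over $G$, distinguishing the elements of $G$ that displace its center $\{x_n\}$ only boundedly from those that push it to spatial infinity. The case $A=0$ is immediate: take $\psi=0$, $W_n=\varphi_n$, $G'=G$, $t_n=0$, $x_n=0$, and (1)--(5) hold trivially. For item (6), suppose $t_n,x_n,W_n,\psi$ satisfy \eqref[1.1] and (1), so that $\eta_n:=e^{-it_n\Delta}\tau_{-x_n}\varphi_n\wto\psi/(\#G/\#G')$ in $H^1$. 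Since $e^{it\Delta}$ commutes with translations, $\varphi_n=\tau_{x_n}e^{it_n\Delta}\eta_n$, hence $\sup_{t\in\R}\norm[e^{it\Delta}\varphi_n]_{L^{p+1}}=\sup_{s\in\R}\norm[e^{is\Delta}\eta_n]_{L^{p+1}}\geq\norm[\eta_n]_{L^{p+1}}$ (take $s=0$); as $2<p+1<2d/(d-2)$, the embedding $H^1(B_R)\hookrightarrow L^{p+1}(B_R)$ is compact, so $\eta_n\to\psi/(\#G/\#G')$ strongly in $L^{p+1}(B_R)$ for every $R$, and letting $n\to\infty$ then $R\to\infty$ gives $0=A\geq\norm[\psi/(\#G/\#G')]_{L^{p+1}(\R^d)}$, i.e. $\psi=0$.

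For $A>0$ I would first choose $t_n$ with $\norm[e^{-it_n\Delta}\varphi_n]_{L^{p+1}}>A-1/n$ (legitimate since $\{e^{-it\Delta}\}$ and $\{e^{it\Delta}\}$ are the same family, so $\sup_t\norm[e^{-it\Delta}\varphi_n]_{L^{p+1}}\to A$) and set $g_n:=e^{-it_n\Delta}\varphi_n\in H_G^1$, so that $\limsup\norm[g_n]_{H^1}\leq a$ and $\liminf\norm[g_n]_{L^{p+1}}\geq A$. The critical Sobolev embedding $\dot H^{\Lambda}\hookrightarrow L^{p+1}$ and its Gérard--Meyer--Oru refinement
\[
\norm[f]_{L^{p+1}}\ \cleq\ \norm[f]_{\dot H^{\Lambda}}^{2/(p+1)}\,\norm[f]_{\dot B^{\Lambda-d/2}_{\infty,\infty}}^{1-2/(p+1)},
\]
combined with $\norm[f]_{\dot H^{\Lambda}}\leq\norm[f]_{H^1}$, then force the homogeneous Besov quantity of $g_n$ to be bounded below; pigeonholing over Littlewood--Paley frequencies (the $H^1$-bound confines the relevant frequency to a compact subset of $(0,\infty)$ depending only on $a$ and $A$, since $0<\Lambda<\min\{1,d/2\}$) and maximizing in the translation variable produces $\{x_n\}\subset\R^d$ such that, along a subsequence, $\tau_{-x_n}g_n=e^{-it_n\Delta}\tau_{-x_n}\varphi_n\wto h\neq 0$ in $H^1$ with $\norm[h]_{H^1}\cgeq A^{(d-2\Lambda^2)/(2\Lambda(1-\Lambda))}a^{-(d-2\Lambda)/(2\Lambda(1-\Lambda))}$. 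Passing to a further subsequence so that $t_n$ converges in $[-\infty,\infty]$, and absorbing $e^{it_\infty\Delta}$ into $h$ (by weak--strong convergence of $e^{i(t_\infty-t_n)\Delta}\to I$ tested against the weakly convergent sequence) when $t_\infty$ is finite, one arranges $t_n=0$ or $t_n\to\pm\infty$, which is (2).

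Next I would symmetrize. Pass to a further subsequence so that for every $\cG\in G$ the sequence $\{x_n-\cG x_n\}$ is either bounded or satisfies $|x_n-\cG x_n|\to\infty$, and set $G':=\{\cG\in G:\{x_n-\cG x_n\}\text{ is bounded}\}$. Then $G'$ is a subgroup, because the spatial parts of the elements of $G$ are orthogonal, hence isometric: $x_n-\cG_1\cG_2 x_n=(x_n-\cG_1 x_n)+\cG_1(x_n-\cG_2 x_n)$ and $x_n-\cG^{-1}x_n=-\cG^{-1}(x_n-\cG x_n)$. Replacing $x_n$ by its $G'$-average $\frac{1}{\#G'}\sum_{\cG'\in G'}\cG' x_n$ — which is exactly $G'$-fixed and differs from the old $x_n$ by a bounded amount — makes $e^{-it_n\Delta}\tau_{-x_n}\varphi_n$ a $G'$-invariant sequence, so its weak limit, still denoted $h$, is $G'$-invariant and still nonzero; this gives the constraints on $\{x_n\}$ in (3). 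Then put $\psi:=(\#G/\#G')\,h\in H_{G'}^1$ and $W_n:=\varphi_n-e^{it_n\Delta}\sum_{\cG\in G}\frac{\cG(\tau_{x_n}\psi)}{\#G}$; since $\varphi_n$ and the averaged profile are $G$-invariant and $e^{it_n\Delta}$ commutes with the $G$-action, $W_n\in H_G^1\subset H_{G'}^1$ and $\tilde W_n=\sum_{\cG\in G}\cG W_n/\#G=W_n$. For (1), use $\tau_{-\cG x_n}\varphi_n=\cG\tau_{-x_n}\varphi_n$ and weak continuity of the isometry $\cG$ to get $e^{-it_n\Delta}\tau_{-\cG x_n}\varphi_n\wto\cG h=\cG\psi/(\#G/\#G')$; subtracting the recentered profile, whose contributions from the $\#G'$ elements of the coset $\cG G'$ coincide (by $\cG'x_n=x_n$ and $G'$-invariance of $\psi$) while those from the other cosets escape and vanish weakly, yields $e^{-it_n\Delta}\tau_{-\cG x_n}\tilde W_n\wto 0$. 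For (4), apply the unitary $e^{-it_n\Delta}$ and expand the square: using that the $\cG$'s are $\dot H^{\lambda}$-isometries and $W_n$ is $G$-invariant, the cross term collapses to $2\re\langle\psi,e^{-it_n\Delta}\tau_{-x_n}W_n\rangle_{\dot H^{\lambda}}\to 0$ by (1); the $L^{p+1}$ identity is the same argument through a Brézis--Lieb lemma, keeping $e^{it_n\Delta}$ on the profile, and the $S_{\omega}$, $K$ identities are linear combinations. Finally (5) holds because $\norm[\psi]_{H^1}=(\#G/\#G')\norm[h]_{H^1}\geq\norm[h]_{H^1}$.

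The analytic heart is the refined-Sobolev step producing the quantitative lower bound in (5); this is essentially available from \cite{FXC11} together with the $\dot H^{\Lambda}$-critical Sobolev embedding, so I do not expect it to be the main obstacle. The real difficulty — and the reason \lemref[IS] warrants a full proof rather than a citation — is the bookkeeping of the $\R/2\pi\Z\times O(d)$-action: checking that the bounded-displacement elements form a subgroup $G'$, relocating $x_n$ to an exact $G'$-fixed point without losing non-triviality of $h$, making $\psi$ genuinely $G'$-invariant, and keeping the multiplicities $\#G/\#G'$ mutually consistent across \eqref[1.1] and items (1) and (4) — most delicately the factor $1/(\#G/\#G')$ in (1), which is exactly what forces the normalization $\psi=(\#G/\#G')\,h$ and which propagates into the $\frac{1}{\#G/\#G'}\norm[\psi]^2$ contributions implicit in (4).
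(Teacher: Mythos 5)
Your proposal is correct and follows the same overall architecture as the paper's proof: extract a single concentration profile $(t_n,x_n,h)$ by an inverse-Sobolev argument, identify $G'$ as the set of group elements displacing $\{x_n\}$ only boundedly, recenter $x_n$ to an exactly $G'$-fixed sequence, and then symmetrize with the normalization $\psi=(\#G/\#G')h$ so that the coset structure produces the factors $\#G/\#G'$ in (1) and (4); your treatment of (1), (4), (6) and of the subgroup property of $G'$ (via orthogonality of the matrix parts) matches the paper's. Two sub-steps are carried out by genuinely different tools. First, for the quantitative extraction you invoke the G\'erard--Meyer--Oru refined Sobolev inequality in $\hbesov[\Lambda-d/2,\infty,\infty]$ plus Littlewood--Paley pigeonholing, whereas the paper runs a more elementary, self-contained mollifier argument: it shows $e^{it\Delta}(\chi_\rho*\varphi_n)$ retains at least $A/2$ of the $L^{p+1}$ mass for $\rho\sim (a/A)^{1/(1-\Lambda)}$, interpolates $L^{p+1}$ between $L^2$ and $L^\infty$ to find a point $(\tilde t_n,\tilde x_n)$ of large amplitude, and reads off the lower bound (5) from the pointwise estimate $|\chi_\rho*u(x)|\leq\kappa\rho^{(d-2\Lambda)/2}\norm[u]_{\dot H^{\Lambda}}$; both routes yield the same exponents in (5), though you assert rather than derive them. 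Second, to make $x_n$ exactly $G'$-fixed the paper passes through \lemref[lemA.0] (convergence, not just boundedness, of the displacements) and the linear-algebra \lemref[lemA]; your replacement of $x_n$ by its $G'$-average $\frac{1}{\#G'}\sum_{\cG'\in G'}\cG' x_n$ is simpler, is exactly $G'$-fixed by the coset-permutation argument, differs from $x_n$ by a bounded (hence, after a further subsequence, convergent) amount, and therefore preserves the nontriviality and norm of the weak limit as well as the divergence $|x_n-\cG x_n|\to\infty$ for $\cG\in G\setminus G'$ --- so it legitimately bypasses \lemref[lemA]. The only cosmetic difference in the decomposition is that your $W_n$ is already $G$-invariant (you subtract the full symmetrized profile), while the paper's $W_n$ is only $G'$-invariant and is symmetrized afterwards; both satisfy \eqref[1.1] and the requirement $W_n\in H_{G'}^1$.
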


\begin{proof} 

Let $\widehat{\chi} \in C_0^{\infty}(\R^d)$ satisfy $0\leq \widehat{\chi} \leq 1$ and 
\begin{align*}
\widehat{\chi} (\xi)=\l\{ 
\begin{array}{ll}
1, & \text{ if } |\xi| \leq 1,
\\
0, & \text{ if } |\xi| \geq 2.
\end{array}
\r.
\end{align*}
Given $\rho>0$, we set $\widehat{\chi_{\rho}}(\xi):= \widehat{\chi}(\xi/\rho)$. 
For any $u \in H^1(\R^d)$  and $\lambda \in [0,1]$, we observe that
\begin{align*} 
\norm[u - \chi_\rho * u]_{\dot{H}^\lambda}^2
&=  \norm[ |\xi|^{\lambda}(\widehat{u} - \widehat{\chi_\rho} \widehat{ u} )]_{L^2}^2
=\int_{\R^d} |\xi|^{2\lambda} (1-\widehat{\chi_\rho})^2 |\widehat{u}|^2 d\xi 
\leq \int_{|\xi| \geq \rho} |\xi|^{2\lambda}  |\widehat{u}|^2 d\xi 
\\
&=\int_{|\xi| \geq \rho} |\xi|^{-2(1-\lambda)}  |\xi|^{2}  |\widehat{u}|^2 d\xi 
\leq \rho^{-2(1-\lambda)} \int_{|\xi| \geq \rho}  |\xi|^{2}  |\widehat{u}|^2 d\xi 
\\
& \leq \rho^{-2(1-\lambda)} \int_{\R^d}  |\xi|^{2}  |\widehat{u}|^2 d\xi  
= \rho^{-2(1-\lambda)} \norm[\nabla u]_{L^2}^2.
\end{align*}
Therefore, we have
\begin{equation} \label{1.2}
\norm[u - \chi_\rho * u]_{\dot{H}^\lambda} \leq \rho^{-(1-\lambda)} \norm[\nabla u]_{L^2}. 
\end{equation}
By Plancherel's theorem, we have
\[ \chi_{\rho}*u(x)= \int_{\R^d} \chi_\rho (y) u(x-y)dy = (-1)^{d} \int_{\R^d}  \widehat{\chi_\rho }(\xi) e^{-iy\cdot\xi} \widehat{u}(\xi)d\xi.\]
Since $\Lambda < d/2$, by the H\"{o}lder inequality,  we see that, for any $u \in H^1(\R^d)$,
\begin{align} 
\label{3.16}
|\chi_{\rho}*u(x)| 
&\leq  \int_{\R^d} |\widehat{\chi_\rho }(\xi) ||\widehat{u}(\xi)|d\xi  
\leq  \int_{|\xi| \leq 2\rho}  |\widehat{u}(\xi)|d\xi 
= \int_{|\xi| \leq 2\rho} |\xi|^{-\Lambda} |\xi|^{\Lambda}  |\widehat{u}(\xi)|d\xi
\\ \notag
&\leq \l( \int_{|\xi| \leq 2\rho} |\xi|^{-2\Lambda} d\xi \r)^{1/2} \norm[u]_{\dot{H}^{\Lambda}}\leq \kappa \rho^{\frac{d-2\Lambda}{2}} \norm[u]_{\dot{H}^{\Lambda}},  
\end{align}
where $\kappa$ is a  constant independent of $\rho$ and $u$. 

First, we consider the case of $A>0$. It follows from the Sobolev embedding $\norm[u]_{L^{p+1}} \leq C \norm[u]_{\dot{H}^\Lambda}$, the isometry of $e^{it\Delta}$ on $\dot{H}^{\Lambda}(\R^d)$,  \eqref[1.2], $0\leq \Lambda \leq 1$, and the assumption of $\limsup_{n\to\infty}\norm[\varphi_n]_{H^1} \leq a < \infty$ that 
\begin{align*} 
\norm[e^{it\Delta} \varphi_n -e^{it\Delta}(\chi_\rho * \varphi_n)]_{L^{p+1}} 
&\leq C\norm[e^{it\Delta} \varphi_n -e^{it\Delta}(\chi_\rho * \varphi_n)]_{\dot{H}^\Lambda} 
=C \norm[ \varphi_n -(\chi_\rho * \varphi_n)]_{\dot{H}^\Lambda} 
\\
& \leq  C \rho^{-(1-\Lambda)} \norm[\nabla \varphi_n]_{L^2}
\leq 2C \rho^{-(1-\Lambda)}  a,
\end{align*}
for large $n \in \N$. Choosing $\rho= (4C a/A)^{\frac{1}{1-\Lambda}}$, we obtain
\[ \norm[e^{it\Delta} \varphi_n -e^{it\Delta}(\chi_\rho * \varphi_n)]_{L^{p+1}} 
\leq 2C \rho^{-(1-\Lambda)}  a 
\leq \frac{A}{2}. \]
Thus, we have 
\begin{equation} 
\label{3.17}
\norm[e^{it\Delta} \varphi_n -e^{it\Delta}(\chi_\rho * \varphi_n)]_{L^{\infty}(\R:L^{p+1})} 
\leq \frac{A}{2}.
\end{equation}
By the triangle inequality, $\norm[e^{it\Delta} \varphi_n ]_{L^\infty(\R:L^{p+1})} \to A$ as $n\to \infty$, and \eqref[3.17], we get  
\begin{align}
\label{3.18}
&\norm[e^{it\Delta}( \chi_\rho *  \varphi_n )]_{L^\infty(\R:L^{p+1})}
\\ \notag
&\geq \norm[e^{it\Delta} \varphi_n ]_{L^\infty(\R:L^{p+1})} - \norm[e^{it\Delta} \varphi_n -e^{it\Delta}(\chi_\rho * \varphi_n)]_{L^{\infty}(\R:L^{p+1})} 
\\ \notag
& \geq \frac{3A}{4} -\frac{A}{2} = \frac{A}{4},
\end{align}
for large $n$. On the other hand, for large $n$, we have
\begin{align}
\label{3.19}
\norm[e^{it\Delta}( \chi_\rho *  \varphi_n )]_{L^\infty(\R:L^{p+1})} 
&\leq \norm[e^{it\Delta}( \chi_\rho *  \varphi_n )]_{L^\infty(\R:L^{2})}^{\frac{d-2\Lambda}{d}} 
\norm[e^{it\Delta}( \chi_\rho *  \varphi_n )]_{L^\infty(\R:L^{\infty})}^{\frac{2\Lambda}{d}}
\\ \notag
& \leq \norm[\varphi_n ]_{L^\infty(\R:L^{2})}^{\frac{d-2\Lambda}{d}} 
\norm[e^{it\Delta}( \chi_\rho *  \varphi_n )]_{L^\infty(\R:L^{\infty})}^{\frac{2\Lambda}{d}}
\\ \notag
& \leq (2a)^{\frac{d-2\Lambda}{d}} 
\norm[e^{it\Delta}( \chi_\rho *  \varphi_n )]_{L^\infty(\R:L^{\infty})}^{\frac{2\Lambda}{d}},
\end{align}
where we have used  the H\"{o}lder inequality, the isometry of $e^{it\Delta}$ on $L^2(\R^d)$, $\norm[\chi_\rho * u]_{L^2}\leq \norm[u]_{L^2}$ for all $u \in H^1(\R^d)$, and the assumption of $\limsup_{n\to\infty}\norm[\varphi_n]_{H^1} \leq a < \infty$.
Combining \eqref[3.18] and \eqref[3.19], we get, for large $n$,
\[\norm[e^{it\Delta}( \chi_\rho *  \varphi_n )]_{L^\infty(\R:L^{\infty})}^{\frac{2\Lambda}{d}} \geq (2a)^{-\frac{d-2\Lambda}{d}} \frac{A}{4},\]
or equivalently, 
\[\norm[e^{it\Delta}( \chi_\rho *  \varphi_n )]_{L^\infty(\R:L^{\infty})} 
\geq (2a)^{-\frac{d-2\Lambda}{2\Lambda}} \l( \frac{A}{4}\r)^{\frac{d}{2\Lambda}}. \]
Therefore, there exist $\{\tilde{t}_n\}_{n\in \N}\subset \R$ and $\{\tilde{x}_n\}_{n\in \N} \subset \R^d$ such that 
\begin{equation} 
\label{3.20}
\l|e^{i\tilde{t}_n\Delta}( \chi_\rho *  \varphi_n )(\tilde{x}_n)\r|
\geq (4a)^{-\frac{d-2\Lambda}{2\Lambda}} \l( \frac{A}{4}\r)^{\frac{d}{2\Lambda}},
\end{equation}
for large $n$.
We consider the following two cases. 
\begin{description}
\item[Case1] $\{\tilde{t}_n\}_{n\in \N}\subset \R$ is unbounded.
\item[Case2] $\{\tilde{t}_n\}_{n\in \N}\subset \R$ is bounded.
\end{description}
%%%%%%%%%%%%%%%%%%%%%%%%%%%%%%%%%%%%%%%%%

\noindent{\bf Case1:} Since $\{\tilde{t}_n\}_{n\in \N}\subset \R$ is unbounded, we may assume $\tilde{t}_n \to \pm \infty$ as $n \to \infty$ taking a subsequence. Let $t_n:=\tilde{t}_n$.
Taking a subsequence and using \lemref[lemA.0], we obtain a subgroup of $G'$ of $G$ such that a subsequence, which is still denoted by $\{\tilde{x}_n\}$, satisfies that
\begin{equation*}
\l\{
\begin{array}{ll}
\tilde{x}_n - \cG' \tilde{x}_n \to \bar{x}_{\cG'}  \text{ as } n \to \infty, & \forall \cG' \in G',
\\
|\tilde{x}_n - \cG \tilde{x}_n| \to \infty \text{ as } n \to \infty, & \forall \cG \in G\setminus G',
\end{array}
\r.
\end{equation*}
for some $\bar{x}_{\cG'} \in \R^d$. 
Using \lemref[lemA], we obtain a sequence $\{x_n\}$ such that 
\begin{equation*}
\l\{
\begin{array}{ll}
x_n - \cG' x_n =0, & \forall \cG' \in G',
\\
|x_n - \cG x_n| \to \infty \text{ as } n \to \infty, & \forall \cG \in G\setminus G',
\end{array}
\r.
\end{equation*}
and there exists $x_{\infty} \in \R^d$ such that
\[ x_n -\tilde{x}_n \to x_{\infty} \text{ as } n \to \infty. \]

Since $\norm[\varphi_n]_{H^1}$ is bounded, there exists $\psi \in H^1(\R^d)$ such that, after taking a subsequence, 
$e^{-it_n\Delta} \tau_{-x_n} \varphi_n \wto \psi/(\#G/\#G')$ in $H^1(\R^d)$ as $n \to \infty$. Here, we note that $\psi$ is $G'$-invariant since $\varphi_n$ is $G$-invariant and $x_n=\cG' x_n$ for all $\cG' \in G'$. We prove (5). Now, we have $w_n :=e^{-i\tilde{t}_n \Delta} \tau_{-\tilde{x}_n} \varphi_n \wto \tau_{x_\infty }  \psi/(\#G/\#G')$ in $H^1(\R^d)$ as $n \to \infty$. Since $e^{it\Delta}$ commutes with the convolution with $\chi_\rho$, we find that $e^{i\tilde{t}_n\Delta}( \chi_\rho *  \varphi_n )(\tilde{x}_n)= \chi_\rho *  w_n (0)$. By \eqref[3.16] and  \eqref[3.20], we have
\[
(4a)^{-\frac{d-2\Lambda}{2\Lambda}} \l( \frac{A}{4}\r)^{\frac{d}{2\Lambda}} 
\leq \l| \frac{\chi_\rho * \psi(-x_\infty)}{\#G/\#G'}\r|
\leq \kappa \rho^{\frac{d-2\Lambda}{2}}  \frac{\norm[ \psi]_{\dot{H}^{\Lambda}} }{\#G/\#G'}
\leq \kappa \rho^{\frac{d-2\Lambda}{2}}  \frac{\norm[\psi]_{H^{1}}}{\#G/\#G'}.
\]
Since we take $\rho= (4C a/A)^{\frac{1}{1-\Lambda}}$, we obtain the statement (5). We set $W_n :=\varphi_n -e^{it_n \Delta} \tau_{x_n} \psi$. Since $\varphi_n$ is $G$-invariant, we see that
\[ \varphi_n 
= \sum_{\cG \in G} \frac{ \cG \varphi_n}{\#G}
=\sum_{\cG \in G} \frac{ \cG (e^{it_n \Delta}  \tau_{x_n}\psi+W_n)}{\#G}
=\sum_{\cG \in G} \frac{ e^{it_n \Delta} \cG ( \tau_{x_n} \psi)}{\#G}
+\sum_{\cG \in G} \frac{ \cG W_n}{\#G}. \]
This is the statement \eqref[1.1]. Moreover, $W_n$ is $G'$-invariant since $\varphi_n$ and $\tau_{x_n}\psi$ are $G'$-invariant. 
We check the statement (1). The first statement $e^{-it_n\Delta} \tau_{- \cG x_n} \varphi_n \wto \cG \psi/(\#G/\#G')$ in $H^1(\R^d)$ follows from the definition of $\psi$ and the $G$-invariance of $\varphi_n$. We prove the second statement $e^{-it_n\Delta} \tau_{-\cG x_n} \tilde{W}_n  \wto 0$ in $H^1(\R^d)$ for all $\cG \in G$, where we recall that $\tilde{W_n}=\sum_{\cG \in G} \cG W_n/\#G$. 
Let $\{\cG_{k}\}_{k=1}^{\#G/\#G'}$ be the set of left coset representatives, that is, we have
\[ G= \sum_{k=1}^{\#G/\#G'} \cG_{k} G'. \] 
Since $W_n$ is $G'$-invariant, we find that
\[ \tilde{W_n}=\sum_{\cG \in G} \frac{\cG W_n}{\#G} = \sum_{k=1}^{\#G/\#G'} \frac{\cG_{k} W_n}{\#G/\#G'}.\]
Let $\cG=\cG_{l} \cG'$ for some  $l\in \{1,2,\cdots,\#G/\#G'\}$ and $\cG' \in G'$. Then, by the definition of $W_n$ and the first statement in (1), we obtain 
\begin{align*}
e^{-it_n\Delta} \tau_{-\cG x_n}  \tilde{W_n} 
&= e^{-it_n\Delta} \tau_{-\cG_{l} x_n} \varphi_n -  \sum_{k=1}^{\#G/\#G'} \frac{  \tau_{-\cG_{l} x_n +\cG_{k} x_n} \cG_{k} \psi}{\#G/\#G'}
\\
& \wto \frac{\cG_{l} \psi}{\#G/\#G'}- \frac{\cG_{l}\psi}{\#G/\#G'}=0,
\end{align*}
where we note that $|-\cG_{l} x_n +\cG_{k} x_n| =|-x_n +\cG_{l}^{-1} \cG_{k} x_n| \to \infty$ since $\cG_{l}^{-1} \cG_{k} \not\in G'$ if $k\neq l$. Thus, we get the second statement in (1). Next, we prove (4).
We set $\tilde{\psi}_n:=\sum_{\cG \in G} e^{it_n \Delta} \cG ( \tau_{x_n} \psi )/\#G$. 
We have
\begin{align*}
\norm[\varphi_n]_{\dot{H}^{\lambda}}^2
&=\norm[\tilde{\psi}_n+\tilde{W_n}]_{\dot{H}^{\lambda}}^2
=\norm[\tilde{\psi}_n]_{\dot{H}^{\lambda}}^2+\norm[\tilde{W_n}]_{\dot{H}^{\lambda}}^2 +2 \rbra[\tilde{\psi}_n,\tilde{W_n}]_{\dot{H}^{\lambda}}
\\
&=\norm[\psi]_{\dot{H}^{\lambda}}^2+\norm[\tilde{W_n}]_{\dot{H}^{\lambda}}^2 +2 \rbra[\tilde{\psi}_n,\varphi_n-\tilde{\psi}_n]_{\dot{H}^{\lambda}},
\end{align*}
where $\rbra[\cdot,\cdot]_{\dot{H}^{\lambda}}$ denotes the inner product in $\dot{H}^{\lambda}$.
We calculate $\rbra[\tilde{\psi}_n,\varphi_n-\tilde{\psi}_n]_{\dot{H}^{\lambda}}$. Since $\tau_{x_n} \psi$ is $G'$-invariant, we observe that 
\begin{align*}
\tilde{\psi}_n
= \sum_{\cG \in G} \frac{  e^{it_n \Delta}  \cG( \tau_{ x_n} \psi)}{\#G}
= \sum_{k=1}^{\#G/\#G'} \frac{  e^{it_n \Delta}  \cG_{k}( \tau_{ x_n} \psi)}{(\#G/\#G')}.
\end{align*}
By this observation, we have
\begin{align*}
&\rbra[\tilde{\psi}_n,\varphi_n-\tilde{\psi}_n]_{\dot{H}^{\lambda}}
\\
& = \rbra[\sum_{k=1}^{\#G/\#G'} \frac{  e^{it_n \Delta}  \cG_{k} ( \tau_{ x_n} \psi)}{\#G/\#G'}, \varphi_n-\sum_{l=1}^{\#G/\#G'}  \frac{ e^{it_n \Delta} \cG_{l}(\tau_{x_n} \psi)}{\#G/\#G'}]_{\dot{H}^{\lambda}}
\\
& = \frac{1}{(\#G/\#G')^2} \sum_{k=1}^{\#G/\#G'} \sum_{l=1}^{\#G/\#G'}  \rbra[  e^{it_n \Delta}  \cG_k( \tau_{ x_n} \psi),\varphi_n- e^{it_n \Delta}  \cG_l( \tau_{ x_n} \psi)]_{\dot{H}^{\lambda}}
\\
& = \frac{1}{(\#G/\#G')^2} \sum_{k,l=1}^{\#G/\#G'}  \l\{  \rbra[  e^{it_n \Delta}  \cG_k( \tau_{ x_n} \psi),\varphi_n]_{\dot{H}^{\lambda}} 
- \rbra[  e^{it_n \Delta}  \cG_k( \tau_{ x_n} \psi),e^{it_n \Delta}  \cG_l( \tau_{ x_n} \psi)]_{\dot{H}^{\lambda}} \r\}
\end{align*}

For the first term, we find that, for all $k \in \{1,2,\cdots,\#G/\#G'\}$,
\begin{equation}
\label{3.21}
\rbra[  e^{it_n \Delta}  \cG_k( \tau_{ x_n} \psi),\varphi_n]_{\dot{H}^{\lambda}}
= \rbra[  \psi,e^{-it_n \Delta}  \tau_{-x_n} \cG_k^{-1} \varphi_n]_{\dot{H}^{\lambda}}
\to \frac{ \norm[\psi]_{\dot{H}^{\lambda}}^2}{(\#G/\#G')}
\end{equation} 
since $\varphi_n$ is $G$-invariant and $e^{-it_n \Delta} \tau_{-x_n} \varphi_n$ weakly converges to $ \psi/(\#G/\#G')$ as $n \to \infty$ in $H^1(\R^d)$. 
%Author's attention: We use also $H^1$-weak convergence implies $\dot{H}^{\lambda}$-weak convergence.
For the second term, we obtain
\begin{align}
\label{3.22}
\rbra[  e^{it_n \Delta}  \cG_k( \tau_{ x_n} \psi),e^{it_n \Delta}  \cG_l( \tau_{ x_n} \psi)]_{\dot{H}^{\lambda}}
&=\rbra[ \psi,  \tau_{-x_n} \cG_k^{-1} \cG_l (\tau_{x_n} \psi )]_{\dot{H}^{\lambda}}
\\ \notag
&=\rbra[ \psi,  \tau_{-x_n +\cG_k^{-1} \cG_l x_n} \cG_k^{-1} \cG_l \psi )]_{\dot{H}^{\lambda}}
\\ \notag
&\to 
\l\{
\begin{array}{ll}
\norm[\psi]_{\dot{H}^{\lambda}}^2, & \text{if } k=l,
\\
0, & \text{if } k\neq l.
\end{array}
\r.
\end{align}
Combining \eqref[3.21] with \eqref[3.22], we get
\begin{align*}
\sum_{k,l=1}^{\#G/\#G'} & \l\{  \rbra[  e^{it_n \Delta}  \cG_k( \tau_{ x_n} \psi),\varphi_n]_{\dot{H}^{\lambda}} 
- \rbra[  e^{it_n \Delta}  \cG_k( \tau_{ x_n} \psi),e^{it_n \Delta}  \cG_l( \tau_{ x_n} \psi)]_{\dot{H}^{\lambda}} \r\}
\\
& \to  \sum_{k,l=1}^{\#G/\#G'}  \frac{ \norm[\psi]_{\dot{H}^{\lambda}}^2}{(\#G/\#G')}
- \sum_{k=1}^{\#G/\#G'}\norm[\psi]_{\dot{H}^{\lambda}}^2=0.
\end{align*}
This implies the first statement of (4).
We set
\[ f_n:= \l|\norm[\varphi_n]_{L^{p+1}}^{p+1}-\norm[\tilde{\psi}_n]_{L^{p+1}}^{p+1}-\norm[\tilde{W_n}]_{L^{p+1}}^{p+1}\r|. \]
We recall that, for every $P>1$ and $l\geq 2$, there exists a constant $C=C_{P,l}$ such that
\[ \l| \l|  \sum_{j=1}^l z_j \r|^{P} - \sum_{j=1}^l |z_j|^P \r| \leq C \sum_{j\neq k} |z_j||z_k|^{P-1},\]
for all $z_j \in \C$ for $1\leq j \leq l$. This implies that 
\[ ||z_1+z_2|^{p+1} -|z_1|^{p+1}- |z_2|^{p+1}| \leq C |z_1||z_2|(|z_1|^{p-1}+|z_2|^{p-1}). \]
Letting $g_n=|\tilde{\psi}_n|^{p-1}+|\tilde{W_n}|^{p-1}$, we get 
\begin{align*} 
f_n 
&\leq C \int_{\R^d} \l|\tilde{\psi}_n (x)\r| \l|\tilde{W_n}(x)\r| g_n(x) dx
\\
&\leq C \int_{\R^d} \l| \sum_{k=1}^{\#G/\#G'}  \frac{ e^{it_n\Delta} \cG_k( \tau_{ x_n} \psi)(x)}{\#G/\#G'}\r| \l|\tilde{W_n}(x)\r| g_n(x) dx
\\
&\leq C \sum_{k=1}^{\#G/\#G'} \int_{\R^d} \l| e^{it_n\Delta} \cG_k( \tau_{ x_n} \psi )(x)\r| \l|\tilde{W_n}(x)\r| g_n(x) dx
\\
&\leq C \sum_{k=1}^{\#G/\#G'} \int_{\R^d} \l| e^{it_n\Delta} \psi(x)\r| \l| \tau_{-x_n}  \cG_k^{-1} \tilde{W_n}(x)\r| \tau_{-x_n}  \cG_k^{-1} g_n(x) dx.
\end{align*}
Note that, by the triangle inequality and the Sobolev embedding, 
\begin{align*} 
\norm[\tau_{-x_n}  \cG_k^{-1} g_n ]_{L^{\frac{p+1}{p-1}}} 
&=\norm[g_n]_{L^{\frac{p+1}{p-1}}} 
\leq \norm[|\tilde{\psi}_n|^{p-1}]_{L^{\frac{p+1}{p-1}}} +\norm[|\tilde{W_n}|^{p-1}]_{L^{\frac{p+1}{p-1}}} 
\\
&= \norm[\tilde{\psi}_n]_{L^{p+1}}^{p-1} +\norm[\tilde{W_n}]_{L^{p+1}}^{p-1}
\cleq  \norm[\tilde{\psi}_n]_{H^1}^{p-1} +\norm[\tilde{W_n}]_{H^1}^{p-1}
\\
&\cleq  \norm[\psi]_{H^1}^{p-1} +\norm[W_n]_{H^1}^{p-1}
<C
 \end{align*}
where we use $\{W_n\}$ is bounded in $H^1$ since $\{\varphi_n\}$ is bounded. And $\norm[ \tau_{-x_n}  \cG_k^{-1} \tilde{W_n}]_{L^{p+1}}=\norm[\tilde{W_n}]_{L^{p+1}} < C$. Now, $\norm[e^{it_n \Delta}\psi]_{L^{p+1}}\to 0$ as $n \to \infty$ since $t_n \to \pm \infty$ (see \cite[Corollary 2.3.7 ]{Caz03}). Therefore, by the H\"{o}lder inequality, we get
\begin{align*}
f_n \cleq \norm[e^{it_n \Delta}\psi]_{L^{p+1}} \norm[ \tau_{-x_n}  \cG_k^{-1} \tilde{W_n}]_{L^{p+1}}\norm[ \tau_{-x_n}  \cG_k^{-1} g_n]_{L^{\frac{p+1}{p-1}}} \cleq \norm[e^{it_n\Delta}\psi]_{L^{p+1}} \to 0.
\end{align*}
This means the second statement of (4).

If $A=0$, then $\norm[e^{-it_n\Delta} \tau_{-x_n} \varphi_n]_{L^{p+1}}=\norm[e^{-it_n\Delta}  \varphi_n]_{L^{p+1}}\leq \norm[e^{-it\Delta}\varphi_n]_{L^\infty(\R: L^{p+1})} \to 0$. On the other hand, if $e^{-it_n\Delta} \tau_{-x_n} \varphi_n \wto \psi/(\#G/\#G')$ as $n \to \infty$ in $H^{1}(\R^d)$, then $e^{-it_n\Delta}  \tau_{-x_n} \varphi_n \to \psi/(\#G/\#G')$ as $n \to \infty$ in $L^{p+1}(B_R)$ for any $R>0$ by a compactness argument. Combining them, we get $\psi=0$.

\noindent{\bf Case2:} Since $\{\tilde{t}_n\}_{n\in \N}\subset \R$ is bounded, we may assume $\tilde{t}_n \to \bar{t} \in \R$ as $n \to \infty$ taking a subsequence. Let $t_n:=0$ for all $n$.  Minor modifications imply the statements, (1)--(3) and the first statement of (4). See the argument below (5.22) in \cite{FXC11}  for the second statement of (4) .
\end{proof}

\begin{proof}[Proof of {\propref[LPD]}]
We use an induction argument. By the boundedness of $\{\varphi_n\}$ in $H^1(\R^d)$, let $a:=\limsup_{n \to \infty} \norm[\varphi_n]_{H^1}<\infty$. Moreover, we define $A_1:=\limsup_{n \to \infty} \norm[e^{it \Delta}\varphi_n]_{L^{\infty}(\R:L^{p+1})}$. Note that the Sobolev embedding and the boundedness of $\{\varphi_n\}$ in $H^1(\R^d)$ give $A_1<\infty$. Taking a subsequence, we may assume that $A_1=\lim_{n \to \infty} \norm[e^{it \Delta}\varphi_n]_{L^{\infty}(\R:L^{p+1})}$. By \lemref[IS], we obtain a subsequence, which is still denoted by $\{\varphi_n\}_{n\in \N}$, a subgroup $G^1$ of $G$, $\psi^1 \in H_{G^1}^1$, sequences $\{t_n^1\}_{n\in \N}\subset \R$, $\{x_n^1\}_{n\in \N} \subset \R^d$, and $\{W_n^1\}_{n\in \N} \subset H_{G^1}^1$ such that 
\begin{equation*}
\varphi_n =  \tilde{\psi}_n^1  +\tilde{W}_n^1,
\end{equation*}
where $\tilde{\psi}_n^1:= e^{it_n^1 \Delta} \sum_{\cG \in G}  \cG (\tau_{x_n^1} \psi^1 )/\#G $ and $\tilde{W}_n^1 :=\sum_{\cG \in G}  \cG  W_n^1/\#G$ and the following statements hold. 
\begin{align*}
&e^{-it_n^1\Delta} \tau_{-\cG x_n^1} \varphi_n \wto \frac{\cG \psi^1}{\#G/\#G^1}, \text{ and } e^{-it_n^1\Delta} \tau_{-\cG x_n^1} \tilde{W}_n^1 \wto 0 \text{ in } H^1(\R^d) \text{ for all } \cG \in G.
\\
&t_n^1=0 \text{ or } t_n^1 \to \pm \infty \text{ as } n \to \infty.
\\
&\cG^1 x_n^1=x_n^1 \text{ for all } \cG^1 \in G^1 \text{ and } |x_n^1 - \cG x_n^1| \to \infty \text{ for all }\cG \in G\setminus G^1.
\\
&\norm[\varphi_n]_{\dot{H}^\lambda}^2 -  \norm[ \tilde{\psi}_n^1]_{\dot{H}^\lambda}^2 -  \norm[\tilde{W}_n^1]_{\dot{H}^\lambda}^2 \to 0 \text{ as } n \to \infty,
\text{ for all }0 \leq \lambda \leq 1.
\\ 
&\norm[\varphi_n]_{L^{p+1}}^{p+1} -  \norm[  \tilde{\psi}_n^1]_{L^{p+1}}^{p+1} -  \norm[\tilde{W}_n^1 ]_{L^{p+1}}^{p+1} \to 0 \text{ as } n \to \infty.
\\
&\norm[\psi^1]_{H^1} \geq \nu A_1^{\frac{d-2\Lambda^2}{2\Lambda(1-\Lambda)}} \l( \limsup_{n \to \infty} \norm[\varphi_n]_{H^1}\r)^{-\frac{d-2\Lambda}{2\Lambda(1-\Lambda)}},
\end{align*}
where $\nu>0$ is the constant appearing in \lemref[IS]. We call these properties $1$st properties. We note that $\tilde{W}_n^1$ is $G$-invariant and $A_2:=\limsup_{n\to \infty}\norm[\tilde{W}_n^1]_{L^\infty(\R:L^{p+1})}<\infty$. Taking a subsequence, we may assume that $A_2=\lim_{n\to \infty}\norm[\tilde{W}_n^1]_{L^\infty(\R:L^{p+1})}$. 
Applying \lemref[IS] as $\varphi_n=\tilde{W}_n^1$, we obtain a subsequence, which is still denoted by $\{\tilde{W}_n^1\}$, a subgroup $G^2$ of $G$, $\psi^2 \in H_{G^2}^1$, sequences $\{t_n^2\}_{n\in \N}\subset \R$, $\{x_n^2\}_{n\in \N} \subset \R^d$, and $\{W_n^2\}_{n\in \N} \subset H_{G^2}^1$ such that 
\begin{equation*}
\tilde{W}_n^1 = \tilde{\psi}_n^2  +\tilde{W}_n^2,
\end{equation*}
where $\tilde{\psi}_n^2:= e^{it_n^2 \Delta} \sum_{\cG \in G}  \cG (\tau_{x_n^2} \psi^2 )/\#G $ and $\tilde{W}_n^2 :=\sum_{\cG \in G}  \cG  W_n^2/\#G$ and the following statements hold. 
\begin{align*}
&e^{-it_n^2\Delta} \tau_{-\cG  x_n^2} \tilde{W}_n^1 \wto \frac{ \cG \psi^2}{\#G/\#G^2}, \text{ and } e^{-it_n^2\Delta} \tau_{-\cG x_n^2} \tilde{W}_n^2  \wto 0 \text{ in } H^1(\R^d) \text{ for all } \cG \in G.
\\
&t_n^2=0 \text{ or } t_n^2 \to \pm \infty \text{ as } n \to \infty.
\\
&\cG^2 x_n^2=x_n^2 \text{ for all } \cG^2 \in G^2 \text{ and } |x_n^2 - \cG x_n^2| \to \infty \text{ for all }\cG \in G\setminus G^2.
\\
&\norm[\tilde{W}_n^1]_{\dot{H}^\lambda}^2 -  \norm[ \tilde{\psi}_n^2]_{\dot{H}^\lambda}^2 -  \norm[\tilde{W}_n^2]_{\dot{H}^\lambda}^2 \to 0 \text{ as } n \to \infty,
\text{ for all }0 \leq \lambda \leq 1.
\\ 
&\norm[\tilde{W}_n^1]_{L^{p+1}}^{p+1} -  \norm[  \tilde{\psi}_n^2]_{L^{p+1}}^{p+1} -  \norm[\tilde{W}_n^2 ]_{L^{p+1}}^{p+1} \to 0 \text{ as } n \to \infty.
\\
&\norm[\psi^2]_{H^1} \geq \nu A_2^{\frac{d-2\Lambda^2}{2\Lambda(1-\Lambda)}} \l( \limsup_{n \to \infty} \norm[\tilde{W}_n^1]_{H^1}\r)^{-\frac{d-2\Lambda}{2\Lambda(1-\Lambda)}},
\end{align*}
where $\nu>0$ is the constant appearing in \lemref[IS]. Notice that the $1$st properties hold for the subsequence. Repeating this procedure, we obtain a subsequence, which is still denoted by $\{\tilde{W}_n^{j-1}\}$, a subgroup $G^j$ of $G$, $\psi^j \in H_{G^j}^1$, sequences $\{t_n^j\}_{n\in \N}\subset \R$, $\{x_n^j\}_{n\in \N} \subset \R^d$, and $\{W_n^j\}_{n\in \N} \subset H_{G^j}^1$ such that 
\begin{equation*}
\tilde{W}_n^{j-1} = \tilde{\psi}_n^j  +\tilde{W}_n^j,
\end{equation*}
where $\tilde{\psi}_n^j:= e^{it_n^j \Delta} \sum_{\cG \in G}  \cG (\tau_{x_n^j} \psi^j )/\#G $ and $\tilde{W}_n^j :=\sum_{\cG \in G}  \cG  W_n^j/\#G$ and the following statements hold. 
\begin{align*}
&e^{-it_n^j\Delta} \tau_{-\cG x_n^j} \tilde{W}_n^{j-1} \wto \frac{\cG \psi^j}{\#G/\#G^j}, \text{ and } e^{-it_n^j\Delta} \tau_{-\cG x_n^j} \tilde{W}_n^j \wto 0 \text{ in } H^1(\R^d) \text{ for all } \cG \in G.
\\
&t_n^j=0 \text{ or } t_n^j \to \pm \infty \text{ as } n \to \infty.
\\
&\cG^j x_n^j=x_n^j \text{ for all } \cG^j \in G^j \text{ and } |x_n^j - \cG x_n^j| \to \infty \text{ for all }\cG \in G\setminus G^j.
\\
&\norm[\tilde{W}_n^{j-1}]_{\dot{H}^\lambda}^2 -  \norm[ \tilde{\psi}_n^j]_{\dot{H}^\lambda}^2 -  \norm[\tilde{W}_n^j]_{\dot{H}^\lambda}^2 \to 0 \text{ as } n \to \infty,
\text{ for all }0 \leq \lambda \leq 1.
\\ 
&\norm[\tilde{W}_n^{j-1}]_{L^{p+1}}^{p+1} -  \norm[  \tilde{\psi}_n^j]_{L^{p+1}}^{p+1} -  \norm[\tilde{W}_n^j ]_{L^{p+1}}^{p+1} \to 0 \text{ as } n \to \infty.
\\
&\norm[\psi^j]_{H^1} \geq \nu A_j^{\frac{d-2\Lambda^2}{2\Lambda(1-\Lambda)}} \l( \limsup_{n \to \infty} \norm[\tilde{W}_n^{j-1}]_{H^1}\r)^{-\frac{d-2\Lambda}{2\Lambda(1-\Lambda)}},
\end{align*}
where $\nu>0$ is the constant appearing in \lemref[IS]. We call these properties $j$th properties. Here, we regard $\tilde{W}_n^0$ as $\varphi_n$. Combining $1$st, $2$nd, $\cdots$, and $J$th properties, we obtain the statements (1), (2), and (5). Note that the orthogonalities of the functionals $S_{\omega}$ and $K$ follows from the orthogonalities in norms. We prove (4). By the orthogonality of $H^1$-norm, we have
$\limsup_{n \to \infty} \norm[\tilde{W}_n^j]_{H^1} \leq  \limsup_{n \to \infty} \norm[\varphi_n]_{H^1} =a$ for all $j$.
Thus we see that
\begin{equation}
\label{3.25}
\norm[\psi^j]_{H^1} \geq \nu A_j^{\frac{d-2\Lambda^2}{2\Lambda(1-\Lambda)}} a^{-\frac{d-2\Lambda}{2\Lambda(1-\Lambda)}}.
\end{equation} 
By the orthogonality of $H^1$-norm and \lemref[lemA.3], we also have $\sum_{j=1}^{J}\norm[\psi^j]_{H^1} \leq a$. Taking the limit $J \to \infty$, we obtain $\sum_{j=1}^{\infty}\norm[\psi^j]_{H^1} \leq a$. Combining this with \eqref[3.25], we obtain
\[ \nu a^{-\frac{d-2\Lambda}{2\Lambda(1-\Lambda)}} \sum_{j=1}^{\infty} A_j^{\frac{d-2\Lambda^2}{2\Lambda(1-\Lambda)}} \leq \sum_{j=1}^{\infty}\norm[\psi^j]_{H^1} \leq a.\]
This gives us $\sum_{j=1}^{\infty} A_j^{\frac{d-2\Lambda^2}{2\Lambda(1-\Lambda)}}  \cleq a^{1+\frac{d-2\Lambda}{2\Lambda(1-\Lambda)}} < \infty$ so that $A_J\to 0$ as $J \to \infty$. 
By the H\"{o}lder inequality and the Strichartz estimate \eqref[3.1], we obtain
\begin{align*}
\limsup_{n \to \infty} \norm[e^{it\Delta} \tilde{W}_n^J]_{L^\alpha(\R:L^r)}
& \leq \limsup_{n \to \infty} \l( \norm[e^{it\Delta} \tilde{W}_n^J]_{L^q(\R:L^r)}^{\theta} \norm[e^{it\Delta} \tilde{W}_n^J]_{L^\infty(\R:L^r)}^{1-\theta} \r)
\\
& \cleq \limsup_{n \to \infty} \norm[\tilde{W}_n^J]_{H^1}^{\theta} \limsup_{n \to \infty} \norm[e^{it\Delta} \tilde{W}_n^J]_{L^\infty(\R:L^r)}^{1-\theta}
\\
&  \cleq a^{\theta} \limsup_{n \to \infty} \norm[e^{it\Delta} \tilde{W}_n^J]_{L^\infty(\R:L^r)}^{1-\theta}
\to 0
\end{align*}
as $J \to \infty$ where $\theta=2\{4-(d-2)(p-1)\}/\{d(p-1)^2\} \in (0,1)$. Thus, we obtain (4).
At last, we prove (3). By the statement (6) in \lemref[IS], $\psi^J=0$ for some $J$ implies that $\psi^j=0$ for $j\geq J$. Therefore, there exists $J \in \N \cup\{\infty\}$ such that $\psi^j \neq0$ for $j\leq J$ and $\psi^j=0$ for $j> J$. In the case of $J=1$, there is nothing to prove. We suppose $J \geq 2$. By $1$st properties, we have 
\[e^{-it_n^1\Delta} \tau_{- \cG x_n^1} \tilde{W}_n^1 \wto 0 \text{ in } H^1(\R^d) \text{ for all } \cG \in G \]
and 
\[e^{-it_n^2\Delta} \tau_{-\cG' x_n^2} \tilde{W}_n^1 \wto \frac{\cG' \psi^2}{\#G/\#G^2} \neq 0 \text{ in } H^1(\R^d) \text{ for all } \cG' \in G. \]
By \lemref[lemA.2] as $f_n=e^{-it_n^1\Delta} \tau_{- \cG x_n^1} \tilde{W}_n^1$, we get $|t_n^1-t_n^2|+|\cG x_n^1- \cG' x_n^2| \to \infty$ for all $\cG, \cG' \in G$. 
We suppose $J \geq 3$ and that the statement (3) is true for $1\leq j \neq h \leq \mu-1$ for some $\mu< J$. For $j \in \{1,2,\cdots,\mu-1 \}$, we have
\begin{align*}
\tilde{W}_n^{\mu-1}-\tilde{W}_n^{j-1}
&= \tilde{W}_n^{\mu-2} - \tilde{\psi}_n^{\mu-1} -\tilde{W}_n^{j-1}
 = \tilde{W}_n^{\mu-3} - \tilde{\psi}_n^{\mu-2} - \tilde{\psi}_n^{\mu-1} -\tilde{W}_n^{j-1}
\\
&= \cdots
=-\sum_{\nu=j}^{\mu-1}\tilde{\psi}_n^{\nu}.
\end{align*}
Therefore, we see that
\begin{align}  \label{3.26.0}
e^{-it_n^{j}\Delta} \tau_{-\cG x_n^j} \l( \tilde{W}_n^{\mu-1}-\tilde{W}_n^{j-1}\r)
&=- e^{-it_n^{j}\Delta} \tau_{-\cG x_n^j} \tilde{\psi}_n^{j} - \sum_{\nu=j+1}^{\mu-1}  e^{-it_n^{j}\Delta} \tau_{- \cG x_n^j} \tilde{\psi}_n^{\nu}.
\end{align}
Let $\{\cG_{k}^{(j)}\}_{k=1}^{\#G/\#G^j}$ be the set of left coset representatives and $\cG=\cG_{l}^{(j)} \cG^j$ for some  $l\in \{1,2,\cdots,\#G/\#G^j\}$ and $\cG^j \in G^j$.
For the first term of the right hand side in \eqref[3.26.0], we have 
\begin{align*}
e^{-it_n^{j}\Delta} \tau_{-\cG x_n^j} \tilde{\psi}_n^{j} 
&= \tau_{-\cG x_n^j} \sum_{k=1}^{\#G/\#G^j} \frac{\cG_k^{(j)} (\tau_{x_n^j} \psi^j)}{\#G/\#G^j}
\\
&=  \frac{\cG \psi^j}{\#G/\#G^j}+ \sum_{\substack{k=1\\ k\neq l}}^{\#G/\#G^j} \frac{ \tau_{-\cG_{l}^{(j)} x_n^j + \cG_{k}^{(j)}  x_n^j} \cG_k^{(j)}  \psi^j}{\#G/\#G^j} \wto \frac{\cG \psi^j}{\#G/\#G^j}.
\end{align*}
The second term of the right hand side in \eqref[3.26.0] weakly converges to $0$ in $H^1(\R^d)$ since we suppose that the statement (3) holds for  $1\leq j \neq h \leq \mu-1$. Since $ e^{-it_n^{j}\Delta} \tau_{-\cG x_n^j} \tilde{W}_n^{j-1} \wto  \cG \psi^j/(\#G/\#G^j)$, we have
\[e^{-it_n^j \Delta} \tau_{- \cG x_n^j} \tilde{W}_n^{\mu-1} \wto 0 \text{ in } H^1(\R^d) \text{ for all } \cG \in G, \]
and 
\[e^{-it_n^{\mu} \Delta} \tau_{-\cG' x_n^{\mu}} \tilde{W}_n^{\mu-1} \wto \frac{\cG' \psi^2}{\#G/\#G^2} \neq 0 \text{ in } H^1(\R^d) \text{ for all } \cG' \in G. \]
By \lemref[lemA.2] as $f_n=e^{-it_n^j \Delta} \tau_{- \cG x_n^j} \tilde{W}_n^{\mu-1}$, we get $|t_n^j-t_n^\mu|+|\cG x_n^j- \cG' x_n^\mu| \to \infty$ for all $\cG, \cG' \in G$. Therefore, we obtain the statement (3). This completes the proof. 
\end{proof}

\begin{lemma} 
\label{lem3.8}
Let $k$ be a nonnegative integer and $\varphi_{j} \in H_{G}^1(\R)$ for $j\in \{1,2,\cdots,k\}$ satisfy
\begin{align*} 
\begin{array}{ll} 
S_{\omega}(\sum_{j=1}^{k} \varphi_{j})\leq l_{\omega}^G-\delta, 
&
S_{\omega}(\sum_{j=1}^{k} \varphi_{j})\geq \sum_{j=1}^{k}  S_{\omega}(\varphi_{j}) - \eps, 
\\
K(\sum_{j=1}^{k} \varphi_{j})\geq -\eps, 
&
K(\sum_{j=1}^{k} \varphi_{j})\leq \sum_{j=1}^{k} K(\varphi_{j}) + \eps,
\end{array}
\end{align*}
for $\delta$, $\eps$ satisfying $2\eps<\delta$. Then we have $0\leq S_{\omega}(\varphi_{j}) < l_{\omega}^{G}$ and $K(\varphi_{j})\geq 0$ for all $j\in \{1,2,\cdots,k\}$. Namely, we see that $\varphi_j \in \scK_{G,\omega}^{+}$ for all $j\in \{1,2,\cdots,k\}$.
\end{lemma}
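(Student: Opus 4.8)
My plan is to base the whole proof on the elementary identity obtained by regrouping the definitions of $S_{\omega}$ and $K$: for every $\varphi\in H^{1}(\R^{d})$ and every $c\in\R$,
\begin{equation*}
S_{\omega}(\varphi)=c\,K(\varphi)+N_{c}(\varphi),\qquad
N_{c}(\varphi):=\Bigl(\tfrac12-\tfrac{2c}{d}\Bigr)\norm[\nabla\varphi]_{L^2}^{2}+\tfrac{\omega}{2}\norm[\varphi]_{L^2}^{2}+\tfrac{c(p-1)-1}{p+1}\norm[\varphi]_{L^{p+1}}^{p+1}.
\end{equation*}
Since $p>1+4/d$, the interval $\tfrac1{p-1}\le c\le\tfrac d4$ is nonempty; for such $c$ all three coefficients of $N_{c}$ are nonnegative, so $N_{c}\ge 0$, and in particular $K(\varphi)\ge 0$ forces $S_{\omega}(\varphi)\ge N_{c}(\varphi)\ge 0$. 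The crucial input I would establish is a \emph{relaxed variational bound}: if $\varphi\in H_{G}^{1}\setminus\{0\}$ and $K(\varphi)\le 0$, then $N_{c}(\varphi)\ge l_{\omega}^{G}$ for every $c\in[\tfrac1{p-1},\tfrac d4]$. To prove it, write $\varphi^{\lambda}(x)=e^{\lambda}\varphi(e^{\frac2d\lambda}x)$; then $\del_{\lambda}\bigl(S_{\omega}(\varphi^{\lambda})\bigr)=K(\varphi^{\lambda})$, which, exactly as in the proof of \lemref[lem2.4], is a difference of two exponentials in $\lambda$ and hence vanishes at a single point $\lambda_{0}$, with $\lambda_{0}\le 0$ precisely because $K(\varphi)\le 0$. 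Then $\varphi^{\lambda_{0}}\in H_{G}^{1}\setminus\{0\}$ (the rescaling commutes with the $G$-action) and $K(\varphi^{\lambda_{0}})=0$, so $l_{\omega}^{G}\le S_{\omega}(\varphi^{\lambda_{0}})=N_{c}(\varphi^{\lambda_{0}})$; finally $N_{c}(\varphi^{\lambda_{0}})\le N_{c}(\varphi)$ since $\lambda_{0}\le 0$ shrinks $\norm[\nabla\varphi^{\lambda_{0}}]_{L^2}^{2}=e^{\frac4d\lambda_{0}}\norm[\nabla\varphi]_{L^2}^{2}$ and $\norm[\varphi^{\lambda_{0}}]_{L^{p+1}}^{p+1}=e^{(p-1)\lambda_{0}}\norm[\varphi]_{L^{p+1}}^{p+1}$, whose coefficients in $N_{c}$ are $\ge 0$, while $\norm[\varphi^{\lambda_{0}}]_{L^2}=\norm[\varphi]_{L^2}$ is unchanged.

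Granting this, the lemma follows by a summing argument. Set $\Phi:=\sum_{j=1}^{k}\varphi_{j}$ and suppose, for contradiction, that $K(\varphi_{j_{0}})<0$ for some $j_{0}$ (so $\varphi_{j_{0}}\ne 0$). Fix $c\in[\tfrac1{p-1},\tfrac d4]$ with $c\le\tfrac12$ and sum the identity over $j$; feeding in the four hypotheses,
\begin{align*}
\sum_{j=1}^{k}N_{c}(\varphi_{j})
&=\sum_{j=1}^{k}S_{\omega}(\varphi_{j})-c\sum_{j=1}^{k}K(\varphi_{j})
\le\bigl(S_{\omega}(\Phi)+\eps\bigr)-c\bigl(K(\Phi)-\eps\bigr)\\
&=\bigl(S_{\omega}(\Phi)-cK(\Phi)\bigr)+(1+c)\eps
\le\bigl(l_{\omega}^{G}-\delta\bigr)+c\eps+(1+c)\eps
=l_{\omega}^{G}-\delta+(1+2c)\eps .
\end{align*}
On the other hand $N_{c}(\varphi_{j})\ge 0$ for all $j$ and $N_{c}(\varphi_{j_{0}})\ge l_{\omega}^{G}$ by the relaxed variational bound, whence $\sum_{j}N_{c}(\varphi_{j})\ge l_{\omega}^{G}$. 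Comparing the two estimates gives $\delta\le(1+2c)\eps\le 2\eps$, contradicting $2\eps<\delta$. Hence $K(\varphi_{j})\ge 0$ for every $j$; then $S_{\omega}(\varphi_{j})=cK(\varphi_{j})+N_{c}(\varphi_{j})\ge 0$, and since $\eps<\delta$,
\begin{equation*}
S_{\omega}(\varphi_{j})\le\sum_{l=1}^{k}S_{\omega}(\varphi_{l})\le S_{\omega}(\Phi)+\eps\le l_{\omega}^{G}-\delta+\eps<l_{\omega}^{G}.
\end{equation*}
Thus $0\le S_{\omega}(\varphi_{j})<l_{\omega}^{G}$ and $K(\varphi_{j})\ge 0$, i.e.\ $\varphi_{j}\in\scK_{G,\omega}^{+}$, for all $j$.

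The step I expect to be the main obstacle is the relaxed variational bound — i.e.\ that in the definition of $l_{\omega}^{G}$ the constraint $K=0$ may be replaced by $K\le 0$ at the price of replacing $S_{\omega}$ by $N_{c}$; the care there lies in the rescaling (verifying that $\varphi^{\lambda_{0}}$ stays in $H_{G}^{1}$, and that $\lambda_{0}$ is unique and $\le 0$). A second, more delicate point is the accounting of the $\eps$-errors: one needs a value of $c$ in the admissible range $[\tfrac1{p-1},\tfrac d4]$ with $1+2c\le 2$, which is available whenever $p\ge 3$ (in particular for $d\le 2$, where automatically $p>3$), but for $p$ close to $1+4/d$ one has to squeeze a little more out of the margin $l_{\omega}^{G}-S_{\omega}(\Phi)\ge\delta$ — for instance by first treating the case $S_{\omega}(\varphi_{j_{0}})\ge l_{\omega}^{G}$ directly and then refining the estimate on $\varphi_{j_{0}}$ via \lemref[lem2.4].
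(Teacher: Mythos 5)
Your argument is essentially the paper's own: the paper works with the single functional $J_{\omega}=S_{\omega}-\tfrac{d}{4}K$, which is exactly your $N_{d/4}$, establishes your ``relaxed variational bound'' in the form $l_{\omega}^{G}=\inf\{J_{\omega}(\varphi):\varphi\in H_{G}^{1}\setminus\{0\},\,K(\varphi)\leq 0\}$ by the same rescaling of $\varphi$ onto the Nehari manifold with $\lambda_{0}\leq 0$, and then runs the same summation/contradiction. The only place you go beyond the paper is the free parameter $c$ and the honest bookkeeping of the constant $(1+2c)$: you are right that with $c$ confined to $[\tfrac{1}{p-1},\tfrac{d}{4}]$ the contradiction with $2\eps<\delta$ closes as written only when some admissible $c$ is $\leq\tfrac12$, i.e.\ when $p\geq 3$. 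Be aware, however, that the paper's proof has the identical defect and simply suppresses it: with $c=\tfrac{d}{4}$ the displayed chain reads ``$\leq l_{\omega}^{G}-\delta+\eps+\eps$'' where the second error term is really $\tfrac{d}{4}\cdot 2\eps=\tfrac{d}{2}\eps$, so the printed argument also only closes for $d\leq 2$; your choice $c=\tfrac{1}{p-1}$ is in fact the sharper one. The discrepancy is immaterial where the lemma is invoked (\propref[prop3.11]): there $\eps$ is the $o_{n}(1)$ error of the profile decomposition while $\delta=l_{\omega}^{G}-S_{\omega}(\varphi_{1})$ is fixed, so one may simply replace the normalization $2\eps<\delta$ by $(1+\tfrac{2}{p-1})\eps<\delta$ and nothing downstream changes; the extra detour through \lemref[lem2.4] that you sketch at the end is not needed.
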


\begin{proof}
Let $J_{\omega}:=S_{\omega}-dK/4$. First, we prove the following equality.
\[ l_{\omega}^{G} = \inf \l\{ J_{\omega}(\varphi): \varphi \in H_{G}^1 \setminus \{0\}, K(\varphi) \leq 0 \r\}. \]
Let $l'$ denote the right hand side. 
By the definition of $l_{\omega}^{G}$ and $J_{\omega}$, we have
\[ l_{\omega}^{G} = \inf \l\{ J_{\omega}(\varphi): \varphi \in H_{G}^1 \setminus \{0\}, K(\varphi)=0 \r\}.\]
Therefore, we have $l_{\omega}^{G} \geq l'$. We prove $l_{\omega}^{G} \leq l'$. Take $\varphi \in H_{G}^1$ such that $K(\varphi)\leq 0$. Then, there exists $\lambda_0\leq 0$ such that $K(\varphi^{\lambda_0})=0$. Thus, we see that
\[ l_{\omega}^{G} \leq S_{\omega}(\varphi^{\lambda_0}) = J_{\omega}(\varphi^{\lambda_0}) \leq J_{\omega}(\varphi). \]
Taking the infimum for $\varphi \in H_{G}^1$ such that $K(\varphi) \leq 0$, we obtain $l_{\omega}^{G} \leq l'$. Thus, we have $l_{\omega}^{G}=l'$. Next, we prove the statement of the present lemma. We assume that there exists an $j \in \{ 1,2,\cdots, k\}$ such that $K(\varphi_{j})<0$. By $l_{\omega}^{G}=l'$ and the positivity of $J_{\omega}$, we obtain
\begin{align*}
l_{\omega}^G
& \leq \sum_{j=1}^{k} J_{\omega}(\varphi_{j})
\\
& = \sum_{j=1}^{k} \l(S_{\omega}(\varphi_{j}) -\frac{d}{4}K(\varphi_{j})\r)
\\
& = \sum_{j=1}^{k} S_{\omega}(\varphi_{j}) -   \sum_{j=1}^{k } \frac{d}{4} K(\varphi_{j})
\\
& \leq S_{\omega}\l(\sum_{j=1}^{k} \varphi_{j}\r) +\eps - \frac{d}{4} \l(K\l(\sum_{j=1}^{k} \varphi_{j}\r) - \eps \r)
\\
& \leq l_{\omega}^G-\delta +\eps  +\eps 
\\
& < l_{\omega}^G.
\end{align*}
This is a contradiction. So, $K(\varphi_{j})\geq 0$ for all $j\in \{1,2,\cdots,k\}$. Moreover, for any $j\in \{1,2,\cdots,k\}$, we have
\begin{align*}
S_{\omega}(\varphi_{j}) = J_{\omega}(\varphi_{j}) + \frac{d}{4} K(\varphi_{j}) \geq 0,
\end{align*}
and 
\begin{align*}
S_{\omega}(\varphi_{j}) \leq \sum_{j=1}^{k}  S_{\omega}( \varphi_{j})  \leq S_{\omega}\l( \sum_{j=1}^{k} \varphi_{j}\r) + \eps \leq l_{\omega}^G -\delta +\eps <l_{\omega}^G.
\end{align*}
This completes the proof.
\end{proof}

We collect lemmas for nonlinear profiles.

\begin{lemma} \label{lem3.9}
Let $\{x_n\}$ be a sequence, $\psi \in H^1$, and $U$ be a solution of \eqref[NLS] with the initial data $\psi$. Then, we have
\[ U_n (t)= e^{it\Delta} \tau_{x_n} \psi +i \int_{0}^{t} e^{i(t-s)\Delta} (|U_n(s)|^{p-1} U_n(s)) ds, \]
where $U_n (t,x) := U(t,x-x_n)$.
\end{lemma}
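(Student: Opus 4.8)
The plan is to deduce the identity from two elementary facts: the free Schr\"{o}dinger propagator commutes with spatial translations, and the nonlinearity $f(z)=|z|^{p-1}z$ acts pointwise in the space variable. Concretely, one can either verify directly that $U_n$ solves \eqref[NLS] with initial data $\tau_{x_n}\psi$ and then write down its Duhamel formula, or one can translate the Duhamel formula for $U$ itself; I would do the latter.

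First I would recall that, by the mild (Duhamel) formulation of \eqref[NLS] underlying the local well-posedness theory in $H^1$ (see \cite{GV79,Caz03}), the solution $U$ with initial data $\psi$ satisfies
\[ U(t) = e^{it\Delta}\psi + i\int_0^t e^{i(t-s)\Delta}\bigl(|U(s)|^{p-1}U(s)\bigr)\,ds \]
on its maximal existence interval, the integral being understood as a Bochner integral (in $H^1$, or in $H^{-1}$). Applying the bounded linear operator $\tau_{x_n}$ to both sides and passing it inside the integral, and using $\tau_{y}e^{it\Delta}=e^{it\Delta}\tau_{y}$ (both are Fourier multipliers, by $e^{-iy\cdot\xi}$ and $e^{-it|\xi|^2}$ respectively, and these commute), I obtain
\[ \tau_{x_n}U(t) = e^{it\Delta}\tau_{x_n}\psi + i\int_0^t e^{i(t-s)\Delta}\,\tau_{x_n}\bigl(|U(s)|^{p-1}U(s)\bigr)\,ds. \]

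It then remains only to identify the terms. Since $f(z)=|z|^{p-1}z$ is applied pointwise in $x$, we have $\tau_{x_n}\bigl(|U(s)|^{p-1}U(s)\bigr)(x)=|U(s,x-x_n)|^{p-1}U(s,x-x_n)=|U_n(s,x)|^{p-1}U_n(s,x)$, and by definition $\tau_{x_n}U(t)=U_n(t)$; substituting yields precisely the asserted formula. I do not anticipate any genuine obstacle here: the statement is essentially the translation invariance of \eqref[NLS], and the only points deserving a word of justification are the commutation $\tau_{y}e^{it\Delta}=e^{it\Delta}\tau_{y}$ and the fact that $U_n$ lies in the same class $C(I:H^1(\R^d))$ as $U$, so that every term above is well defined and the manipulation of the Bochner integral is legitimate.
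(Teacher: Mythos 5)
Your proof is correct and is exactly the argument the paper has in mind: the paper simply remarks that Lemma \ref{lem3.9} "follows from the space translation invariance of the equation," and your write-up — translating the Duhamel formula for $U$, commuting $\tau_{x_n}$ with $e^{it\Delta}$ as Fourier multipliers, and using that the nonlinearity acts pointwise — is just the detailed version of that one-line justification.
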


\lemref[lem3.9] follows from the space translation invariance of the equation \eqref[NLS].

\begin{lemma} \label{lem3.10.1}
Let $\{t_n\} $ satisfy $t_n \to \pm \infty$, $\{x_n\}$ be a sequence, $\psi \in H^1$, and $U$ be a solution of \eqref[NLS] satisfying
\[\norm[U_{\pm}(t) - e^{it\Delta} \psi]_{H^1} \to 0 \text{ as } t \to \pm \infty \]
Then, we have
\[ U_{\pm,n} (t)= e^{it\Delta} e^{it_n \Delta} \tau_{x_n} \psi +i \int_{0}^{t} e^{i(t-s)\Delta} (|U_{\pm,n}(s)|^{p-1} U_{\pm,n}(s)) ds + e_{\pm,n}(t), \]
where $U_{\pm,n} (t,x) := U_{\pm}(t+t_n,x-x_n)$ and $\norm[e_{\pm,n}]_{L^{\alpha}(\R:L^r)} \to 0 $ as $n \to \infty$. 
\end{lemma}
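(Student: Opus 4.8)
The plan is to reduce the identity to the ordinary Duhamel formula for $U_\pm$, exactly as in \lemref[lem3.9] but with the base point of the Duhamel integral placed at time $t_n$ rather than at $0$. The discrepancy between the translated profile at time $t_n$ and the free evolution $e^{it_n\Delta}\tau_{x_n}\psi$ will then be exactly the error term $e_{\pm,n}$, and its $L^\alpha(\R:L^r)$-smallness will be immediate from the Strichartz estimate \eqref[3.2] together with the scattering hypothesis.

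\textbf{Step 1 (the Duhamel identity).} In the situations where this lemma is applied, $U_\pm$ is the global solution produced by \lemref[lem3.4], so Duhamel's formula for $U_\pm$ between the times $t_n$ and $t+t_n$ is available; after the substitution $s\mapsto s+t_n$ in the integral it reads
\[
U_\pm(t+t_n)=e^{it\Delta}U_\pm(t_n)+i\int_0^t e^{i(t-s)\Delta}\bigl(|U_\pm(s+t_n)|^{p-1}U_\pm(s+t_n)\bigr)\,ds.
\]
I would then apply the spatial translation $\tau_{x_n}$, which commutes with $e^{it\Delta}$ and with the pointwise nonlinearity, and use $U_{\pm,n}(t)=\tau_{x_n}U_\pm(t+t_n)$ together with $\tau_{x_n}e^{it_n\Delta}\psi=e^{it_n\Delta}\tau_{x_n}\psi$, to rewrite this as
\[
U_{\pm,n}(t)=e^{it\Delta}e^{it_n\Delta}\tau_{x_n}\psi+i\int_0^t e^{i(t-s)\Delta}\bigl(|U_{\pm,n}(s)|^{p-1}U_{\pm,n}(s)\bigr)\,ds+e_{\pm,n}(t),
\]
where necessarily
\[
e_{\pm,n}(t):=e^{it\Delta}\,\tau_{x_n}\bigl(U_\pm(t_n)-e^{it_n\Delta}\psi\bigr).
\]

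\textbf{Step 2 (smallness of the error).} Since $e^{it\Delta}$ commutes with spatial translations, and both the $L^r$- and $\dot{H}^s$-norms are translation invariant, the factor $\tau_{x_n}$ drops out of every estimate; hence by \eqref[3.2],
\[
\norm[e_{\pm,n}]_{L^\alpha(\R:L^r)}=\norm[e^{it\Delta}\bigl(U_\pm(t_n)-e^{it_n\Delta}\psi\bigr)]_{L^\alpha(\R:L^r)}\cleq\norm[U_\pm(t_n)-e^{it_n\Delta}\psi]_{\dot{H}^s}.
\]
The hypothesis $1+4/d<p<1+4/(d-2)$ forces $0<s<1$, so $\norm[f]_{\dot{H}^s}\le\norm[f]_{H^1}$, and the right-hand side is bounded by $\norm[U_\pm(t_n)-e^{it_n\Delta}\psi]_{H^1}$, which tends to $0$ as $n\to\infty$ because $t_n\to\pm\infty$ and $\norm[U_\pm(t)-e^{it\Delta}\psi]_{H^1}\to0$ as $t\to\pm\infty$. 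This gives the claim.

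\textbf{Main obstacle.} There is really no genuine difficulty here: the lemma is a change-of-variables bookkeeping statement followed by a single application of Strichartz. The only points deserving care are carrying out the time shift of the Duhamel base point and the spatial shift by $x_n$ consistently, noting that the spatial translation parameter is invisible to every norm that appears so that $\{x_n\}$ plays no role in the estimate of $e_{\pm,n}$, and recording that $U_\pm$ is globally defined — guaranteed by \lemref[lem3.4], which also places $U_\pm(0)$ in $\scK_{G,\omega}^{+}$ — so that the Duhamel formula on $[0,t]$ is meaningful for every $t\in\R$.
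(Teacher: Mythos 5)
Your proof is correct and essentially identical to the paper's: both identify $e_{\pm,n}(t)=e^{it\Delta}\tau_{x_n}\bigl(U_\pm(t_n)-e^{it_n\Delta}\psi\bigr)$ via the Duhamel formula for the time-and-space translated solution, and then bound its $L^\alpha(\R:L^r)$ norm by $\norm[U_\pm(t_n)-e^{it_n\Delta}\psi]_{H^1}\to 0$ using the Strichartz estimate \eqref[3.2] and translation invariance.
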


\begin{proof}
Since $U_{\pm,n}$ is a solution of \eqref[NLS] with the initial data $\tau_{x_n}U_{\pm}(t_n)$ by the time and space translation invariance, we have
\begin{align*} 
e_{\pm,n}(t)
&=U_{\pm,n} (t)- e^{it\Delta} e^{it_n \Delta} \tau_{x_n} \psi -i \int_{0}^{t} e^{i(t-s)\Delta} (|U_{\pm,n}(s)|^{p-1} U_{\pm,n}(s)) ds 
\\
&= e^{it\Delta}\tau_{x_n} U_{\pm}(t_n) 
- e^{it\Delta} e^{it_n \Delta} \tau_{x_n} \psi. 
\end{align*}
By the Strichartz estimate, 
\[ \norm[e_{\pm,n}]_{L^{\alpha}(\R:L^r)}  \cleq \norm[U_{\pm}(t_n) 
- e^{it_n \Delta}  \psi]_{H^1} \to 0 \text{ as } n \to \infty.  \]
This completes the proof. 
\end{proof}

%%%%%%%%%%%%%%%%%%%%%%%%%%%%%%%%%%%%%%%%%%%%%%%%%%%%%%%%

\subsection{Construction of a critical element and Rigidity}

By the definition of $\bS_{\omega}^{G}$, we have $\bS_{\omega}^{G} \leq l_{\omega}^{G}$.
\lemref[lem2.2] and \lemref[SDS] give $\bS_{\omega}^{G}>0$.  We prove $\bS_{\omega}^{G}=\min\{m_{\omega}^{G}, l_{\omega}^{G}\} $ by contradiction argument so that we suppose $\bS_{\omega}^{G} < \min\{m_{\omega}^{G}, l_{\omega}^{G}\} $. 

\begin{proposition} \label{prop3.11}
Assume $\bS_{\omega}^{G} <  \min\{m_{\omega}, l_{\omega}^{G}\} $. Then, there exists a global solution $u^c$  to \eqref[NLS] with $G$-invariance such that $S_{\omega}(u^c)=\bS_{\omega}^{G}$ and $\norm[u^c]_{L^\alpha(\R: L^r)}=\infty$. 
\end{proposition}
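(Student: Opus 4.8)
\emph{Approach and the critical sequence.} The plan is to run the Kenig--Merle concentration--compactness/rigidity scheme by contradiction, using \propref[LPD] in place of the radial profile decomposition. Since $\bS_{\omega}^{G}<l_{\omega}^{G}$ is assumed and, by the definition of $\bS_{\omega}^{G}$, for every $n$ there is $\varphi_n\in\scK_{G,\omega}^{+}$ with $S_{\omega}(\varphi_n)<\bS_{\omega}^{G}+\tfrac1n$ whose solution $u_n$ does not scatter, I would first record: $u_n$ is global by \lemref[lem2.2] and \lemref[lem2.3]; non-scattering forces $\norm[u_n]_{L^\alpha(\R:L^r)}=\infty$ by \propref[prop3.2]; and the definition of $\bS_{\omega}^{G}$ forces $S_{\omega}(\varphi_n)\ge\bS_{\omega}^{G}$, so $S_{\omega}(\varphi_n)\to\bS_{\omega}^{G}$. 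By \lemref[lem2.2], $\{\varphi_n\}$ is bounded in $H_{G}^1$.

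\emph{The profiles sit below the threshold.} Next I would apply \propref[LPD] to $\{\varphi_n\}$, getting subgroups $G^j$, profiles $\psi^j\in H_{G^j}^1$, parameters $t_n^j,x_n^j$, remainders, and the decomposition $\varphi_n=\sum_{j\le J}\tilde\psi_n^j+(\text{remainder})$ with $\tilde\psi_n^j=e^{it_n^j\Delta}\sum_{\cG\in G}\cG(\tau_{x_n^j}\psi^j)/\#G$. Passing to a subsequence so that all relevant actions converge, and using that $S_{\omega}(\varphi_n)<l_{\omega}^{G}-\delta$ eventually, that $K(\varphi_n)\ge0$, and that $S_{\omega},K$ decouple along the decomposition (\propref[LPD](5)), \lemref[lem3.8] gives that each $\tilde\psi_n^j$ and each remainder lies in $\scK_{G,\omega}^{+}$ with nonnegative action; then \lemref[lem2.2] forces $S_{\omega}^{\infty}(\psi^j):=\lim_n S_{\omega}(\tilde\psi_n^j)>0$ whenever $\psi^j\ne0$, and the decoupling together with nonnegativity of the remainder action yields $\sum_j S_{\omega}^{\infty}(\psi^j)\le\bS_{\omega}^{G}$.

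\emph{The nonlinear profiles scatter --- where $m_{\omega}^{G}$ is used.} Fix $j$ with $\psi^j\ne0$ and put $N_j:=\#G/\#G^j$. Writing $G$ as a union of left cosets $\cG_k^{(j)}G^j$ and using the $G^j$-invariance of $\psi^j$ together with $x_n^j=\cG'x_n^j$ for $\cG'\in G^j$, one sees $\tilde\psi_n^j$ is a superposition of $N_j$ spatially separating copies of $N_j^{-1}\psi^j$ (rotated by the $\cG_k^{(j)}$ and time-shifted by $t_n^j$); hence, by decoupling of the functionals over the separating copies, $S_{\omega}^{\infty}(\psi^j)=N_j\,S_{\omega}(V^j)$, where $V^j$ is the $H_{G^j}^1$-solution of \eqref[NLS] with data $N_j^{-1}\psi^j$, which lies in $\scK_{G^j,\omega}^{+}$ (by \lemref[lem3.8] and the same decoupling) hence is global, and which is produced by \lemref[lem3.4] when $t_n^j\to\pm\infty$ --- its hypothesis being $\tfrac12\norm[\nabla(N_j^{-1}\psi^j)]_{L^2}^2+\tfrac{\omega}{2}M(N_j^{-1}\psi^j)=S_{\omega}^{\infty}(\psi^j)/N_j$, which falls below $l_{\omega}^{G^j}$ by the bound just mentioned. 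If $G^j=G$, then $\tilde\psi_n^j$ is itself $G$-invariant in $\scK_{G,\omega}^{+}$ with action $S_{\omega}^{\infty}(\psi^j)$, and when $S_{\omega}^{\infty}(\psi^j)<\bS_{\omega}^{G}$ the corresponding nonlinear profile scatters by the definition of $\bS_{\omega}^{G}$. If $G^j\subsetneq G$, the sequence $\{x_n^j\}$ witnesses ($\ast$) for $G^j$, so $N_j\bS_{\omega}^{G^j}\ge m_{\omega}^{G}>\bS_{\omega}^{G}\ge S_{\omega}^{\infty}(\psi^j)=N_j S_{\omega}(V^j)$, whence $S_{\omega}(V^j)<\bS_{\omega}^{G^j}\le l_{\omega}^{G^j}$; thus $V^j$ scatters by the definition of $\bS_{\omega}^{G^j}$, and since the $N_j$ translated/rotated copies cease to interact as $n\to\infty$, the associated nonlinear profile $U_n^j$ obeys $\norm[U_n^j]_{L^\alpha(\R:L^r)}\cleq N_j^{1/r}\norm[V^j]_{L^\alpha(\R:L^r)}<\infty$. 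I expect this to be the main obstacle: correctly realizing the profile with proper sub-symmetry $G^j\subsetneq G$ as $N_j$ non-interacting copies of a single $H_{G^j}^1$-solution $V^j$ with $S_{\omega}(V^j)=S_{\omega}^{\infty}(\psi^j)/N_j$, and exploiting precisely $\bS_{\omega}^{G}<m_{\omega}^{G}$ to push $V^j$ below the smaller-group threshold $\bS_{\omega}^{G^j}$ (the normalization $N_j^{-1}$ inside \lemref[lem3.4] is essential, since $l_{\omega}^{G^j}$ need not exceed $N_j\bS_{\omega}^{G}$).

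\emph{Dichotomy and the critical element.} If no $\psi^j$ is nonzero, $\varphi_n$ has $L^\alpha(\R:L^r)$-small free evolution, so $u_n$ scatters by \propref[SDS], a contradiction. If at least two $\psi^j$ are nonzero, the decoupling forces $S_{\omega}^{\infty}(\psi^j)<\bS_{\omega}^{G}$ for every $j$, so by the previous step all nonlinear profiles scatter with uniformly bounded $L^\alpha(\R:L^r)$-norms; choosing $J$ large so the remainder's free evolution is negligible (\propref[LPD](4)), and using the asymptotic orthogonality of the parameters (\propref[LPD](3)) together with \lemref[lem3.9] and \lemref[lem3.10.1] to display $\sum_{j\le J}U_n^j$ as an approximate solution of \eqref[NLS] with errors small in $L^{\beta'}(\R:L^{r'})$ and in the initial $L^\alpha$-sense, the Perturbation Lemma \lemref[Perturb] gives $\norm[u_n]_{L^\alpha(\R:L^r)}<\infty$ for large $n$, a contradiction. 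Hence exactly one $\psi^1$ is nonzero; if $G^1\subsetneq G$, or if $S_{\omega}^{\infty}(\psi^1)<\bS_{\omega}^{G}$, then $V^1$ (resp. the nonlinear profile) scatters and \lemref[Perturb] again makes $u_n$ scatter --- contradiction; therefore $G^1=G$ and $S_{\omega}^{\infty}(\psi^1)=\bS_{\omega}^{G}$, and the remainder is $L^\alpha$-negligible. The associated $G$-invariant nonlinear profile $u^c$ --- the solution with data $\tilde\psi_n^1$ if $t_n^1=0$, or the one from \lemref[lem3.4] if $t_n^1\to\pm\infty$ --- is then global and lies in $\scK_{G,\omega}^{+}$ with $S_{\omega}(u^c)=\bS_{\omega}^{G}$ by \lemref[lem2.2] and \lemref[lem2.3], and $\norm[u^c]_{L^\alpha(\R:L^r)}=\infty$, since otherwise \lemref[Perturb] would force $u_n$ to scatter for large $n$. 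This $u^c$ is the desired critical element.
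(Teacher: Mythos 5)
Your proposal is correct and follows essentially the same route as the paper: a Kenig--Merle contradiction argument built on \propref[LPD], with \lemref[lem3.8] placing all profiles in $\scK_{G,\omega}^{+}$, the hypothesis $\bS_{\omega}^{G}<m_{\omega}^{G}$ used exactly as in the paper to push a profile with proper sub-symmetry $G^j\subsetneq G$ (whose $\{x_n^j\}$ witnesses {\rm($\ast$)}) below the smaller threshold $\bS_{\omega}^{G^j}$ after the $(\#G/\#G^j)^{-1}$ normalization, nonlinear profiles from \lemref[lem3.4], and \lemref[Perturb] both to rule out the multi-profile case and to show $\norm[u^c]_{L^\alpha(\R:L^r)}=\infty$. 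Your case split on the number of nonzero profiles is just a reorganization of the paper's contradiction hypothesis, and the identification $G^1=G$ at the critical level is argued identically.
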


We call $u^c$ a critical element. 

\begin{proof}
By the definition of $\bS_{\omega}^{G}$ and the assumption of $\bS_{\omega}^{G} <  \min\{m_{\omega}, l_{\omega}^{G}\} $, there exists a sequence $\{\varphi_n\} \in \scK_{G,\omega}^{+}$ satisfying $\bS_{\omega}^{G} < S_{\omega}(\varphi_n) <\min\{m_{\omega}, l_{\omega}^{G}\}$, $S_{\omega}(\varphi_n) \searrow \bS_{\omega}^{G}$, and $u_n \not\in L^\alpha(\R: L^r(\R^d))$, where $u_n$ is a global solution with the initial data $\varphi_n$. Since $\{\varphi_n\}$ is bounded in $H^1(\R^d)$, we apply the linear profile decomposition with $G$-invariance, \propref[LPD], to the sequence $\{\varphi_n\}$ and then we obtain
\[ \varphi_n = \sum_{j=1}^J \tilde{\psi}_n^j  +  \tilde{W}_n^J, \]
where we recall that $\tilde{\psi}_n^j = \sum_{\cG \in G} e^{i t_n^j \Delta}\cG (\tau_{x_n^j} \psi^j )/\#G$ and $\tilde{W}_n^J = \sum_{\cG \in G} \cG  W_n^J/\#G$.
We also see that
\begin{align*}
S_{\omega}(\varphi_n) 
&=\sum_{j=1}^J S_{\omega}\l(  \tilde{\psi}_n^j  \r)+ S_{\omega}\l( \tilde{W}_n^J\r)+o(1),
\\
K(\varphi_n) 
&=\sum_{j=1}^J K\l(  \tilde{\psi}_n^j  \r)+ K\l( \tilde{W}_n^J\r)+o(1),
\end{align*} 
where $o(1) \to 0$ as $n \to \infty$. By these decompositions, we have
\begin{align*}
&S_{\omega}\l(\varphi_n\r)\leq l_{\omega}^{G}-\delta, 
\\
&S_{\omega}\l(\varphi_n\r)\geq \sum_{j=1}^{J}  S_{\omega}\l(\tilde{\psi}_n^j  \r) + S_{\omega}\l( \tilde{W}_n^J\r) - \eps, 
\\
&K\l(\varphi_n\r)\geq 0 > -\eps, 
\\
&K\l(\varphi_n\r)\leq \sum_{j=1}^{J} K\l(\tilde{\psi}_n^j  \r) +K\l( \tilde{W}_n^J\r)+ \eps,
\end{align*}
for large $n$ where $\delta=l_{\omega}^{G} - S_{\omega}(\varphi_1)$ and $\eps>0$ satisfies $2 \eps < \delta$. Therefore, \lemref[lem3.8] gives us that 
$ \tilde{\psi}_n^j \in \scK_{G,\omega}^{+}$ for all $j\in\{1,2,\cdots,J\}$ and $\tilde{W}_n^J  \in \scK_{G,\omega}^{+}$. Thus, for any $J$, we obtain
\[ \bS_{\omega}^{G} = \lim_{n \to \infty} S_{\omega}(\varphi_n) \geq \sum_{j=1}^{J} \limsup_{n \to \infty} S_{\omega}\l(   \tilde{\psi}_n^j  \r). \]
We prove $\bS_{\omega}^{G}= \limsup_{n \to \infty} S_{\omega}( \tilde{\psi}_n^j )$ for some $j$ by a contradiction argument. 
We assume that $\bS_{\omega}^{G}= \limsup_{n \to \infty} S_{\omega}( \tilde{\psi}_n^j )$ fails for all $j$. Namely, we assume that $ \limsup_{n \to \infty} S_{\omega}( \tilde{\psi}_n^j )<\bS_{\omega}^{G}$ for all $j$. By reordering, we can choose $0 \leq J_1 \leq J_2 \leq  J$ such that
\begin{align*}
\begin{array}{cc}
1\leq j \leq J_1: & \qquad t_n^j=0, \ \forall n 
\\
J_1+1 \leq j \leq J_2: & \qquad \lim_{n\to\infty }t_n^j=-\infty,  
\\
J_2 +1\leq j \leq J: & \qquad \lim_{n\to\infty }t_n^j=+\infty.
\end{array}
\end{align*} 
Above we are assuming that if $a>b$ then there is no $j$ such that $a \leq j \leq b$. Note that $J_1\in \{0,1\}$ by the orthogonality of the parameter $\{t_n^j\}$ (see \propref[LPD] (3)). We only consider the case $J_1=1$ since the case $J_1=0$ is easier. By the assumption of the contradiction argument and $t_n^1=0$, we have $0< \limsup_{n \to \infty} S_{\omega}( \sum_{\cG \in G}  \cG (\tau_{x_n^1} \psi^1 )/\#G)<\bS_{\omega}^{G}$. By the choice of $\{x_n^1\}$ and \lemref[lemA.3], 
\[  \limsup_{n \to \infty} S_{\omega}\l( \sum_{\cG \in G} \frac{ \cG (\tau_{x_n^1} \psi^1 )}{\#G}\r)=\frac{\#G}{\#G^1}  S_{\omega}\l( \frac{ \psi^1 }{\#G/\#G^1}\r) < \bS_{\omega}^{G} < m_{\omega}^{G} . \]
Therefore, $S_{\omega}\l( \psi^1 /(\#G/\#G^1)\r)< \bS_{\omega}^{G^1}.$ By the definition of $\bS_{\omega}^{G^1}$, the solution $U^1$ to \eqref[NLS] with the initial data $\psi^1/(\#G/\#G^1)$ belongs to $L^{\alpha}(\R:L^r(\R^d))$. 

For $j \in [J_1+1,J_2]$, we have
\begin{align*}  
m_{\omega}^{G}
&>\bS_{\omega}^{G} 
> \limsup_{n \to \infty} S_{\omega}\l(\tilde{\psi}_n^j \r)
\\
&= \limsup_{n \to \infty} \l( \frac{1}{2}\norm[ \sum_{\cG \in G} \frac{ \cG (\tau_{x_n^j} \psi^j )}{\#G}]_{\dot{H}^1}^2 
+\frac{\omega}{2} \norm[\sum_{\cG \in G} \frac{ \cG (\tau_{x_n^j} \psi^j )}{\#G}]_{L^2}^2\r)
\\
&\qquad - \frac{1}{p+1} \liminf_{n \to \infty} \norm[e^{i t_n^j \Delta}  \sum_{\cG \in G} \frac{ \cG (\tau_{x_n^j} \psi^j )}{\#G}]_{L^{p+1}}^{p+1}
\\
&=\frac{\#G}{\#G^j} \l( \frac{1}{2}\norm[  \frac{ \psi^j }{\#G/\#G^j}]_{\dot{H}^1}^2 
+\frac{\omega}{2} \norm[ \frac{ \psi^j }{\#G/\#G^j}]_{L^2}^2 \r), 
\end{align*}
where we use $\norm[e^{it_n \Delta} \phi]_{L^{p+1}} \to 0$ as $n \to \infty$ (see \cite[Corollary 2.3.7]{Caz03}) and \lemref[lemA.3].  
This inequality implies that $\psi^j/(\#G/\#G^j)$ satisfies the assumption of \lemref[lem3.4] as $G=G^j$, where we note that $\bS_{\omega}^{G^j} \leq l_{\omega}^{G^j}$. Thus, we obtain the global solution $U_{-}^j$ to \eqref[NLS] such that $U_{-}^{j}(0) \in \scK_{G,\omega}^{+}$ and
\[ \norm[ U_{-}^j(t) - e^{it\Delta} \frac{ \psi^j }{\#G/\#G^j}]_{H^1} \to 0 \text{ as } t \to - \infty. \]
Moreover, $U_{-}^j$  belongs to $L^\alpha(\R:L^r(\R^d))$ by the definition of $\bS_{\omega}^{G^j}$ since we have
\[S_{\omega}(U_{-}^j)=\frac{1}{2}\norm[  \frac{ \psi^j }{\#G/\#G^j}]_{\dot{H}^1}^2 
+\frac{\omega}{2} \norm[ \frac{ \psi^j }{\#G/\#G^j}]_{L^2}^2< \bS_{\omega}^{G^j}.\]
For $j \in [J_2+1,J]$, by the similar argument, we obtain a global solution $U_{+}^j$ such that
$U_{+}^{j}(0) \in \scK_{G,\omega}^{+}$, $U_{+}^j \in L^\alpha(\R:L^r(\R^d))$, and
\[ \norm[ U_{+}^j(t) - e^{it\Delta} \frac{ \psi^j }{\#G/\#G^j}]_{H^1} \to 0 \text{ as } t \to \infty. \]
We define 
\begin{equation*}
U^j := \l\{
\begin{array}{ll}
U^1, & \text{if } j=1,
\\
U_{-}^j, & \text{if } j \in [2,J_2],
\\
U_{+}^j, & \text{if } j \in [J_2 + 1,J],
\end{array}
\r.
\text{ and } U_n^j (t,x):= U^j(t+t_n^j, x-x_n^j).
\end{equation*}
Moreover, we define 
\begin{align*}
u_n^J :=\sum_{j=1}^{J} \sum_{k=1}^{\#G/\#G^j} \cG_k^{(j)} U_n^j,
\end{align*}
where $\{\cG_{k}^{(j)}\}_{k=1}^{\#G/\#G^j}$ be the set of left coset representatives.
Then $u_n^J$ satisfies 
\begin{align*}
&i \del_t u_n^J + \Delta u_n^J + \l|u_n^J\r|^{p-1}u_n^J = e_n^J,
\\
&e_n^J = \l|u_n^J\r|^{p-1}u_n^J -\sum_{j=1}^{J} \sum_{k=1}^{\#G/\#G^j}  \l|\cG_k^{(j)} U_n^j\r|^{p-1} \cG_k^{(j)} U_n^j.
\end{align*}
Moreover, we have 
\[u_n(0)-u_n^J(0) = \sum_{j=1}^{J} \sum_{k=1}^{\#G/\#G^j} \cG_k^{(j)} \l( e^{it_n^j \Delta} \tau_{x_n^j} \frac{\psi^j}{\#G/\#G^j} -\tau_{x_n^j} U^j(t_n^j) \r)+ \tilde{W}_n^J. \]
To apply the perturbation lemma, \lemref[Perturb], we prove the following inequalities hold for large $n$.
\begin{align}
\label{3.26}
&\norm[u_n^J]_{L^{\alpha}(\R;L^r)} \leq A,
\\
\label{3.27}
& \norm[e_n^J ]_{L^{\beta'}(R:L^{r'})} \leq \eps(A),
\\
\label{3.28}
&\norm[e^{it\Delta}( u_n(0)-u_n^J(0))]_{L^\alpha(\R:L^r)} \leq \eps(A).
\end{align}
We prove \eqref[3.26]. By the definition of $U_n^j$, we have
\begin{align*}
u_n^J (t)
&= \sum_{j=1}^{J} \sum_{k=1}^{\#G/\#G^j} \cG_k^{(j)} U_n^j(t)
\\
&= \sum_{k=1}^{\#G/\#G^1} \cG_k^{(1)} (\tau_{x_n^1} U^1(t)) + \sum_{j=2}^{J_2} \sum_{k=1}^{\#G/\#G^j} \cG_k^{(j)} (\tau_{x_n^j} U_{-}^j(t+t_n^j)) + \sum_{j=J_2+1}^{J} \sum_{k=1}^{\#G/\#G^j} \cG_k^{(j)}  (\tau_{x_n^j} U_{+}^j(t+t_n^j)).
\end{align*}
By \lemref[lemA.4], we obtain
\[ \limsup_{n \to \infty} \norm[u_n^J]_{L^\alpha(\R:L^r)} \leq 2  \sum_{j=1}^{J} \sum_{k=1}^{\#G/\#G^j} \norm[\cG_k^{(j)}  U^j]_{L^\alpha(\R:L^r)} = 2  \sum_{j=1}^{J} \frac{\#G}{\#G^j} \norm[U^j]_{L^\alpha(\R:L^r)}, \]
where we use $|t_n^j-t_n^h|\to \infty$ or $|\cG x_n^j - \cG' x_n^h| \to \infty $ for all $\cG,\cG'\in G$ if $j\neq h$ and also use $|\cG_k^{(j)} x_n^j -\cG_l^{(j)}  x_n^j| \to \infty$ if $k\neq l$. 
By (5) in \propref[LPD] and \lemref[lemA.3], we have
\[ \norm[\varphi_n]_{H^1}^2=\sum_{j=1}^{J} \frac{\#G}{\#G^j} \norm[\frac{\psi^j}{\#G/\#G^j}]_{H^1}^2+\norm[\tilde{W}_n^J]_{H^1}^2 + o_n(1).\]
Therefore, $\sup_{n \in \N} \norm[\varphi_n]_{H^1}^2 < \infty$ implies that there exists a finite set $\scJ$ such that $\norm[\psi^j/(\#G/\#G^j)]_{H^1} < \eps_{sd}$ for $j \not\in \scJ$, where $\eps_{sd}$ is a constant appearing in \propref[SDS]. Thus, we get 
\begin{align*}
\limsup_{n \to \infty} \norm[u_n^J]_{L^\alpha(\R:L^r)} 
&\cleq  \sum_{j=1}^{J}\norm[U^j]_{L^\alpha(\R:L^r)}
\\
&=  \sum_{j\in \scJ}\norm[U^j]_{L^\alpha(\R:L^r)}+ \sum_{j \not\in \scJ}\norm[U^j]_{L^\alpha(\R:L^r)}
\\
& \cleq  \sum_{j\in \scJ}\norm[U^j]_{L^\alpha(\R:L^r)}+\sum_{j \not\in \scJ} \norm[\frac{\psi^j}{\#G/\#G^j}]_{H^1}
\\
& \cleq  \sum_{j\in \scJ}\norm[U^j]_{L^\alpha(\R:L^r)}+ \limsup_{n \to \infty} \norm[\varphi_n]_{H^1}
\\
& \leq  A < \infty.
\end{align*}
We prove \eqref[3.28]. By the triangle inequality, the Strichartz estimate, the definition of $U^j$, (4) in \propref[LPD], and Lemmas \ref{lem3.9} and \ref{lem3.10.1}, we have
\begin{align*} 
&\norm[e^{it\Delta}( u_n(0)-u_n^J(0))]_{L^\alpha(\R:L^r)}
\\
&\leq \sum_{j=1}^{J} \sum_{k=1}^{\#G/\#G^j} \norm[e^{it\Delta}\l( \tau_{x_n^j} U^j(t_n^j) -  e^{it_n^j \Delta} \tau_{x_n^j} \frac{\psi^j}{\#G/\#G^j}\r) ]_{L^\alpha(\R:L^r)}
 +\norm[e^{it\Delta}\tilde{W}_n^J]_{L^\alpha(\R:L^r)}
\\
&\leq \sum_{j=1}^{J} \sum_{k=1}^{\#G/\#G^j} \norm[\tau_{x_n^j} U^j(t_n^j) -  e^{it_n^j \Delta} \tau_{x_n^j} \frac{\psi^j}{\#G/\#G^j} ]_{H^1}
 +\norm[e^{it\Delta}\tilde{W}_n^J]_{L^\alpha(\R:L^r)}
\\
& \leq \eps \leq \eps(A),
\end{align*}
for large $n$ and $J$. 
We prove \eqref[3.27]. In general, the following inequality holds. 
\[ \l| \l| \sum_{j=1}^J z^j \r|^{p-1} \sum_{j=1}^J z^j -\sum_{j=1}^J \l| z^j \r|^{p-1} z^j \r| \leq C_J \sum_{1\leq j\neq h \leq J} |z^j|^{p-1} |z^h|. \]
This implies that 
\[ \norm[e_n^J ]_{L^{\beta'}(\R:L^{r'})} \leq C_J \sum_{1\leq j\neq h \leq J} \norm[|U_n^j|^{p-1} |U_n^h|]_{L^{\beta'}(\R:L^{r'})}. \]
An approximation argument and $|t_n^j-t_n^h|\to \infty$ or $|\cG x_n^j - \cG' x_n^h| \to \infty $ for all $\cG,\cG'\in G$ if $j\neq h$ and also use $|\cG_k^{(j)} x_n^j -\cG_l^{(j)} x_n^j| \to \infty$ if $k\neq l$ give us $\norm[|U_n^j|^{p-1} |U_n^h|]_{L^{\beta'}(\R:L^{r'})} \to 0$ as $n \to \infty$. Thus, we obtain \eqref[3.27]. Applying \lemref[Perturb], we conclude that $u_n$ scatters. However, this contradicts the definition of $\{\varphi_n\}$. Therefore, there exists $j $ such that $\bS_{\omega}^{G}= \limsup_{n \to \infty} S_{\omega}(\tilde{\psi}_n^j)$. We may assume $j=1$. The linear profile decomposition as $J=1$ and $\tilde{W}_n^1 \in \scK_{G,\omega}^{+}$ imply $\norm[\tilde{W}_n^1]_{L^\infty(\R:H^1)} \to 0$ as $n \to \infty$ by \lemref[lem2.2]. 
Therefore, we see that
\begin{align*}
&\varphi_n = \tilde{\psi}_n^1 + \tilde{W}_n^1, 
\\
& \norm[\tilde{W}_n^1]_{L^\infty(\R:H^1)} \to 0,
\\
& \bS_{\omega}^{G} = \lim_{n \to \infty} S_{\omega}(\tilde{\psi}_n^1).
\end{align*}
We assume that there exists $G^1\subsetneq G$ such that $x_n^1 =\cG^1 x_n^1$ for all $\cG^1 \in G^1$ and $|x_n^1 - \cG x_n^1| \to \infty$ for all $\cG \in G\setminus G^1$. Let $U$ be a global solution of \eqref[NLS] with the initial data $\psi^1/(\#G/\#G^1)$ if $t_n^1=0$ or the final data $\psi^1/(\#G/\#G^1)$ if $|t_n^1| \to \infty$. Then,  by the definition of $\bS_{\omega}^{G^1}$, $U$ belongs to $L^{\alpha}(\R:L^r(\R^d))$ since we have, by \lemref[lemA.3], 
\[ \lim_{n \to \infty} S_{\omega}(\tilde{\psi}_n^1) =\lim_{n \to \infty} \frac{\#G}{\#G^1} S_{\omega}\l( e^{it_n^1 \Delta} \frac{\psi^1}{\#G/\#G^1}\r)= \bS_{\omega}^{G} <m_{\omega}^{G} \leq \frac{\#G}{\#G^1}  \bS_{\omega}^{G^1} . \]
By \lemref[Perturb] again, this contradicts that $u_n$ does not belong to $L^{\alpha}(\R:L^r(\R^d))$. Thus, $G^1=G$. This means that $\psi^1$ and $W_n^1$ are $G$-invariant, $x_n^1 = \cG x_n^1$ for all $\cG \in G$, and we see that
\[ \varphi_n = e^{it_n^1 \Delta} \tau_{x_n^1} \psi^1 + W_n^1. \]
Let $u^c$ be a global solution of \eqref[NLS] with the initial data $\psi^1$ if $t_n^1=0$ or the final data $\psi^1$ if $|t_n^1| \to \infty$. Then, $u^c$ is $G$-invariant. We prove $\norm[u^c]_{L^\alpha(\R:L^r)} = \infty$. Suppose that $\norm[u^c]_{L^\alpha(\R:L^r)} < \infty$.
We observe that $\varphi_n - \tau_{x_n^1}u^c(t_n^1)= e^{it_n^1 \Delta} \tau_{x_n^1}\psi^1 - \tau_{x_n^1}u^c(t_n^1)+W_n^1$, so that we have
\[ \norm[e^{it\Delta} \l( \varphi_n - \tau_{x_n^1}u^c(t_n^1)\r)]_{L^{\alpha}(\R;L^r)} \to 0 \text{ as } n \to \infty. \]
By \lemref[Perturb], we see that $u_n \in L^\alpha(\R:L^r(\R))$ for large $n$, which is absurd. Thus, we get $\norm[u^c]_{L^\alpha(\R:L^r)} = \infty$. Moreover, we have $S_{\omega}(u^c)=\lim_{n \to \infty}S_{\omega}(e^{it_n^1} \psi^1)=\bS_{\omega}^{G}$. Thus, we get a critical element $u^c$. 
\end{proof}

We say that the solution $u$ is a forward critical element if $u$ is a critical element and satisfies $\norm[u]_{L^{\alpha}([0,\infty):L^r)}=\infty$. In the same manner, we define a backward critical element. We only prove extinction of the forward critical element since that of the backward critical element can be obtained by the similar argument based on time reversibility. The extinction contradicts \propref[prop3.11].

\begin{lemma} \label{lem3.12}
Let $u$ be a forward critical element. 
There exists a continuous function $x:[0,\infty) \to \R^d$ such that $\cG x(t) = x(t)$ for all $\cG \in G$ and $\{u(t,\cdot-x(t)):t\in [0,\infty) \}$ is precompact in $H^1(\R^d)$. 
\end{lemma}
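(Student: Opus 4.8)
The plan is to establish precompactness of $\{u(t)\}_{t\ge 0}$ in $H^1(\R^d)$ modulo translations by the fixed subspace $V_G:=\{y\in\R^d:\cG y=y\ \forall\,\cG\in G\}$, and then to promote the translation parameter to a continuous $V_G$-valued function. Since $\{u(t):t\in[0,T]\}$ is compact for each $T$ (continuous image of a compact set), the first step reduces to the claim that every sequence $t_n\to\infty$ has a subsequence along which $u(t_n,\cdot-x_n)$ converges in $H^1$ for suitable $x_n\in V_G$. To prove this I would fix $t_n\to\infty$, note that $u(t_n)\in\scK_{G,\omega}^{+}$ by \lemref[lem2.3], that $S_\omega(u(t_n))=\bS_\omega^{G}$ by conservation, and that $u(\cdot+t_n)$ --- the solution with data $u(t_n)$ --- is global on $\R$ (as $u$ is a critical element, hence global in both directions) and satisfies $\|u(\cdot+t_n)\|_{L^\alpha([0,\infty):L^r)}=\|u\|_{L^\alpha([t_n,\infty):L^r)}=\infty$ because $u$ is a forward critical element while $\|u\|_{L^\alpha([0,t_n]:L^r)}<\infty$. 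Then I would apply \propref[LPD] to $\{u(t_n)\}$.

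The heart of the matter is to show that this profile decomposition is trivial: exactly one nonzero profile $\tilde\psi_n^{1}$, with $t_n^{1}=0$, defect group $G^{1}=G$, and $\|\tilde W_n^{1}\|_{H^1}\to 0$. For this I would follow the argument in the proof of \propref[prop3.11]: \lemref[lem3.8] places every profile and remainder in $\scK_{G,\omega}^{+}$; if there were two or more nonzero profiles (each then of positive action strictly below $\bS_\omega^{G}$, by \lemref[lem2.2]), building the approximate solution from the associated nonlinear profiles and invoking the perturbation \lemref[Perturb] on $[0,\infty)$ would yield $u(\cdot+t_n)\in L^\alpha([0,\infty):L^r)$, contradicting the above; the same mechanism, together with $\bS_\omega^{G}<m_\omega^{G}$ and the fact that $G^1$ satisfies $(\ast)$, forces $G^{1}=G$. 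The one point not already in \propref[prop3.11] is the exclusion of $|t_n^{1}|\to\infty$: if $t_n^{1}\to+\infty$, the nonlinear profile from \lemref[lem3.4] scatters forward, hence has small $L^\alpha([t_n^1,\infty):L^r)$-norm, and \lemref[Perturb] on $[0,\infty)$ again gives $u(\cdot+t_n)\in L^\alpha([0,\infty):L^r)$; if $t_n^{1}\to-\infty$, the nonlinear profile scatters backward, hence has small $L^\alpha((-\infty,t_n^1]:L^r)$-norm, and the time-reversed version of \lemref[Perturb] on $(-\infty,0]$ gives $\|u\|_{L^\alpha((-\infty,t_n]:L^r)}\to 0$, impossible since $\|u\|_{L^\alpha((-\infty,t_n]:L^r)}\ge\|u\|_{L^\alpha([0,t_n]:L^r)}\to\infty$. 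With the decomposition thus reduced, $\psi^{1}\in H^1_G$, $x_n^{1}\in V_G$, $u(t_n)=\tau_{x_n^{1}}\psi^{1}+W_n^{1}$ with $W_n^{1}\in\scK_{G,\omega}^{+}$, and from $S_\omega(u(t_n))=S_\omega(\psi^{1})+S_\omega(W_n^{1})+o(1)$ with $S_\omega(\psi^{1})=\bS_\omega^{G}=S_\omega(u(t_n))$ and $K(W_n^1)\ge 0$, \lemref[lem2.2] gives $\|W_n^{1}\|_{H^1}\to 0$; hence $u(t_n,\cdot+x_n^{1})\to\psi^{1}$ in $H^1$, which is the claim with $x_n:=-x_n^{1}\in V_G$.

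Finally, to obtain a continuous $x(\cdot)$ I would use the standard upgrade: from precompactness modulo $V_G$-translations, the fact that $u(t)\neq 0$ for every $t$ (otherwise $u\equiv 0$ by uniqueness), the continuity of $t\mapsto u(t)$ in $H^1$, and the absence of nonzero periods for nonzero $H^1(\R^d)$ functions, one produces a continuous $x:[0,\infty)\to V_G$ with $\{u(t,\cdot-x(t)):t\ge 0\}$ precompact; in particular $\cG x(t)=x(t)$ for all $\cG\in G$. I expect the main obstacle to be exactly the exclusion of the escaping profiles $|t_n^{1}|\to\infty$: this is where one must exploit that $u$ is a \emph{forward} critical element that is moreover global backward in time, so that the perturbation lemma can be fed with an approximate solution whose relevant (forward, resp. backward) Strichartz norm is small rather than merely bounded. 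The remaining ingredients --- the orthogonality bookkeeping, the coset sums, and the continuous selection --- are standard and parallel \propref[prop3.11] and the radial Kenig--Merle scheme, respectively.
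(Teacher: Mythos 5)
Your proposal is correct and follows exactly the route the paper intends: the paper's proof of this lemma is a one-line reference to the argument of Duyckaerts--Holmer--Roudenko, ``noting $u$ is $G$-invariant and $\{x_n^1\}$ \dots satisfies $\cG x_n^1=x_n^1$,'' and your argument is precisely that scheme carried out with the paper's own tools --- \propref[LPD], \lemref[lem3.8], \lemref[lem3.4], and \lemref[Perturb] --- including the reduction to a single profile with $G^1=G$ and the exclusion of the escaping cases $t_n^1\to\pm\infty$ via forward criticality together with the finiteness of the $L^{\alpha}([0,t_n]:L^r)$ norm. No gaps worth flagging.
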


The above lemma can be obtained by the same argument as in \cite[Proposition 3.2]{DHR08} noting $u$ is $G$-invariant and $\{x_n^1\}$, which appears in the profile decomposition, satisfies $\cG x_n^1 =x_n^1$ for all $\cG \in G$. 

\begin{lemma} \label{lem3.13}
Let $u$ be a solution to \eqref[NLS] satisfying that there exists a continuous function $x:[0,\infty) \to \R^d$ such that $\{u(t,\cdot-x(t)):t\in [0,\infty) \}$ is precompact in $H^1(\R^d)$. Then, for any $\eps>0$, there exists $R=R(\eps)>0$ such that 
\begin{equation} \label{3.30}
\int_{|x+x(t)|>R}  |\nabla u(t,x)|^2  + |u(t,x)|^2 + |u(t,x)|^{p+1} dx <\eps \text{ for any } t\in [0,\infty).
\end{equation}
\end{lemma}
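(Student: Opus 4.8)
The plan is to unwind the definition of precompactness and combine it with the elementary decay-at-infinity of a single $H^1$ function, upgrading it to a uniform-in-$t$ bound by a compactness argument. First I would make the change of variables $z=x+x(t)$ in the integral of \eqref[3.30]: setting $v(t,\cdot):=u(t,\cdot-x(t))$, so that $\nabla v(t,z)=(\nabla u)(t,z-x(t))$, the estimate to prove becomes
\[
F_R\big(v(t)\big):=\int_{|z|>R}\Big(|\nabla v(t,z)|^2+|v(t,z)|^2+|v(t,z)|^{p+1}\Big)\,dz<\eps
\quad\text{for all }t\ge0 .
\]
By hypothesis $\cK:=\overline{\{v(t):t\ge0\}}$ is compact in $H^1(\R^d)$; in particular $M:=\sup_{f\in\cK}\norm[f]_{H^1}<\infty$.

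Next I would record two elementary facts. \emph{(a)} For each fixed $f\in H^1(\R^d)$, $F_R(f)\to0$ as $R\to\infty$: the first two terms are tails of the integrable function $|\nabla f|^2+|f|^2$, and, since the energy-subcritical range $p<1+4/(d-2)$ ($=\infty$ if $d\le2$) gives the Sobolev embedding $H^1(\R^d)\hookrightarrow L^{p+1}(\R^d)$, the third term is the tail of $|f|^{p+1}\in L^1$; all three vanish by dominated convergence. \emph{(b)} On the ball $\{\norm[f]_{H^1}\le M\}$ the map $f\mapsto F_R(f)$ is Lipschitz with a constant $C_M$ depending only on $M$ and not on $R$: writing $F_R=a_R+b_R$ with $a_R(f)^{1/2}=\norm[f]_{H^1(\{|z|>R\})}$ and $b_R(f)^{1/(p+1)}=\norm[f]_{L^{p+1}(\{|z|>R\})}$, which are seminorms on $H^1(\R^d)$ controlled by $\norm[\cdot]_{H^1}$ (the latter via Sobolev), one concludes using $|\alpha^{m}-\beta^{m}|\le m\max(\alpha,\beta)^{m-1}|\alpha-\beta|$ together with the bound $M$.

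Then I would run the uniformization by contradiction. If \eqref[3.30] failed there would be $\eps_0>0$, $R_n\to\infty$, and $t_n\ge0$ with $F_{R_n}(v(t_n))\ge\eps_0$. By compactness of $\cK$, after passing to a subsequence $v(t_n)\to v_*$ in $H^1(\R^d)$, whence by (b) and (a)
\[
F_{R_n}\big(v(t_n)\big)\le F_{R_n}(v_*)+C_M\,\norm[v(t_n)-v_*]_{H^1}\longrightarrow 0 ,
\]
contradicting $F_{R_n}(v(t_n))\ge\eps_0$. (Equivalently: cover $\cK$ by finitely many balls $B_{H^1}(f_i,\delta)$, choose $R_i$ with $F_{R_i}(f_i)<\eps/2$ by (a), set $R:=\max_i R_i$, and conclude from (b) with $\delta$ small together with the monotonicity of $R\mapsto F_R$.) This proves \eqref[3.30].

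There is no essential difficulty here: precompactness of the modulated orbit is precisely the hypothesis that turns the pointwise tail decay of (a) into a uniform one, so the only bookkeeping to watch is the nonlinear term $|u|^{p+1}$, handled throughout by the energy-subcritical embedding $H^1\hookrightarrow L^{p+1}$; the continuity of $x(t)$ and the invariance $\cG x(t)=x(t)$ are not used in this lemma (they enter the subsequent rigidity argument).
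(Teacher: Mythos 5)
Your proof is correct and is exactly the standard ``precompactness implies uniform localization'' argument that the paper invokes by citing \cite[Corollary 3.3]{DHR08} without writing out details: translate by $x(t)$, use tail decay of a single $H^1$ function (with the subcritical Sobolev embedding for the $|u|^{p+1}$ term), and upgrade to uniformity in $t$ via a finite cover or a contradiction/subsequence argument. Nothing further is needed.
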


It can be obtained by using directly the argument of \cite[Corollary 3.3]{DHR08}. 

\begin{lemma} \label{lem3.14}
Let $u$ be a forward critical element. Then, the momentum must be $0$, \textit{i.e.} $P(u)=0$. 
\end{lemma}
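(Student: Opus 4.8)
The plan is to argue by contradiction with a Galilean boost, in the spirit of \cite{DHR08}; the only point requiring care is that the boost must be compatible with the $G$-invariance.

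So suppose $P(u)\neq0$. Since the momentum is conserved, $P:=P(u(t))$ is a fixed vector, and I would first check that it is fixed by the rotation part of every element of $G$. Indeed, for $\cG=(\theta,\cR)\in G$, changing variables $x\mapsto\cR x$ in the integral defining $P(u(t))$ and using $u(t,x)=e^{-i\theta}u(t,\cR^{-1}x)$ together with the chain rule, the gauge phases cancel and a factor $\cR$ is produced, so that $P=\cR P$. Consequently, setting $\xi:=-2P/M(u)$, we have $\cR\xi=\xi$ for every rotation part $\cR$ occurring in $G$.

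Next I would introduce the Galilean transform $\tilde u(t,x):=e^{i(\frac{x\cdot\xi}{2}-\frac{|\xi|^{2}t}{4})}u(t,x-\xi t)$, which again solves \eqref[NLS], and verify that $\tilde u(t)\in H_G^1$ for every $t$: using $\cR\xi=\xi$, the translation by $\xi t$ commutes with the action of $\cG=(\theta,\cR)$ because $\cR(\xi t)=\xi t$, the linear phase is invariant under $x\mapsto\cR^{-1}x$ because $(\cR^{-1}x)\cdot\xi=x\cdot(\cR\xi)=x\cdot\xi$, and the remaining factor equals $e^{-i\theta}u(t,\cR^{-1}(x-\xi t))=u(t,x-\xi t)$ by the $G$-invariance of $u(t)$. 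A direct computation with the conservation laws then gives $M(\tilde u)=M(u)$, $P(\tilde u)=P+\frac{\xi}{2}M(u)=0$, $\norm[\nabla\tilde u(t)]_{L^2}^{2}=\norm[\nabla u(t)]_{L^2}^{2}-|P|^{2}/M(u)$, and hence
\[ S_{\omega}(\tilde u)=S_{\omega}(u)-\frac{|P(u)|^{2}}{2M(u)}<S_{\omega}(u)=\bS_{\omega}^{G}. \]
Since the orbit $\{u(t,\cdot-x(t))\}$ is precompact in $H^{1}$ by \lemref[lem3.12], $\sup_{t}\norm[\nabla u(t)]_{L^2}<\infty$, so $\tilde u$ is a global, $H^{1}$-bounded solution with $S_{\omega}(\tilde u(0))<l_{\omega}^{G}$; thus $\tilde u(0)$ lies in $\scK_{G,\omega}^{+}$ or $\scK_{G,\omega}^{-}$, and the latter is excluded by \thmref[thm1.1] (2) because $\tilde u$ neither blows up in finite time nor grows up. Hence $\tilde u(0)\in\scK_{G,\omega}^{+}$.

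Finally, $\tilde u(0)\in\scK_{G,\omega}^{+}$ together with $S_{\omega}(\tilde u(0))<\bS_{\omega}^{G}$ forces $\tilde u\in L^{\alpha}(\R:L^{r}(\R^{d}))$ by the definition of $\bS_{\omega}^{G}$; since $|\tilde u(t,x)|=|u(t,x-\xi t)|$, this gives $\norm[u]_{L^{\alpha}(\R:L^{r})}=\norm[\tilde u]_{L^{\alpha}(\R:L^{r})}<\infty$, contradicting $\norm[u]_{L^{\alpha}([0,\infty):L^{r})}=\infty$. Therefore $P(u)=0$. I expect the only genuine obstacle to be the observation that $\xi$ is fixed by the rotation parts of $G$, which is exactly what keeps $\tilde u$ inside $H_G^{1}$ so that the definition of $\bS_{\omega}^{G}$ can be applied; everything else is the classical Galilean computation.
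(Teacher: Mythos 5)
Your proposal is correct and follows essentially the same route as the paper: the key new observation, that $P(u)$ is fixed by the rotation parts of $G$ so that the Galilean boost $\xi=-2P(u)/M(u)$ preserves $G$-invariance and hence keeps the boosted solution in $H_G^1$ where the definition of $\bS_{\omega}^{G}$ applies, is exactly the point the paper makes before deferring the remaining computation to \cite{DHR08} and \cite{AN13}. The rest of your argument (the momentum/gradient identities, the exclusion of $\scK_{G,\omega}^{-}$, and the contradiction with $\norm[u]_{L^{\alpha}([0,\infty):L^r)}=\infty$) is just the cited classical computation written out in full, and it is sound.
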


\begin{proof}
First, we prove $\cG P(u)=P(u)$ for all $\cG \in G$. By the $G$-invariance of $u$, we see that
\begin{align*}
P(u) 
& = P(\cG^{-1} u)
 = \im \int_{\R^d} \overline{e^{i\theta} u( \cG x)} \nabla \{ e^{i\theta}  u(\cG x) \}dx
\\
&= \cG \im \int_{\R^d} \overline{u( \cG x)} \nabla  u(\cG x)dx
= \cG \im \int_{\R^d} \overline{u( x)} \nabla  u(x)dx = \cG P(u).
\end{align*}
Therefore, the Galilean transformation
\[ u_{\xi_0} (t,x):= e^{i(x\cdot \xi_0 - |\xi_0|^2t)} u(t,x- 2t \xi), \]
where $\xi_0 = - P(u)/M(u)$, conserves the $G$-invariance of the solution. The rest of the proof is same as in \cite[Proposition 4.1]{DHR08} and \cite[Proposition 4.1 (iii)]{AN13}.
\end{proof}

We use the following lemma to prove the rigidity lemma, \lemref[lem3.16].

\begin{lemma}
Let $u$ be a solution to \eqref[NLS] on $[0,\infty)$ such that $P(u)=0$ and there exists a continuous $x:[0,\infty) \to \R^d$ such that, for any $\eps>0$, there exists $R=R(\eps)>0$ such that 
\[  \int_{|x+x(t)|>R}  |\nabla u(t,x)|^2  + |u(t,x)|^2 + |u(t,x)|^{p+1} dx <\eps \text{ for any } t\in [0,\infty). \] Then, we have
\[ \frac{x(t)}{t} \to 0 \text{ as } t \to \infty. \]
\end{lemma}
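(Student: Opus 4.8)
The plan is to argue by contradiction, using truncated first moments of the mass density together with $P(u)=0$. Fix a smooth vector field $\phi=(\phi^1,\dots,\phi^d):\R^d\to\R^d$ with $\phi(y)=y$ for $|y|\le 1$ and $\phi(y)=0$ for $|y|\ge 2$, set $\phi_R(x):=R\phi(x/R)$, and for $j=1,\dots,d$ and $t\ge 0$ put $z_R^j(t):=\int_{\R^d}\phi_R^j(x)\,|u(t,x)|^2\,dx$. Since $u(t)\in H^1$ these are well defined and $|z_R^j(t)|\le 2R\,M(u)$. A standard computation (differentiating under the integral, using the equation, and integrating by parts, all legitimate because $\phi_R$ is compactly supported) gives $\frac{d}{dt}z_R^j(t)=2\int_{\R^d}\nabla\phi_R^j(x)\cdot\im(\overline{u(t,x)}\nabla u(t,x))\,dx$. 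Because $\nabla\phi_R^j\equiv e_j$ on $\{|x|\le R\}$ and $P(u)=0$, this reduces to $\frac{d}{dt}z_R^j(t)=2\int_{|x|>R}(\nabla\phi_R^j(x)-e_j)\cdot\im(\overline{u}\nabla u)\,dx$, whence $|\frac{d}{dt}z_R^j(t)|\lesssim\int_{|x|>R}(|u(t)|^2+|\nabla u(t)|^2)\,dx$. Given $\eps>0$, let $R_0=R(\eps)$ be as in the hypothesis; then for any $R\ge R_0+|x(t)|$ we have $\{|x|>R\}\subset\{|x+x(t)|>R_0\}$, so $|\frac{d}{dt}z_R^j(t)|\lesssim\eps$.

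Suppose the conclusion fails; then there are $\delta_0>0$ and $t_n\to\infty$ with $|x(t_n)|\ge\delta_0 t_n$. Using that $x$ is continuous, I pass to the running maximum $\sigma(t):=\max_{0\le s\le t}|x(s)|$ and pick $s_n\in[0,t_n]$ with $|x(s_n)|=\sigma(t_n)$; then $|x(s_n)|=\max_{0\le s\le s_n}|x(s)|\ge\delta_0 s_n$ and $s_n\to\infty$. Set $R_n:=R_0+|x(s_n)|$. Since $R_n\ge R_0+|x(s)|$ for every $s\in[0,s_n]$, the differential bound established above holds on all of $[0,s_n]$, so $|z_{R_n}^j(s_n)-z_{R_n}^j(0)|\lesssim\eps\,s_n$.

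For the lower bound I use the concentration of $u$. On $\{|x+x(s_n)|\le R_0\}$ one has $|x|\le R_n$, so $\phi_{R_n}^j(x)=x_j$ there; writing $x_j=(x_j+x_j(s_n))-x_j(s_n)$, using $\int_{|x+x(s_n)|>R_0}|u|^2<\eps$, and using the crude bound $|\phi_{R_n}^j|\le 2R_n$ off that set, one gets $z_{R_n}^j(s_n)=-x_j(s_n)M(u)+O(R_0 M(u))+O(|x(s_n)|\eps)+O(R_n\eps)$, which (since $R_n=R_0+|x(s_n)|$) is $-x_j(s_n)M(u)+O(|x(s_n)|\eps)+O_{\eps}(1)$; similarly $z_{R_n}^j(0)=O(|x(s_n)|\eps)+O_{\eps}(1)$ for $n$ large. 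Choosing the index $j$ with $|x_j(s_n)|\ge|x(s_n)|/\sqrt d$ yields $|z_{R_n}^j(s_n)-z_{R_n}^j(0)|\ge\frac{M(u)}{\sqrt d}|x(s_n)|-C|x(s_n)|\eps-C_{\eps}$, with $C$ an absolute constant and $C_{\eps}$ independent of $n$.

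Combining the upper and lower bounds with $|x(s_n)|\ge\delta_0 s_n$ and $M(u)>0$ (the lemma being applied to a nontrivial solution), and then choosing $\eps$ small --- first so that $C\eps\le M(u)/(2\sqrt d)$, then so that the coefficient of $s_n$ remains positive --- forces $s_n$ to be bounded, contradicting $s_n\to\infty$. I expect the main obstacle to be the simultaneous sizing of the radius $R_n$: it must be $\gtrsim|x(s_n)|$ so that $z_{R_n}^j(s_n)$ actually detects the term $-x_j(s_n)M(u)$, yet also $\ge R_0+\sup_{[0,s_n]}|x(s)|$ so that the differential estimate is uniformly $O(\eps)$ on the whole interval $[0,s_n]$ --- it is this tension that forces the reduction to a sequence $s_n$ realizing the running maximum of $|x|$, and it also makes it essential to track the two types of error ($O(|x(s_n)|\eps)$, which must be absorbed into the main term by smallness of $\eps$, and $O_{\eps}(1)$, which is harmless once $s_n\to\infty$) separately and in the correct order.
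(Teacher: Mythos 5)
Your proof is correct and follows essentially the same route as the paper, which gives no proof of its own but invokes the standard truncated-center-of-mass argument of \cite[Lemma 5.1]{DHR08} and \cite[Proof of Theorem 7.1, Step 1]{FXC11}: a compactly supported first moment $z_R$, the bound $|\frac{d}{dt}z_R|\lesssim\eps$ coming from $P(u)=0$ together with the tail smallness, and the concentration estimate forcing $z_R(t)$ to track $-x(t)M(u)$. Your reduction to times realizing the running maximum of $|x(\cdot)|$ cleanly resolves the quantifier tension on the radius that you identify, and your remark that $M(u)>0$ is needed (the statement is false for $u\equiv 0$) is accurate --- the lemma is only ever applied to the nonzero critical element.
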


This follows from \cite[Lemma 5.1]{DHR08}, \cite[Proof of Theorem 7.1, Step1]{FXC11}.

\begin{lemma}[Rigidity]
\label{lem3.16}
If the solution $u$ with $G$-invariance satisfies the following properties, then $u = 0$.
\begin{enumerate}
\item $u_0 \in \cK_{G,\omega}^{+}$.
\item $P(u)=0$.
\item There exists a continuous $x:[0,\infty) \to \R^d$ such that $ \cG x(t)=x(t)$ for all $t \in[0,\infty)$ and $\cG \in G$ and, for any $\eps>0$, there exists $R=R(\eps)>0$ such that 
\[  \int_{|x+x(t)|>R}  |\nabla u(t,x)|^2  + |u(t,x)|^2 + |u(t,x)|^{p+1} dx <\eps \text{ for any } t\in [0,\infty). \]
\end{enumerate}
\end{lemma}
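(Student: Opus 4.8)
The plan is to run the standard Kenig–Merle localized virial argument, adapted to the $G$-invariant setting, and combine it with the variational lower bound on $K$ coming from Lemma \ref{lem2.4}. First I would record that, by Lemma \ref{lem2.3}, the solution $u(t)$ stays in $\scK_{G,\omega}^{+}$ for all $t\in[0,\infty)$, and by Lemma \ref{lem2.4} there is a uniform coercivity estimate: either $K(u(t))\geq \delta\norm[\nabla u(t)]_{L^2}^2$ for all $t$, or $K(u(t))\geq 4(l_\omega^G-S_\omega(u_0))/d=:\delta'>0$ for all $t$. Since $S_\omega(u_0)<l_\omega^G$ strictly, in either case we get a uniform positive lower bound $K(u(t))\geq c_0>0$ unless $\norm[\nabla u(t)]_{L^2}\to 0$ along a sequence; but if $\norm[\nabla u(t)]_{L^2}$ is small on a large set then, together with the precompactness-induced spatial localization from hypothesis (3), one concludes $u\equiv 0$ directly (this is the "easy" subcase). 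So the substantive case is $K(u(t))\geq c_0>0$ for all $t\geq 0$.

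Next I would introduce the truncated virial quantity $V_R(t):=\int_{\R^d}\phi_R(x+x(t))\,|u(t,x)|^2\,dx$ (or centered at $x(t)$ appropriately), where $\phi_R(x)=R^2\phi(x/R)$ and $\phi$ is a smooth radial function equal to $|x|^2$ on $|x|\leq 1$ and constant for $|x|\geq 2$. Differentiating twice and using the equation, one gets the standard identity
\[
V_R''(t)=8K(u(t))+\text{(error terms localized to }|x+x(t)|>R\text{)}+\text{(momentum terms)}.
\]
Because $P(u)=0$ (hypothesis (2)) and by the $G$-invariance the center $x(t)$ is constrained ($\cG x(t)=x(t)$), the Galilean/translation bookkeeping simplifies; the momentum-type correction terms vanish or are controlled exactly as in \cite[Proposition 4.1]{DHR08} and \cite[Proof of Theorem 7.1]{FXC11}. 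By hypothesis (3), for any $\eps>0$ we can choose $R$ so that the error terms are bounded by $\eps$ uniformly in $t$; taking $\eps<4c_0$ yields $V_R''(t)\geq 4c_0>0$ for all $t\geq 0$. On the other hand $|V_R'(t)|\lesssim R\,\norm[u(t)]_{L^2}\norm[\nabla u(t)]_{L^2}\lesssim R$ is bounded uniformly in $t$ (mass conservation plus the uniform $H^1$ bound coming from Lemma \ref{lem2.2} and global existence), and $V_R(t)\geq 0$. Integrating $V_R''\geq 4c_0$ over $[0,T]$ forces $V_R'(T)\to+\infty$, contradicting the uniform bound on $V_R'$. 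Hence the case $K(u(t))\geq c_0>0$ is impossible unless $u_0=0$, which completes the proof.

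The main obstacle I anticipate is making the localization argument for the error terms genuinely uniform in time while respecting the $G$-invariant centering. One must verify that the single continuous center $x(t)$ (which by Lemma \ref{lem3.12} can be taken $G$-fixed) is the correct translation to localize the precompact family, and that hypothesis (3) — which already builds in the shift by $x(t)$ — is exactly what is needed to bound the virial error terms; the fact that $\cG x(t)=x(t)$ is what prevents the center from "spreading out" under the group action and is essential for the truncated virial to close. A secondary technical point is justifying the twice-differentiability and the integration by parts at the level of $H^1$ solutions, which is handled by the usual regularization/approximation argument (as in \cite{Caz03, DHR08}); I would invoke that rather than reproduce it. With these in place, the contradiction between the convexity of $V_R$ and the boundedness of $V_R'$ closes the argument, so $u\equiv 0$, and consequently (via Proposition \ref{prop3.11}) no critical element exists, completing the proof of the scattering part Theorem \ref{thm1.1} (1)(i).
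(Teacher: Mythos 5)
Your outline follows the same route as the references the paper points to for this lemma (the paper gives no proof of its own, only the citations to \cite[Theorem 6.1]{DHR08} and \cite[Theorem 7.1]{FXC11}): uniform coercivity $K(u(t))\geq c_0>0$ from Lemmas \ref{lem2.3} and \ref{lem2.4} (with the degenerate case $\inf_t\norm[\nabla u(t)]_{L^2}=0$ disposed of separately), a truncated virial identity, and a convexity-versus-boundedness contradiction. The coercivity step and the reduction are fine.

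There is, however, a genuine gap in how you close the contradiction, and it is exactly the point the paper flags by stating the unnamed lemma ($x(t)/t\to 0$) immediately before Lemma \ref{lem3.16} "to prove the rigidity lemma". Hypothesis (3) controls the tails only in the region $|x+x(t)|>R$, i.e.\ relative to the \emph{moving} center. If you center the weight at $x(t)$, as your definition $V_R(t)=\int\phi_R(x+x(t))|u|^2\,dx$ does, then $V_R'(t)$ picks up a term $x'(t)\cdot\int\nabla\phi_R(x+x(t))|u|^2\,dx$, and $x(\cdot)$ is only continuous, so this quantity is neither defined nor controlled; this is why the standard proof uses a weight with a \emph{fixed} center. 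But with a fixed center the error terms are supported in $|x|>R$, and to dominate them by hypothesis (3) uniformly on $[0,T]$ you must take $R\geq R(\eps)+\sup_{t\leq T}|x(t)|$, so $R$ cannot be held fixed as $T\to\infty$ unless $x$ is bounded. The inequality $4c_0T\lesssim |V_R'(T)|+|V_R'(0)|\lesssim R$ then only yields a contradiction because $\sup_{t\leq T}|x(t)|=o(T)$ — and that is precisely the content of the preceding lemma, which is also the only place the hypothesis $P(u)=0$ actually enters (not, as you suggest, to make "momentum correction terms" in $V_R''$ vanish). Your write-up never invokes $x(t)/t\to 0$, so as stated the argument does not close when $x(t)$ is unbounded. (A minor point: with the paper's normalization \eqref{1.1.1} the leading term of $V_R''$ is $4dK(u(t))$, not $8K(u(t))$, but this does not affect the structure.)
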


For the proof of \lemref[lem3.16], see \cite[Theorem 6.1]{DHR08} and \cite[Theorem7.1]{FXC11}. 

Combining Lemmas \ref{lem3.12}, \ref{lem3.13}, and \ref{lem3.14}, the forward critical element satisfies the assumption (1)--(3) in \lemref[lem3.16]. 
The result by \lemref[lem3.16] contradicts $S_{\omega}(u)=\bS_{\omega}^{G}>0$.
This contradiction comes from the assumption $\bS_{\omega}^{G} <  \min\{m_{\omega}^{G}, l_{\omega}^{G}\}$ in \propref[prop3.11]. This means $\bS_{\omega}^G = \min\{m_{\omega}^{G},l_{\omega}^{G}\}$, which completes the proof of \thmref[thm1.1] (1) (i).

%%%%%%%%%%%%%%%%%%%%%%%%%%%%%%%%%%%%%%%%%%%%%%%%%%%%%%%%%%%%%%%%%%%%%%%%%%%%%%%%%%%%%%%%%%%%%%%%%%%%%%%%%%%%%%%%

\section{Proof of the Scattering Result for the infinite group invariant solutions}

In this section, we prove \thmref[thm1.1] (1) (ii). Let a subgroup $G$ of $\R/2\pi\Z \times O(d)$ be infinite and satisfy that the embedding $H_{G}^1 \hookrightarrow L^{p+1}(\R^d)$ is compact throughout this section. 

First, we prove that the sequence $\{\tilde{x}_n\}$ in \eqref[3.20] is bounded by a contradiction argument. We suppose that $\{\tilde{x}_n\}$ is unbounded. We may assume that $|\tilde{x}_n| \to \infty$ as $n \to \infty$. Since $\{\varphi_n\}$  is bounded in $H^1$, we can find $\tilde{\psi} \in H_{G}^1$ and $\psi \in H^1(\R^d)$ such that 
\begin{align*}
e^{-i\tilde{t}_n\Delta} \varphi_n  &\wto \tilde{\psi} \text{ in } H^1(\R^d),
\\
e^{-i\tilde{t}_n\Delta} \tau_{-\tilde{x}_n} \varphi_n  & \wto \psi \text{ in } H^1(\R^d).
\end{align*}
Since the embedding $H_{G}^1 \hookrightarrow L^{p+1}(\R^d)$ is compact, we see that $e^{-i\tilde{t}_n\Delta} \varphi_n  \to \tilde{\psi}$ in $L^{p+1}(\R^d)$. On the other hand, for any $R>0$, we have $e^{-i\tilde{t}_n\Delta}\tau_{-\tilde{x}_n} \varphi_n  \to \psi$ in $L^{p+1}(B_R)$, where $B_R$ is a ball of radius $R$ centered at the origin. 
These limits imply that 
\begin{align*}
\norm[\tau_{-\tilde{x}_n}\tilde{\psi} -\psi  ]_{L^{p+1}(B_R)}
&\leq \norm[\tau_{-\tilde{x}_n}\tilde{\psi}- e^{-i\tilde{t}_n\Delta} \tau_{-\tilde{x}_n} \varphi_n ]_{L^{p+1}(B_R)}
+\norm[e^{-i\tilde{t}_n\Delta}\tau_{-\tilde{x}_n} \varphi_n -\psi  ]_{L^{p+1}(B_R)}
\\
&\to 0.
\end{align*}
Since $|\tilde{x}_n| \to \infty$ as $n \to \infty$, we have $\norm[\tau_{-\tilde{x}_n} \psi]_{L^{p+1}(B_R)} \to 0$ as $n \to \infty$. Therefore, we see that, for any $R>0$,
\[ \norm[\psi  ]_{L^{p+1}(B_R)} \leq \norm[\tau_{-\tilde{x}_n}\tilde{\psi} -\psi  ]_{L^{p+1}(B_R)} +\norm[\tau_{-\tilde{x}_n} \psi]_{L^{p+1}(B_R)}  \to 0. \]
This means that $\psi =0$. However, we have $\psi \neq 0$ by \eqref[3.20]. This is a contradiction.

Thus, we can take $x_n:=0$ for all $n \in \N$ in the linear profile decomposition lemma. The rest of the proof is same as in the radial case. See \cite{HR08} for details.

%%%%%%%%%%%%%%%%%%%%%%%%%%%%%%%%%%%%%%%%%%%%%%%%%%%%%%%
%%%%%%%%%%%%%%%%%%%%%%%%%%%%%%%%%%%%%%%%%%%%%%%%%%%%%%%

\appendix

%%%%%%%%%%%%%%%%%%%%%%%%%%%%%%%%%%%%%%%%%%%%%%%%%%%%%%%
%%%%%%%%%%%%%%%%%%%%%%%%%%%%%%%%%%%%%%%%%%%%%%%%%%%%%%%

\section{Lemmas}
\label{secA}

\begin{lemma}
\label{lemA.0}
Let $G$ be a subgroup of $\R/2\pi\Z \times O(d)$ and $\{\tilde{x}_n\}$ be a sequence. Then, there exists a subgroup $G'$ of $G$ such that the sequence $\{\tilde{x}_n - \cG' \tilde{x}_n\}$ is bounded for all $\cG' \in G'$ and $|\tilde{x}_n - \cG \tilde{x}_n| \to \infty$ as $n \to \infty$ for all $\cG \in G \setminus G'$.
\end{lemma}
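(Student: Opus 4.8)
The plan is to produce, after passing to a subsequence (which I do not relabel), a single subsequence of $\{\tilde{x}_n\}$ along which, for every $\cG\in G$, the sequence $\{\tilde{x}_n-\cG\tilde{x}_n\}$ is \emph{either} bounded \emph{or} satisfies $|\tilde{x}_n-\cG\tilde{x}_n|\to\infty$. (Passing to a subsequence is genuinely needed: already for $d=1$, $G=O(1)$, and $\tilde{x}_n$ equal to $0$ for even $n$ and to $n$ for odd $n$, no subgroup $G'$ obeys both requirements for the whole sequence; this is consistent with the way \lemref[lemA.0] is invoked in the proof of \lemref[IS].) Once such a subsequence is fixed, one is forced to take $G':=\{\cG\in G:\{\tilde{x}_n-\cG\tilde{x}_n\}\text{ is bounded}\}$, and then both conclusions hold by construction. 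That $G'$ is a subgroup is immediate, since it is closed under inverses (as $\cG^{-1}$ is a linear isometry, $|\tilde{x}_n-\cG^{-1}\tilde{x}_n|=|\tilde{x}_n-\cG\tilde{x}_n|$) and under products (from $|\tilde{x}_n-\cG_1\cG_2\tilde{x}_n|\le|\tilde{x}_n-\cG_1\tilde{x}_n|+|\tilde{x}_n-\cG_2\tilde{x}_n|$). Throughout I use assumption (A) to write $\cG$ for both an element of $G$ and its matrix part, noting that these conditions see only the matrix parts, so $G$ may be regarded as a subgroup of $O(d)$.

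It therefore remains to arrange the dichotomy along one subsequence. The key geometric input is that for $\cG\in O(d)$ the orthogonal splitting $\R^d=\mathrm{Fix}(\cG)\oplus\mathrm{Fix}(\cG)^{\perp}$, with $\mathrm{Fix}(\cG)=\ker(\cI_d-\cG)$, is $\cG$-invariant, and $\cI_d-\cG$ restricts to an invertible map on $\mathrm{Fix}(\cG)^{\perp}$; hence there is $c_\cG>0$ with $c_\cG\,\mathrm{dist}(x,\mathrm{Fix}(\cG))\le|x-\cG x|\le 2\,\mathrm{dist}(x,\mathrm{Fix}(\cG))$ for all $x\in\R^d$. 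Using this I would induct on $d$. If $\{\tilde{x}_n\}$ has a bounded subsequence, pass to it and take $G'=G$. Otherwise $|\tilde{x}_n|\to\infty$, and after a subsequence $\tilde{x}_n/|\tilde{x}_n|\to\theta_*\in S^{d-1}$ by compactness of the sphere. For $\cG\notin H:=\mathrm{Stab}_G(\theta_*)$ one has $\theta_*\notin\mathrm{Fix}(\cG)$, so $\mathrm{dist}(\tilde{x}_n,\mathrm{Fix}(\cG))=|\tilde{x}_n|\,\mathrm{dist}(\tilde{x}_n/|\tilde{x}_n|,\mathrm{Fix}(\cG))\to\infty$ and hence $|\tilde{x}_n-\cG\tilde{x}_n|\to\infty$. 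For $\cG\in H$ we have $\cG\theta_*=\theta_*$, so $\cG$ preserves $V:=(\R\theta_*)^{\perp}$ and $\tilde{x}_n-\cG\tilde{x}_n=y_n-\cG y_n$, where $y_n$ is the orthogonal projection of $\tilde{x}_n$ onto $V$; the inductive hypothesis applied to $\{\cG|_V:\cG\in H\}\le O(V)\cong O(d-1)$ and $\{y_n\}\subset V$ gives a further subsequence along which every $\cG\in H$ has $\{\tilde{x}_n-\cG\tilde{x}_n\}$ bounded or divergent. This closes the induction, completing the argument.

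The step I expect to be the real obstacle is the last one when $G$ is uncountable (say $G\supseteq\mathrm{SO}(d)$): a direct diagonal argument over the elements of $G$ is unavailable, yet the set of $\cG$ for which $\{\tilde{x}_n-\cG\tilde{x}_n\}$ is bounded along one subsequence but unbounded along another must be emptied along a single subsequence. Inducting on the ambient dimension is what circumvents this, since it performs only finitely many ($\le 2d$) extractions while dropping from $O(d)$ to $O(d-1)$ at each stage. When $G$ is countable — in particular finite, which is the case used in Section~4 — the induction can be skipped: pass to a subsequence along which $|\tilde{x}_n-\cG\tilde{x}_n|$ converges in $[0,\infty]$ for every $\cG\in G$ (a diagonal argument), and let $G'$ collect those $\cG$ with finite limit.
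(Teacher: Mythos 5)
Your proof is correct, and on the algebraic step it coincides with the paper's: the paper also takes $G'$ to be the set of $\cG$ for which $\{\tilde{x}_n-\cG\tilde{x}_n\}$ is bounded and verifies the subgroup property by exactly your triangle-inequality/isometry computation. Where you genuinely diverge is on the analytic step. The paper disposes of it with ``it is easy to check that there exists a subset $G'$ \dots bounded for $\cG'\in G'$ and \emph{unbounded} for $\cG\in G\setminus G'$,'' which is a tautological partition and, crucially, yields only unboundedness rather than the divergence $|\tilde{x}_n-\cG\tilde{x}_n|\to\infty$ asserted in the statement. You are right that the statement as printed is false without passing to a subsequence (your $O(1)$ example is a valid counterexample), and right that this is harmless because the lemma is only invoked in the proof of Lemma~\ref{IS} after ``taking a subsequence.'' Your fixed-space estimate $c_\cG\,\mathrm{dist}(x,\mathrm{Fix}(\cG))\le|x-\cG x|\le 2\,\mathrm{dist}(x,\mathrm{Fix}(\cG))$, combined with the induction on dimension (splitting off $\R\theta_*$ and restricting the stabilizer of $\theta_*$ to $(\R\theta_*)^{\perp}$), cleanly produces the bounded-or-divergent dichotomy along a single subsequence using only finitely many extractions, and it covers uncountable $G$, which a diagonal argument alone would not; for the finite groups actually used in Section~4 the diagonal argument you mention at the end already suffices. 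In short, your proposal supplies a complete argument for a step the paper leaves unjustified and, as literally stated, unjustifiable.
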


\begin{proof}
It is easy to check that there exists a subset $G'$ of $G$ such that the sequence $\{\tilde{x}_n - \cG' \tilde{x}_n\}$ is bounded for $\cG' \in G'$ and the sequence $\{\tilde{x}_n - \cG \tilde{x}_n\}$ is unbounded for $\cG \in G \setminus G'$. We prove that $G'$ is a group. 
Let $\langle G' \rangle$ denote the group generated by $G'$. For any $\cG \in \langle G' \rangle$, there exist $k \in \N$ and $\{\cG_1', \cdots, \cG_k'\} \subset G'$ such that $\cG  = (\cG_1')^{s_1} (\cG_2')^{s_2} \cdots (\cG_k')^{s_k}$, where $s_j$ denotes either $1$ or $-1$ for $j \in \{ 1,2,\cdots, k\}$. By the triangle inequality, we have
\begin{align*} 
|\tilde{x}_n - \cG \tilde{x}_n| 
&=|\tilde{x}_n -(\cG_1')^{s_1} (\cG_2')^{s_2} \cdots (\cG_k')^{s_k} \tilde{x}_n| 
\\
& \leq |\tilde{x}_n - (\cG_1')^{s_1} \tilde{x}_n| + |(\cG_1')^{s_1}  \tilde{x}_n -(\cG_1')^{s_1} (\cG_2')^{s_2} \tilde{x}_n|  
\\
& \quad + |(\cG_1')^{s_1} (\cG_2')^{s_2}\tilde{x}_n - (\cG_1')^{s_1} (\cG_2')^{s_2}(\cG_3')^{s_3} \tilde{x}_n| 
\\
& \qquad+ \cdots +|(\cG_1')^{s_1} (\cG_2')^{s_2} \cdots (\cG_{k-1}')^{s_{k-1}}  \tilde{x}_n -(\cG_1')^{s_1} (\cG_2')^{s_2} \cdots (\cG_k')^{s_k} \tilde{x}_n| 
\\
& \leq  |\tilde{x}_n - (\cG_1')^{s_1}  \tilde{x}_n| + |\tilde{x}_n -  (\cG_2')^{s_2}  \tilde{x}_n|  
+ |\tilde{x}_n -  (\cG_3')^{s_3}  \tilde{x}_n| + \cdots +| \tilde{x}_n -  (\cG_k')^{s_k}  \tilde{x}_n|. 
\end{align*}
Therefore, $\{ \tilde{x}_n - \cG \tilde{x}_n \}$ is bounded. This means that $ \langle G' \rangle \subset G'$ and thus $G'$ is a group. 
\end{proof}

\begin{lemma}
\label{lemA}
Let $k \in \N$ and $\cA$ be a $kd\times d$-matrix. We assume that a sequence $\{\tilde{x}_n\}\subset \R^d$ satisfies that there exists $\bar{x} \in \R^{kd}$ such that $\cI \tilde{x}_n - \cA \tilde{x}_n \to \bar{x}$ where $\cI$ is a $kd\times d$-matrix such that 
\[ \l. \cI = \begin{pmatrix} \cI_d \\ \cI_d \\ \vdots \\ \cI_d \end{pmatrix} \r\}k.\]
Then, there exist $\{x_n\}\subset \R^d$ and $x_\infty \in \R^d$ such that 
\begin{equation*}
\l\{
\begin{array}{l}
\cA x_n = \cI x_n,
\\
x_n - \tilde{x}_n \to x_\infty. 
\end{array}
\r.
\end{equation*}
\end{lemma}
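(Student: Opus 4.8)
The plan is to reduce the statement to elementary finite-dimensional linear algebra. Set $\cB:=\cI-\cA$, a $kd\times d$ matrix, viewed as a linear map $\R^d\to\R^{kd}$. The hypothesis then reads $\cB\tilde{x}_n\to\bar{x}$ in $\R^{kd}$, and, since $\cA x_n=\cI x_n$ is equivalent to $\cB x_n=0$, the conclusion we must produce is a sequence $\{x_n\}\subset\R^d$ with $x_n\in\ker\cB$ and $x_n-\tilde{x}_n$ convergent in $\R^d$.

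First I would fix the orthogonal decomposition $\R^d=\ker\cB\oplus(\ker\cB)^{\perp}$ and write $\tilde{x}_n=p_n+q_n$ with $p_n\in\ker\cB$ and $q_n\in(\ker\cB)^{\perp}$. Because $\cB p_n=0$, the hypothesis becomes $\cB q_n\to\bar{x}$. The restriction $\cB|_{(\ker\cB)^{\perp}}\colon(\ker\cB)^{\perp}\to\R^{kd}$ is injective, and its image $\mathrm{Ran}\,\cB$ is a linear subspace of $\R^{kd}$, hence closed; therefore $\bar{x}\in\mathrm{Ran}\,\cB$, and applying the (continuous, linear) inverse of $\cB|_{(\ker\cB)^{\perp}}$ onto $\mathrm{Ran}\,\cB$ to $\cB q_n\to\bar{x}$ yields $q_n\to q_\infty$ for some $q_\infty\in(\ker\cB)^{\perp}$.

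Then I would simply take $x_n:=p_n=\tilde{x}_n-q_n$. By construction $\cB x_n=0$, i.e.\ $\cI x_n=\cA x_n$, and
\[
x_n-\tilde{x}_n=-q_n\longrightarrow -q_\infty=:x_\infty,
\]
which is exactly the claimed assertion.

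The only step that needs a word of justification — and it is a standard fact rather than a genuine obstacle — is the closedness/surjectivity point: a linear image of a finite-dimensional space is closed, so the limit $\bar{x}$ of $\cB q_n$ automatically lies in $\mathrm{Ran}\,\cB$, and $\cB$ restricted to the orthogonal complement of its kernel is injective and thus has a bounded linear inverse on its range. There is no analytic difficulty here; the lemma is just a packaging of the elementary fact that convergence of $\cB\tilde{x}_n$ forces the component of $\tilde{x}_n$ transverse to $\ker\cB$ to converge, while the component inside $\ker\cB$ can be used as the sought fixed-point sequence $x_n$.
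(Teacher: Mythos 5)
Your proof is correct, and it takes a genuinely different and cleaner route than the paper. The paper reduces $\cB=\cI-\cA$ to a row-echelon-type normal form $\cP(\cI-\cA)\cQ$ via invertible matrices $\cP$, $\cQ$, and then manipulates the components of $\tilde{y}_n=\cQ^{-1}\tilde{x}_n$ explicitly: it defines the first $r=\rank\cB$ coordinates of $y_n$ so as to kill $\cB y_n$, keeps the remaining coordinates of $\tilde{y}_n$, and checks convergence of the difference coordinate by coordinate before transporting everything back through $\cQ$. You instead use the invariant decomposition $\R^d=\ker\cB\oplus(\ker\cB)^{\perp}$, observe that $\cB$ restricted to $(\ker\cB)^{\perp}$ is a linear bijection onto its (closed, finite-dimensional) range with continuous inverse, and conclude that the transverse component $q_n$ of $\tilde{x}_n$ converges while the kernel component $p_n$ serves as $x_n$. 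The two arguments are mathematically equivalent --- the paper's normal form is just a coordinate realization of your kernel/complement splitting --- but yours avoids the bookkeeping with the matrices $\cP$, $\cQ$ and the entries $a_{ij}$, and makes transparent exactly which two facts are being used: closedness of the range of a linear map on a finite-dimensional space, and boundedness of the inverse on that range. The only point worth stating explicitly in a written version is that $\operatorname{Ran}\bigl(\cB|_{(\ker\cB)^{\perp}}\bigr)=\operatorname{Ran}\cB$, so that $\bar{x}$ indeed lies in the range of the restricted map; you implicitly use this and it is immediate from $\cB(p_n+q_n)=\cB q_n$.
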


\begin{proof}
It is well known that there exist $kd\times kd$-matrix $\cP$ and $d\times d$-matrix $\cQ$ such that
\begin{equation}
\label{B1}
\cP(\cI-\cA)\cQ=
\underbrace{
\l(
\begin{array}{ccc}
1 &   &   
\\
   & \ddots &   
\\
   &   & 1 
\\
   & 0 &   
\end{array}
\r.
}_{r}
\underbrace{
\l.
\begin{array}{ccc}
a_{11} & \cdots & a_{1d-r}
\\
\vdots &   &   \vdots
\\
a_{r1} & \cdots & a_{rd-r}
\\
  & 0   &     
\end{array}
\r)}_{d-r}
\begin{array}{l}
\Bigg\} r
\\
\} kd-r
\end{array},
\end{equation}
where $r=\rank (\cI-\cA)$. We set $\tilde{y}_n := \cQ^{-1} \tilde{x}_n$, $\cB:=\cP(\cI-\cA)\cQ$, and $\bar{y}:=\cP \bar{x}$. Then, we have $\cB \tilde{y}_n \to \bar{y}$ since 
\begin{align*}
|\cB \tilde{y}_n-\bar{y}|
& = |\cP(\cI-\cA)\cQ \tilde{y}_n - \cP \bar{x}|
= |\cP(\cI-\cA)\cQ  \cQ^{-1} \tilde{x}_n - \cP \bar{x}|
\\
& \leq  |\cP| |(\cI-\cA) \tilde{x}_n -\bar{x}|
\to 0 \text{ as } n \to \infty.
\end{align*}
In particular, for $i\in \{1,2,\cdots,r\}$,
\begin{equation}
\label{B2}
|\tilde{y}_n^i + (a_{i1} \tilde{y}_{n}^{r+1} + \cdots + a_{id-r} \tilde{y}_{n}^{d}) - \bar{y}^i| \to 0 \text{ as } n \to \infty, 
\end{equation}
where $z^j$ denotes the $j$-th component of $z \in \R^d$.
We take $\{y_n\} \subset \R^d$ satisfying the following properties.
\begin{equation*}
\l\{
\begin{array}{l}
y_n^i:= -(a_{i1} \tilde{y}_{n}^{r+1} + \cdots + a_{id-r} \tilde{y}_{n}^{d}),
\\
y_n^j := \tilde{y}_n^j,
\end{array}
\r.
\end{equation*}
for $i\in \{1,2,\cdots,r\}$ and $j\in\{r+1,r+2,\cdots,d\}$. Then, we have $\cB y_n =0$ for all $n \in \N$ by the definition of $\{y_n\}$. Moreover, we have for $i\in \{1,2,\cdots,r\}$ and $j\in\{r+1,r+2,\cdots,d\}$, 
\begin{align*}
\tilde{y}_n^i - y_n^i
&=\tilde{y}_n^i + (a_{i1} \tilde{y}_{n}^{r+1} + \cdots + a_{id-r} \tilde{y}_{n}^{d}) 
\underbrace{- (a_{i1} \tilde{y}_{n}^{r+1} + \cdots + a_{id-r} \tilde{y}_{n}^{d})  - y_n^i}_{=0 \text{ by the definition of $y_n^i$ }}
\\
&\to \bar{y}^i \text{ as } n \to \infty.
\end{align*}
and $\tilde{y}_{n}^{j}- y _n^j=0$. Therefore,
\[ \begin{array}{r} d\{ \\ kd-d \{\end{array} \begin{pmatrix} \tilde{y}_n \\ 0 \end{pmatrix} - \begin{pmatrix} y_n \\ 0 \end{pmatrix} \to \bar{y} \text{ as } n \to \infty. \]
Note that $\bar{y}^{r+1}=\bar{y}^{r+2}=\cdots=\bar{y}^{kd}=0$. Define $x_n:=\cQ y_n$ by \eqref[B1]. Then, we have
\[0=\cB y_n = \cP(\cI-\cA)\cQ y_n = \cP(\cI-\cA) x_n.\]
Multiplying $\cP^{-1}$ from the left, we obtain $\cA x_n = \cI x_n$. Moreover, 
\[ \tilde{x}_n - x_n= \cQ (\tilde{y}_n - y_n) \to \cQ \begin{pmatrix} \bar{y}^1 \\ \vdots \\ \bar{y}^d \end{pmatrix}=: x_\infty. \]
This completes the proof. 
\end{proof}

\begin{lemma}
\label{lemA.3}
Let $f\in H_{G'}^1$ and $\{x_n\}$ satisfy $|x_n - \cG x_n| \to \infty$ as $n\to \infty$ for $\cG \in G\setminus G'$.
We have the following identities.
\begin{align}
\label{A.3}
\norm[ \sum_{\cG \in G} \cG (\tau_{x_n} f )]_{\dot{H}^{\lambda}}^2
&=\frac{\#G}{\#G'} \norm[ \#G' f]_{\dot{H}^{\lambda}}^2 +o(1),
\\ 
\label{A.4}
\norm[ \sum_{\cG \in G} \cG (\tau_{x_n} f )]_{L^{p}}^{p}
&=\frac{\#G}{\#G'} \norm[ \#G' f]_{L^{p}}^{p} +o(1)
\end{align}
where $\lambda \in[0,1]$, $p\geq 1$ and $o(1)\to 0$ as $n\to \infty$. In particular, the following identity holds for any $\omega>0$.
\[ S_{\omega}\l(\sum_{\cG \in G} \cG (\tau_{x_n} f )\r)= \frac{\#G}{\#G'} S_{\omega}\l( \#G' f\r)+o(1).\]
\end{lemma}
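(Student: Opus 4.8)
The plan is to reduce the statement to an identity over a fixed set of left coset representatives of $G'$ in $G$, then to expand the norm in question and show that every cross term vanishes because the translation parameters attached to distinct representatives are pulled to infinity. First I would fix left coset representatives $\cG_1,\dots,\cG_N$ of $G'$ in $G$, where $N:=\#G/\#G'$, so that $G=\bigsqcup_{k=1}^N\cG_k G'$. Since the group action commutes with translations, $\cG\,(\tau_y g)=\tau_{\cG y}(\cG g)$ for $\cG=(\theta,\cR)$ (here $\cG y:=\cR y$), and since $f$ is $G'$-invariant while $\cG' x_n=x_n$ for all $\cG'\in G'$ (which is how the sequences $\{x_n\}$ in the profile decompositions are produced, cf.\ \lemref[lemA]), we get $\cG'(\tau_{x_n}f)=\tau_{\cG' x_n}(\cG'f)=\tau_{x_n}f$, hence
\[
\sum_{\cG\in G}\cG(\tau_{x_n}f)=\sum_{k=1}^N\cG_k\!\left(\sum_{\cG'\in G'}\cG'(\tau_{x_n}f)\right)=\#G'\sum_{k=1}^N\cG_k(\tau_{x_n}f).
\]
It therefore suffices to prove $\norm[\sum_{k=1}^N\cG_k(\tau_{x_n}f)]_{\dot{H}^\lambda}^2=N\norm[f]_{\dot{H}^\lambda}^2+o(1)$ and $\norm[\sum_{k=1}^N\cG_k(\tau_{x_n}f)]_{L^p}^p=N\norm[f]_{L^p}^p+o(1)$, since multiplying by $(\#G')^2$ and $(\#G')^p$ respectively gives \eqref[A.3] and \eqref[A.4].

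For \eqref[A.3] I would expand $\norm[\sum_{k}\cG_k(\tau_{x_n}f)]_{\dot{H}^\lambda}^2$ into $N$ diagonal terms and $N(N-1)$ cross terms. Each $\cG_k$ is a unitary operator on $\dot{H}^\lambda(\R^d)$ (a phase composed with a rotation, which preserves $|\xi|$) and $\tau_{x_n}$ is unitary on $\dot{H}^\lambda$, so the diagonal part equals $N\norm[f]_{\dot{H}^\lambda}^2$. For $k\ne l$, unitarity of $\cG_k$ and the commutation relation give
\[
\rbra[\cG_k(\tau_{x_n}f),\cG_l(\tau_{x_n}f)]_{\dot{H}^\lambda}=\rbra[f,\tau_{y_n}(\cG_k^{-1}\cG_l f)]_{\dot{H}^\lambda},\qquad y_n:=\cG_k^{-1}\cG_l x_n-x_n,
\]
which on the Fourier side equals $\int_{\R^d}|\xi|^{2\lambda}\widehat f(\xi)\,\overline{\widehat{\cG_k^{-1}\cG_l f}(\xi)}\,e^{iy_n\cdot\xi}\,d\xi$. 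The integrand is in $L^1(\R^d)$ by Cauchy--Schwarz (its $L^1$ norm is at most $\norm[f]_{\dot{H}^\lambda}\norm[\cG_k^{-1}\cG_l f]_{\dot{H}^\lambda}<\infty$, using $H^1\hookrightarrow\dot{H}^\lambda$ for $\lambda\in[0,1]$), and $|y_n|=|x_n-\cG_k^{-1}\cG_l x_n|\to\infty$ because $\cG_k^{-1}\cG_l\in G\setminus G'$ when $k\ne l$; so the Riemann--Lebesgue lemma makes each cross term $o(1)$.

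For \eqref[A.4] I would use the elementary inequality already invoked in the proof of \propref[LPD], namely $\big||\sum_{k=1}^N z_k|^p-\sum_{k=1}^N|z_k|^p\big|\le C_N\sum_{k\ne l}|z_k|\,|z_l|^{p-1}$, applied pointwise with $z_k=\cG_k(\tau_{x_n}f)(x)$ and integrated over $\R^d$. The diagonal part gives $\sum_k\norm[\cG_k(\tau_{x_n}f)]_{L^p}^p=N\norm[f]_{L^p}^p$, and for $k\ne l$, writing $\cG_k=(\theta_k,\cR_k)$ with $\cR_k\in O(d)$ and changing variables,
\[
\int_{\R^d}|\cG_k(\tau_{x_n}f)(x)|\,|\cG_l(\tau_{x_n}f)(x)|^{p-1}\,dx=\int_{\R^d}|f(y)|\,|f(\cR_l^{-1}\cR_k y+z_n)|^{p-1}\,dy,
\]
where $z_n:=\cR_l^{-1}\cR_k x_n-x_n$ and $|z_n|=|x_n-\cG_l^{-1}\cG_k x_n|\to\infty$ (again $\cG_l^{-1}\cG_k\notin G'$). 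By density of $C_c(\R^d)$ in $L^p(\R^d)$ together with H\"older's inequality this integral tends to $0$: after replacing $f$ by a compactly supported approximant the two factors have disjoint supports for $n$ large, and the H\"older bound controls the approximation error uniformly in $n$.

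Finally I would derive the $S_\omega$-identity by plugging \eqref[A.3] with $\lambda=1$ and $\lambda=0$ and \eqref[A.4] with $p$ replaced by $p+1$ into $S_\omega(g)=\tfrac12\norm[\nabla g]_{L^2}^2+\tfrac\omega2\norm[g]_{L^2}^2-\tfrac1{p+1}\norm[g]_{L^{p+1}}^{p+1}$ (using $\norm[\nabla g]_{L^2}=\norm[g]_{\dot{H}^1}$), collecting the three terms and expanding the right-hand side $\tfrac{\#G}{\#G'}S_\omega(\#G'f)$ term by term; no homogeneity of $S_\omega$ is needed. I expect the main obstacle to be the cross-term estimates of the two middle paragraphs: in the $\dot{H}^\lambda$ case one must verify integrability of the Fourier integrand so that Riemann--Lebesgue applies (this is exactly where $\lambda\in[0,1]$ and $f\in H^1$ enter), and in the $L^p$ case one needs the routine --- but for non-integer $p$ mildly technical --- approximation argument for products of translated $L^p$ functions, splitting the estimate $\big||a|^{p-1}-|b|^{p-1}\big|$ according to whether $p\le 2$ or $p>2$.
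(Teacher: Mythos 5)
Your proof is correct and follows the paper's skeleton for \eqref[A.3] --- reduction to left coset representatives (both you and the paper quietly use $\cG'x_n=x_n$ for $\cG'\in G'$, which is not in the hypothesis but holds for the sequences produced by \lemref[lemA]; you at least flag this), then diagonal terms plus vanishing cross terms; the paper merely asserts that the $\dot{H}^\lambda$ cross terms tend to zero, whereas you supply the Riemann--Lebesgue justification, which is a welcome addition. The genuine divergence is in \eqref[A.4]: the paper proves the $L^p$ identity by an iterated application of the Refined Fatou (Br\'ezis--Lieb) lemma, peeling off one coset representative $\cG_k(\tau_{x_n}f)$ at a time, while you instead use the pointwise inequality $\bigl||\sum_k z_k|^p-\sum_k|z_k|^p\bigr|\leq C\sum_{k\neq l}|z_k||z_l|^{p-1}$ together with a density/disjoint-support argument for the cross integrals. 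Your route is more self-contained and parallels the treatment of $f_n$ in the proof of \lemref[IS], at the cost of requiring $p>1$ for the stated inequality (for $p=1$ one would substitute the bound $\bigl||a+b|-|a|-|b|\bigr|\leq 2\min\{|a|,|b|\}$, or note that the only exponents actually used are $p=2$ and $p+1>2$); the paper's Brezis--Lieb route avoids any case distinction in $p$ and needs no approximation argument, but imports Lemma \ref{RF} and an induction over the cosets. Both are valid.
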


To prove \lemref[lemA.3], we need Refined Fatou's lemma. See {\cite{BL83} and \cite[Theorem 1.9]{LL01}}.

\begin{lemma}[Refined Fatou's lemma] \label{RF}
Let $\{f_n\} \subset L^p(\R^d)$ with $\limsup_{n \to \infty}\norm[f_n]_{L^p}<\infty$. If $f_n \to f$ almost everywhere, then we have
\[ \int_{\R^d} \l|  |f_n|^p -|f_n-f|^p -|f|^p \r|  dx  \to 0 \text{ as } n \to \infty. \]
In particular, $\norm[f_n]_{L^p}^p - \norm[f_n-f]_{L^p}^p \to \norm[f]_{L^p}^p$ as $n \to \infty$. 
\end{lemma}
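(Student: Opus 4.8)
The plan is to prove this by the standard Brezis--Lieb truncation argument, reducing the global $L^1$ statement to pointwise a.e.\ convergence plus a dominated convergence step. First I would record the elementary inequality: for every $\eps>0$ there is a constant $C_\eps>0$ such that
\[ \l| |a+b|^p - |a|^p \r| \leq \eps |a|^p + C_\eps |b|^p \]
for all $a,b \in \C$. This follows from the homogeneity of both sides (divide by $|a|^p$ and set $t=|b|/|a|$) together with the continuity of $s \mapsto |1+s|^p - 1$ near $s=0$ and the bound $|1+s|^p \cleq |s|^p$ for large $|s|$. I would also note at the outset that $f \in L^p(\R^d)$: since $f_n \to f$ a.e.\ and $\limsup_n \norm[f_n]_{L^p}<\infty$, Fatou's lemma gives $\norm[f]_{L^p}^p \leq \liminf_n \norm[f_n]_{L^p}^p < \infty$. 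Consequently $\{f_n - f\}$ is bounded in $L^p$, say $\limsup_n \norm[f_n-f]_{L^p}^p =: M < \infty$.

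Then I would set $g_n := f_n - f$, so that $g_n \to 0$ a.e., and apply the elementary inequality with $a = g_n$ and $b = f$ to obtain the pointwise bound
\[ \l| |f_n|^p - |g_n|^p - |f|^p \r| \leq \l| |f_n|^p - |g_n|^p \r| + |f|^p \leq \eps |g_n|^p + (C_\eps + 1)|f|^p. \]
The decisive step is to introduce the truncated excess
\[ W_n^\eps := \l( \l| |f_n|^p - |g_n|^p - |f|^p \r| - \eps |g_n|^p \r)^+, \]
which by the display above satisfies $0 \leq W_n^\eps \leq (C_\eps+1)|f|^p \in L^1(\R^d)$. Since $f_n \to f$ a.e.\ forces both $|f_n|^p - |g_n|^p - |f|^p \to 0$ and $|g_n|^p \to 0$ a.e., we have $W_n^\eps \to 0$ a.e., so the dominated convergence theorem yields $\int_{\R^d} W_n^\eps\,dx \to 0$ as $n\to\infty$.

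To finish, I would use $\l| |f_n|^p - |g_n|^p - |f|^p \r| \leq W_n^\eps + \eps |g_n|^p$ and integrate:
\[ \int_{\R^d} \l| |f_n|^p - |g_n|^p - |f|^p \r| dx \leq \int_{\R^d} W_n^\eps\, dx + \eps \int_{\R^d} |g_n|^p\, dx. \]
Taking $\limsup_{n\to\infty}$ and using $\int W_n^\eps \to 0$ together with $\limsup_n \norm[g_n]_{L^p}^p \leq M$ gives $\limsup_n \int \l| |f_n|^p - |g_n|^p - |f|^p \r| dx \leq \eps M$. As $\eps>0$ is arbitrary, the left-hand side is $0$, which is exactly the claimed $L^1$ convergence with $g_n = f_n - f$. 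The ``in particular'' statement is then immediate: by the triangle inequality $| \norm[f_n]_{L^p}^p - \norm[f_n-f]_{L^p}^p - \norm[f]_{L^p}^p | \leq \int | |f_n|^p - |f_n-f|^p - |f|^p | \, dx \to 0$.

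The main obstacle is the domination step: the raw quantity $|f_n|^p - |f_n-f|^p - |f|^p$ is not controlled by a single integrable function independent of $n$, so one cannot apply dominated convergence to it directly. The truncation by $\eps|g_n|^p$ and passage to the positive part $W_n^\eps$ is precisely what produces an integrable dominating function $(C_\eps+1)|f|^p$; the price paid is the additive error $\eps M$, which is harmless because $\eps$ is arbitrary. Everything else is the elementary inequality and routine applications of Fatou and dominated convergence.
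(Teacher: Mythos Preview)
Your argument is the standard Brezis--Lieb proof and is correct. The paper does not give its own proof of this lemma but simply cites \cite{BL83} and \cite[Theorem~1.9]{LL01}; the truncation argument you have written is exactly the one found in those references, so there is nothing to compare.
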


\begin{proof}[Proof of {\lemref[lemA.3]}]
Let $\{\cG_{k}\}_{k=1}^{\#G/\#G'}$ be the set of left coset representatives.
First, we prove \eqref[A.3]. 
\begin{align*}
\norm[ \sum_{\cG \in G}  \cG (\tau_{x_n} f)]_{\dot{H}^{\lambda}}^2
&=(\#G')^2 \norm[ \sum_{k=1}^{\#G/\#G'}  \cG_{k} (\tau_{x_n}f )]_{\dot{H}^{\lambda}}^2
\\
&=(\#G')^2 \l\{ \sum_{k=1}^{\#G/\#G'}   \norm[ \cG_{k} (\tau_{x_n} f )]_{\dot{H}^{\lambda}}^2
+ \sum_{\substack{k,l=1\\ k\neq l}}^{\#G/\#G'}  \rbra[ \cG_{k} (\tau_{x_n} f),  \cG_{l} (\tau_{x_n} f )]_{\dot{H}^{\lambda}} \r\}
\\
&=  \frac{\#G}{\#G'}  \norm[ \#G' f]_{\dot{H}^{\lambda}}^2
\\ & \quad +(\#G')^2 \sum_{\substack{ k,l=1\\ k\neq l}}^{\#G/\#G'}  \rbra[\cG_{k} (\tau_{x_n} f ),  \cG_{l} (\tau_{x_n}f )]_{\dot{H}^{\lambda}}
\end{align*}
The second term tends to $0$ as $n \to \infty$ since $|x_n - \cG x_n| \to \infty$ as $n\to \infty$ for $\cG \in G\setminus G'$ and $\cG_k^{-1}\cG_l \not\in \cG'$ for $k\neq l$. This implies \eqref[A.3].  Next, we prove \eqref[A.4]. Without loss of generality, we may assume that $\cG_1=(0,\cI_{d})$.
\begin{align*}
\norm[ \sum_{\cG \in G} \cG (\tau_{x_n} f )]_{L^{p}}^{p}
& = (\#G')^{p} \norm[ \sum_{k=1}^{\#G/\#G'} \cG_{k} (\tau_{x_n} f )]_{L^{p}}^{p}
\\
& = (\#G')^{p} \norm[ \tau_{x_n} f + \sum_{k=2}^{\#G/\#G'} \cG_{k} (\tau_{x_n} f )]_{L^{p}}^{p}
\\
& = (\#G')^{p} \norm[ f + \sum_{k=2}^{\#G/\#G'}  \tau_{-x_n + \cG_{k}x_n  }  \cG_{k} f ]_{L^{p}}^{p}.
\end{align*}
We set $f_n^1:= f + \sum_{k=2}^{\#G/\#G'}  \tau_{-x_n + \cG_{k}x_n  }  \cG_{k} f$. Then, $\sup_{n\in \N} \norm[f_n^1]_{L^p}<\infty$ and $f_n^1 \to f$ almost everywhere since $|x_n - \cG x_n| \to \infty$ as $n\to \infty$ for $\cG \in G\setminus G'$. Therefore, Refined Fatou's lemma gives us 
\[ \norm[f_{n}^1 ]_{L^{p}}^{p} - \norm[ f_n^1-f ]_{L^{p}}^{p} -\norm[ f  ]_{L^{p}}^{p} \to 0.\]
Here, $f_n^1-f= \sum_{k=2}^{\#G/\#G'}  \tau_{-x_n + \cG_{k}x_n  }  \cG_{k} f $ and 
\begin{align*} 
\norm[ \sum_{k=2}^{\#G/\#G'}  \tau_{-x_n + \cG_{k}x_n  }  \cG_{k} f ]_{L^p}^p
& =\norm[ \sum_{k=2}^{\#G/\#G'}  \tau_{\cG_{k}x_n  }  \cG_{k} f ]_{L^p}^p
\\ 
&= \norm[  \tau_{\cG_{2}x_n  }  \cG_{2} f  + \sum_{k=3}^{\#G/\#G'}  \tau_{\cG_{k}x_n  }  \cG_{k} f ]_{L^p}^p
\\ 
&= \norm[ \cG_{2}( \tau_{x_n  }  f)  + \sum_{k=3}^{\#G/\#G'}  \tau_{\cG_{k}x_n  }  \cG_{k} f ]_{L^p}^p
\\ 
&= \norm[  f  + \sum_{k=3}^{\#G/\#G'}\tau_{ -x_n + \cG_{2}^{-1} \cG_{k}x_n  }   \cG_{2}^{-1}  \cG_{k} f ]_{L^p}^p
\end{align*}
Let $f_n^2:=f  + \sum_{k=3}^{\#G/\#G'}\tau_{ -x_n+ \cG_{2}^{-1} \cG_{k}x_n  }   \cG_{2}^{-1}  \cG_{k} f $. Then, $\sup_{n\in \N} \norm[f_n^2]_{L^p}<\infty$ and $f_n^2 \to f$ almost everywhere since $|x_n - \cG x_n| \to \infty$ as $n\to \infty$ for $\cG \in G\setminus G'$. Therefore, by Refined Fatou's lemma again, we get
\[ \norm[f_{n}^2 ]_{L^{p}}^{p} - \norm[ f_n^2-f ]_{L^{p}}^{p} -\norm[ f  ]_{L^{p}}^{p} \to 0.\]
By repeating this procedure, we get
\[ \norm[f_n^1]_{L^p}^p - \norm[f_n^{\#G/\#G'-1}-f]_{L^p}^p -( \#G/\#G' -1)\norm[f]_{L^p}^p \to 0,\]
where $f_n^{\#G/\#G'-1} :=f+\tau_{ -x_n+\cG_{\#G/\#G'-1}^{-1} \cdots  \cG_{2}^{-1} \cG_{\#G/\#G'}x_n  } \cG_{\#G/\#G'-1}^{-1} \cdots  \cG_{2}^{-1} \cG_{k} f$. Now, we have
\begin{align*}  
\norm[f_n^{\#G/\#G'-1}-f]_{L^p}^p 
&=\norm[\tau_{ -x_n+\cG_{\#G/\#G'-1}^{-1} \cdots  \cG_{2}^{-1} \cG_{\#G/\#G'}x_n  } \cG_{\#G/\#G'-1}^{-1} \cdots  \cG_{2}^{-1} \cG_{k} f]_{L^p}^p
\\
& =\norm[f]_{L^p}^p. 
\end{align*}
Thus, we obtain
\begin{align*}
\norm[ \sum_{\cG \in G} \cG (\tau_{x_n} f )]_{L^{p}}^{p}  
= (\#G')^{p} \norm[ f_n^1]_{L^{p}}^{p} 
\to (\#G')^{p}  \frac{\#G}{\#G'}\norm[f]_{L^p}^p.
\end{align*}
This completes the proof.
\end{proof}

\begin{lemma}
\label{lemA.2}
Let $\{t_n\}\subset \R$, $\{x_n\} \subset \R^d$, $\{f_n\} \subset H^1(\R^d)$ and $\psi \in H^1(\R^d) \setminus \{0\}$ satisfy 
\[ f_n \wto 0 \text{ and } e^{it_n \Delta} f_n (\cdot + x_n) \wto \psi \text{ as } n \to \infty \text{ in } H^1(\R^d).\]
Then, $|t_n|\to \infty$ or  $|x_n| \to \infty$ as $n \to \infty$ taking a subsequence.
\end{lemma}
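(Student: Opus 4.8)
The plan is to argue by contradiction. If the conclusion fails, then neither $\{t_n\}$ nor $\{x_n\}$ admits a subsequence tending to infinity, so both sequences are bounded in their respective spaces. Passing to a subsequence, we may therefore assume $t_n \to t_\infty \in \R$ and $x_n \to x_\infty \in \R^d$. I will show that this forces $e^{it_n\Delta} f_n(\cdot + x_n) \wto 0$ in $H^1(\R^d)$, which contradicts the hypothesis $e^{it_n\Delta} f_n(\cdot + x_n) \wto \psi$ with $\psi \neq 0$.

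For the main step, fix an arbitrary $g \in H^1(\R^d)$. Since $\tau_{-x_n}$ and $e^{it_n\Delta}$ are unitary on $H^1(\R^d)$ with adjoints $\tau_{x_n}$ and $e^{-it_n\Delta}$ respectively (the Fourier multiplier $\langle\xi\rangle$ defining the $H^1$ inner product commutes with both), we have
\[ \langle e^{it_n\Delta} f_n(\cdot + x_n),\, g \rangle_{H^1} = \langle f_n,\, \tau_{x_n} e^{-it_n\Delta} g \rangle_{H^1}. \]
By the strong continuity of the Schr\"odinger group and of the translation group on $H^1(\R^d)$ (both visible on the Fourier side via dominated convergence in the weight $\langle\xi\rangle^2|\widehat{g}|^2$), one gets $\tau_{x_n} e^{-it_n\Delta} g \to \tau_{x_\infty} e^{-it_\infty\Delta} g$ strongly in $H^1(\R^d)$. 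Write $\tau_{x_n} e^{-it_n\Delta} g = \tau_{x_\infty} e^{-it_\infty\Delta} g + r_n$ with $\norm[r_n]_{H^1} \to 0$. Then $\langle f_n, \tau_{x_\infty} e^{-it_\infty\Delta} g\rangle_{H^1} \to 0$ because $f_n \wto 0$, while $|\langle f_n, r_n\rangle_{H^1}| \leq \norm[f_n]_{H^1}\norm[r_n]_{H^1} \to 0$ because $\{f_n\}$ is bounded in $H^1$ (a weakly convergent sequence is bounded). Hence $\langle e^{it_n\Delta} f_n(\cdot + x_n), g\rangle_{H^1} \to 0$ for every $g \in H^1(\R^d)$, i.e. $e^{it_n\Delta} f_n(\cdot + x_n) \wto 0$, as claimed.

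The only point requiring care — and the reason the argument is carried out against a fixed test function $g$ rather than in operator norm — is that $\tau_{x_n} e^{-it_n\Delta} - \tau_{x_\infty} e^{-it_\infty\Delta}$ does \emph{not} tend to $0$ in the operator norm on $H^1$, so the difference cannot simply be absorbed uniformly. What rescues the estimate is precisely that the two one-parameter groups are strongly continuous; combined with the uniform $H^1$-bound on $\{f_n\}$ (and with $f_n \wto 0$), this is exactly enough. I do not expect any further difficulty beyond this bookkeeping.
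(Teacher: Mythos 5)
Your proof is correct and follows essentially the same route as the paper, which disposes of the lemma in one line: if $|t_n|+|x_n|$ stays bounded then $e^{it_n\Delta}f_n(\cdot+x_n)\wto 0$, contradicting $\psi\neq 0$. You have merely supplied the standard details (passing to the adjoint $\tau_{x_n}e^{-it_n\Delta}g$, strong continuity of the translation and Schr\"odinger groups, and the uniform $H^1$-bound on $\{f_n\}$) that the paper leaves implicit, and these are all accurate.
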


\begin{proof}
If $|t_n| + |x_n|$ is bounded, then $f_n (\cdot + x_n) \wto 0$ since $f_n \wto 0 $. This contradicts $\psi \neq 0$.
See also \cite[Lemma 5.3]{FXC11}. 
\end{proof}

\begin{lemma}[{\cite[Proposition A.1]{BV16}}] \label{lemA.6}
For $j \in \{ 1,2\}$, let $V^j \in C(\R:H^1(\R^d)) \cap L^{\alpha}(\R:L^r(\R^d))$ and $\{(t_n^j,x_n^j)\}_{n\in\N}\subset \R \times \R^d$ satisfy $|t_n^1-t_n^2|\to \infty$ or $|x_n^1-x_n^2|\to \infty$ as $n \to \infty$. Then, we have
\[ \norm[ |V^1(\cdot - t_n^1,\cdot - x_n^1)|^{p-1}|V^2(\cdot - t_n^2,\cdot - x_n^2)|]_{L^{\beta'}(\R:L^{r'} )}\to 0.\]
\end{lemma}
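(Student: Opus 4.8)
The plan is to prove this by approximating the $V^j$ by compactly supported functions and then exploiting that the translated copies eventually have disjoint space--time supports. Write $V^j_n(t,x):=V^j(t-t_n^j,x-x_n^j)$; since translations in $t$ and in $x$ are isometries of $L^\alpha(\R:L^r)$, we have $\norm[V^j_n]_{L^\alpha(\R:L^r)}=\norm[V^j]_{L^\alpha(\R:L^r)}=:M_j$ for all $n$. The first ingredient is the elementary Hölder bound: for all $F,G\in L^\alpha(\R:L^r)$,
\begin{equation*}
\norm[|F|^{p-1}|G|]_{L^{\beta'}(\R:L^{r'})}\le \norm[F]_{L^\alpha(\R:L^r)}^{p-1}\,\norm[G]_{L^\alpha(\R:L^r)}.
\end{equation*}
Indeed, since $r=p+1$ we have $\frac{p-1}{r}+\frac1r=\frac1{r'}$, so Hölder in $x$ gives $\norm[|F(t)|^{p-1}|G(t)|]_{L^{r'}}\le\norm[F(t)]_{L^r}^{p-1}\norm[G(t)]_{L^r}$; and a direct computation from the definitions of $\alpha$ and $\beta$ shows $\frac{p-1}{\alpha}+\frac1\alpha=\frac p\alpha=1-\frac1\beta=\frac1{\beta'}$, so Hölder in $t$ closes the bound.

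Next I would reduce to compactly supported profiles. Fix $\varepsilon>0$; using density of $C_c(\R\times\R^d)$ in $L^\alpha(\R:L^r)$ (valid since $\alpha,r<\infty$), pick $W^j\in C_c(\R\times\R^d)$ with $\norm[V^j-W^j]_{L^\alpha(\R:L^r)}<\varepsilon$, and assume $\norm[W^j]_{L^\alpha(\R:L^r)}\le M_j+1$. Writing $W^j_n$ for the corresponding translate and using
\begin{equation*}
|V^1_n|^{p-1}|V^2_n|-|W^1_n|^{p-1}|W^2_n|=\big(|V^1_n|^{p-1}-|W^1_n|^{p-1}\big)|V^2_n|+|W^1_n|^{p-1}\big(|V^2_n|-|W^2_n|\big),
\end{equation*}
together with the elementary inequalities $\big||a|^{p-1}-|b|^{p-1}\big|\lesssim(|a|^{p-2}+|b|^{p-2})|a-b|$ for $p\ge 2$ and $\big||a|^{p-1}-|b|^{p-1}\big|\lesssim|a-b|^{p-1}$ for $1<p\le 2$, the Hölder bound above yields a constant $C=C(M_1,M_2)$, independent of $n$, with
\begin{equation*}
\norm[|V^1_n|^{p-1}|V^2_n|-|W^1_n|^{p-1}|W^2_n|]_{L^{\beta'}(\R:L^{r'})}\le C(\varepsilon+\varepsilon^{p-1}).
\end{equation*}

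Finally, if $\supp W^j\subset[-T,T]\times\overline{B_R}$ then $W^j_n$ is supported in $[t_n^j-T,t_n^j+T]\times\overline{B_R(x_n^j)}$. When $|t_n^1-t_n^2|\to\infty$ the time-projections of these two supports become disjoint once $|t_n^1-t_n^2|>2T$, and when $|x_n^1-x_n^2|\to\infty$ the spatial supports become disjoint (at every time) once $|x_n^1-x_n^2|>2R$; in either case $|W^1_n|^{p-1}|W^2_n|\equiv 0$ for all large $n$. Hence $\limsup_{n\to\infty}\norm[|V^1_n|^{p-1}|V^2_n|]_{L^{\beta'}(\R:L^{r'})}\le C(\varepsilon+\varepsilon^{p-1})$, and letting $\varepsilon\downarrow 0$ gives the claim. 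The computation pinning down the Hölder exponents, and the case split $p\ge 2$ versus $1<p\le 2$ in the difference-of-powers estimate (one must also check the auxiliary exponents arising there lie in $[1,\infty]$, which they do in the admissible range of $p$), are the only points requiring care; everything else is soft, and I do not anticipate a genuine obstacle.
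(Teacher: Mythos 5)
Your proof is correct. The exponent arithmetic checks out: with $r=p+1$ one has $r=pr'$, and a direct computation from \eqref{3.0} confirms $\alpha=p\beta'$, so the H\"older bound $\norm[\,|F|^{p-1}|G|\,]_{L^{\beta'}L^{r'}}\le\norm[F]_{L^{\alpha}L^{r}}^{p-1}\norm[G]_{L^{\alpha}L^{r}}$ holds (this same relation is what the paper uses in \lemref[lemA.4]); since $\alpha,r<\infty$ the density of $C_c(\R\times\R^d)$ is available; and the difference-of-powers estimates with the case split at $p=2$ give the uniform-in-$n$ error $C(\eps+\eps^{p-1})$. Your route is genuinely different from the paper's. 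The paper splits into two cases: when $|t_n^1-t_n^2|\to\infty$ it applies H\"older in $x$ first, reducing to a product of two scalar functions of $t$ with diverging relative time shifts (so the separation argument is carried out in the time variable only); when $\{t_n^1-t_n^2\}$ is bounded and $|x_n^1-x_n^2|\to\infty$ it cuts off the time tails $|t|>T$, shows the integrand tends to $0$ in $L^{r'}_x$ for each fixed $t$ using that $|V^1(t)|^{p-1}\in L^{r/(p-1)}$ pairs to zero against a spatial translate escaping to infinity, and concludes by dominated convergence on $[-T,T]$. Your single space--time compact-support approximation handles both cases uniformly and avoids the pointwise-in-$t$ argument entirely; the paper's version avoids the difference-of-powers estimates and the $p\gtrless2$ case split, at the cost of two separate mechanisms. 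One very minor point common to both write-ups: the hypothesis is a disjunction of two divergence statements, and if one wants to cover sequences where, say, $|t_n^1-t_n^2|$ is unbounded but not divergent, one should invoke the standard subsequence-of-a-subsequence reduction; this is cosmetic and does not affect the substance of your argument.
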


\begin{proof}
First, we assume that $|t_n^1-t_n^2| \to \infty$. Then, since we have
\begin{align*}
&\norm[ |V^1(\cdot-t_n^1,\cdot-x_n^1)|^{p-1} |V^2(\cdot-t_n^2,\cdot-x_n^2)| ]_{L^{\beta'}(\R:L^{r'})} 
\\
&\leq \norm[ \norm[ V^1(\cdot-t_n^1)]_{L_x^r}^{p-1} \norm[ V^2(\cdot-t_n^2)]_{L_x^r} ]_{L_t^{\beta'}(\R)},
\end{align*}
we obtain the statement. Next we assume that $\{t_n^1-t_n^2\}$ is bounded and $|x_n^1-x_n^2|\to \infty$ as $n \to \infty$. 
We note that
\begin{align*}
&\norm[ |V^1(\cdot,\cdot-x_n^1)|^{p-1} |V^2(\cdot+t_n^1-t_n^2,\cdot-x_n^2)| ]_{L^{\beta'}(\{|t|>T\}:L^{r'})} 
\\
&\leq \norm[ V^1]_{L^{\alpha}(\{|t|>T\}:L^{r})} ^{p-1}
\norm[ V^2(\cdot+t_n^1-t_n^2) ]_{L^{\alpha}(\{|t|>T\}:L^{r})}  \to 0 \text{ as } T \to \infty.
\end{align*}
Thus, it suffices to prove that 
\begin{equation}  \label{A.5.0}
\norm[ |V^1(\cdot,\cdot-x_n^1)|^{p-1} |V^2(\cdot+t_n^1-t_n^2,\cdot-x_n^2)| ]_{L^{\beta'}(\{|t|\leq T\}:L^{r'})} \to 0 \text{ as } n \to \infty, 
\end{equation}
for any fixed $T>0$. We have, for any $t$, 
\begin{align*}  
&\norm[ |V^1(t,\cdot-x_n^1)|^{p-1} |V^2(t+t_n^1-t_n^2,\cdot-x_n^2)| ]_{L^{r'}}
\\
&= \norm[ |V^1(t)|^{p-1} |V^2(t+t_n^1-t_n^2,\cdot+x_n^1-x_n^2)| ]_{L^{r'}}
\end{align*}
By the Sobolev embedding, we have $|V^1(t)|^{p-1} \in L^{\frac{r}{p-1}}$ for any $t$. Moreover, we see that $\{V^2(t+t_n^1-t_n^2,x): x\in \R^d\}$ is compact in $L^r(\R^d)$ for any $t$ since $\{t_n^1-t_n^2\}$ is bounded. Thus, by $|x_n^1-x_n^2|\to \infty$, we see that
\[  \norm[ |V^1(t)|^{p-1} |V^2(t+t_n^1-t_n^2,\cdot+x_n^1-x_n^2)| ]_{L^{r'}} \to 0 \text{ as } n \to \infty \text{ for any } t. \]
On the other hand, since $\{t_n^1-t_n^2\}$ is bounded and $V^j$ belongs to $C(\R:H^1(\R^d))$ for $j\in\{1,2\}$, the Sobolev embbeding gives us that
\begin{align*}
\sup_{t \in [-T,T]}  & \norm[ |V^1(t)|^{p-1} |V^2(t+t_n^1-t_n^2,\cdot+x_n^1-x_n^2)| ]_{L^{r'}}
\\
& \leq \sup_{t \in [-T,T]}  \l( \norm[ V^1(t)]_{L^{r}}  \norm[ V^2(t+t_n^1-t_n^2)]_{L^{r}}\r)
\\
& \cleq \sup_{t \in [-T,T]}  \l( \norm[ V^1(t)]_{H^1}  \norm[ V^2(t+t_n^1-t_n^2)]_{H^1}\r)
\\
&<C<\infty.
\end{align*}
Therefore, by the Lebesgue dominated convergence theorem, we obtain \eqref[A.5.0].
\end{proof}

\begin{lemma}[{\cite[Corollary A.2]{BV16}, \cite[Lemma 5.5]{IMN11}}]
\label{lemA.4}
For $j \in \{ 1,2,\cdots, J\}$, let $V^j \in C(\R:H^1(\R^d)) \cap L^{\alpha}(\R:L^r(\R^d))$ and $\{(t_n^j,x_n^j)\}_{n\in\N}\subset \R \times \R^d$ satisfy $|t_n^j-t_n^h|\to \infty$ or $|x_n^j-x_n^h|\to \infty$ as $n \to \infty$ if $j\neq h$. Then, we have
\[ \limsup_{n \to \infty} \norm[\sum_{j=1}^{J} V_n^j]_{L^{\alpha}(\R:L^r )} \leq 2 \sum_{j=1}^{J}  \norm[V^j]_{L^{\alpha}(\R:L^r )},\]
where $V_n^j(t,x):=V^j(t+t_n^j,x-x_n^j)$.
\end{lemma}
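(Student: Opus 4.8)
The plan is to reduce to space--time compactly supported profiles and then exploit that the translated supports become pairwise disjoint. First I would fix $\eps>0$; since $1\le\alpha,r<\infty$, the continuous functions with compact support in $\R\times\R^d$ are dense in $L^\alpha(\R:L^r(\R^d))$, so I may choose $\widetilde V^j$ of this type with $\norm[V^j-\widetilde V^j]_{L^\alpha(\R:L^r)}<\eps$ for each $j\in\{1,\dots,J\}$. Enlarging the supports if necessary, fix a common $T,R>0$ with $\supp\widetilde V^j\subset\{\,|t|\le T\,\}\times\{\,|x|\le R\,\}$ for all $j$, and set $\widetilde V_n^j(t,x):=\widetilde V^j(t+t_n^j,x-x_n^j)$, so that $\supp\widetilde V_n^j\subset\{\,|t+t_n^j|\le T\,\}\times\{\,|x-x_n^j|\le R\,\}$.

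Next I would verify pairwise disjointness of these supports for large $n$. Fix $j\ne h$; by hypothesis either $|t_n^j-t_n^h|\to\infty$ or $|x_n^j-x_n^h|\to\infty$. In the first case, once $|t_n^j-t_n^h|>2T$ the two time slabs are disjoint; in the second, once $|x_n^j-x_n^h|>2R$ the two spatial balls are disjoint. Either way $\supp\widetilde V_n^j\cap\supp\widetilde V_n^h=\emptyset$ for all large $n$. Hence, for each fixed $t$, the functions $x\mapsto\widetilde V_n^j(t,x)$ have disjoint supports, so
\[
\norm[\sum_{j=1}^J\widetilde V_n^j(t)]_{L^r(\R^d)}
=\Bigl(\sum_{j=1}^J\norm[\widetilde V_n^j(t)]_{L^r(\R^d)}^r\Bigr)^{1/r}
\le\sum_{j=1}^J\norm[\widetilde V_n^j(t)]_{L^r(\R^d)},
\]
and taking the $L^\alpha_t(\R)$ norm together with the translation invariance of the Lebesgue norms yields $\norm[\sum_{j=1}^J\widetilde V_n^j]_{L^\alpha(\R:L^r)}\le\sum_{j=1}^J\norm[\widetilde V^j]_{L^\alpha(\R:L^r)}$.

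Finally I would assemble the bound. By the triangle inequality in $L^\alpha(\R:L^r)$ and the isometry of space--time translations, for all large $n$,
\[
\norm[\sum_{j=1}^J V_n^j]_{L^\alpha(\R:L^r)}
\le\norm[\sum_{j=1}^J\widetilde V_n^j]_{L^\alpha(\R:L^r)}+\sum_{j=1}^J\norm[V^j-\widetilde V^j]_{L^\alpha(\R:L^r)}
\le\sum_{j=1}^J\norm[V^j]_{L^\alpha(\R:L^r)}+2J\eps,
\]
so $\limsup_{n\to\infty}\norm[\sum_{j=1}^J V_n^j]_{L^\alpha(\R:L^r)}\le\sum_{j=1}^J\norm[V^j]_{L^\alpha(\R:L^r)}+2J\eps$, and letting $\eps\downarrow0$ gives the claim (indeed with the stated factor $2$ to spare). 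I do not expect a genuine obstacle: the only points that need care are the case split in the disjointness step and the observation that the approximation error is independent of $n$ precisely because translation acts isometrically on each Lebesgue norm involved. Incidentally, since the Strichartz space here is the single mixed-norm space $L^\alpha(\R:L^r(\R^d))$, the triangle inequality by itself already gives $\norm[\sum_j V_n^j]_{L^\alpha(\R:L^r)}\le\sum_j\norm[V^j]_{L^\alpha(\R:L^r)}$ with no use of the orthogonality of the parameters; the disjoint-support argument is the robust version that transfers to the finer Strichartz norms of \cite{BV16,IMN11} and is a companion to \lemref[lemA.6].
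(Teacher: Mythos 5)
Your proof is correct, but it follows a genuinely different route from the paper's. The paper exploits the exponent relations $\alpha=p\beta'$ and $r=pr'$ to write $\norm[\sum_{j}V_n^j]_{L^{\alpha}(\R:L^r)}=\norm[ \l|\sum_{j}V_n^j\r|^{p} ]_{L^{\beta'}(\R:L^{r'})}^{1/p}$, splits off the diagonal part $\sum_j|V_n^j|^{p}$ by means of the elementary inequality $\l| \l|\sum_j z_j\r|^{p}-\sum_j|z_j|^{p}\r|\leq C\sum_{j\neq k}|z_j|\,|z_k|^{p-1}$, and disposes of the cross terms with \lemref[lemA.6]; the factor $2$ is just $2^{1/p}$ rounded up. Your density-plus-disjoint-supports argument needs neither the exponent identity nor \lemref[lemA.6], and is the standard robust version of such decoupling statements (though note that after establishing disjointness you only use the resulting $\ell^r$ decoupling in the weaker $\ell^1$ form, so that step could itself be replaced by the triangle inequality). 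Your closing observation is accurate and worth stressing: since space--time translations act isometrically on the single mixed norm $L^{\alpha}(\R:L^r)$, the triangle inequality alone already gives the conclusion with constant $1$ and without any orthogonality hypothesis; the orthogonality of the parameters is genuinely needed in this paper only for the companion estimate \eqref[3.27] on the error $e_n^J$, which is where \lemref[lemA.6] actually earns its keep.
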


\begin{proof}
We have
\begin{align*} 
\norm[\sum_{j=1}^{J} V_n^j]_{L^{\alpha}(\R:L^r )} 
&= \norm[ \l|\sum_{j=1}^{J} V_n^j\r|^{p} ]_{L^{\beta'}(\R:L^{r'} )}^{\frac{1}{p}}
\\
& \leq \l( \norm[ \l|\sum_{j=1}^{J} V_n^j\r|^{p} - \sum_{j=1}^{J} |V_n^j|^{p}  ]_{L^{\beta'}(\R:L^{r'} )}+  \norm[ \sum_{j=1}^{J} |V_n^j|^{p}  ]_{L^{\beta'}(\R:L^{r'} )} \r)^{\frac{1}{p}}
\\
& \leq 2^{\frac{1}{p}} \norm[ \l|\sum_{j=1}^{J} V_n^j\r|^{p} - \sum_{j=1}^{J} |V_n^j|^{p}  ]_{L^{\beta'}(\R:L^{r'} )}^{\frac{1}{p}} + 2^{\frac{1}{p}} \sum_{j=1}^{J}  \norm[ V_n^j ]_{L^{\alpha}(\R:L^{r} )} 
\end{align*}
By the inequality
\[ \l| \l|  \sum_{j=1}^J z_j \r|^{p} - \sum_{j=1}^J |z_j|^{p} \r| \leq C \sum_{j\neq k} |z_j||z_k|^{p-1},\]
and \lemref[lemA.6], we obtain the statement. 
\end{proof}

\section{Applications}
\label{secB}

We introduce some applications of \thmref[thm1.1] in this appendix. Here, we only treat examples in one and two dimensional cases.

\noindent\underline{In the one dimensional case.}
We have only three subgroups of $\R/2\pi\Z \times O(1)$ satisfying the assumption (A). Namely, we have
\begin{align*}
G_0&:=\{(0,1)\},
\\
G_{even}&:=\{ (0,1),(0,-1)\},
\\
G_{odd}&:=\{ (0,1), (\pi,-1)\}.
\end{align*}
When $G=G_0$, we can classify the solutions with $S_{\omega}<l_{\omega}$ into scattering and blow-up by the functional $K$ by \thmref[thm1.1.0] (\cite{FXC11, AN13}). Noting that $Q_{\omega}$ is radially symmetric, we can classify the even solutions (\textit{i.e.} in the case of $G=G_{even}$) with $S_{\omega}<l_{\omega}$.
On the other hand, by \thmref[thm1.1], we can classify the odd solutions (\textit{i.e.} $G=G_{odd}$) with 
\[ S_{\omega} < \min\{ 2l_{\omega}, l_{\omega}^{G_{odd}} \}=2l_{\omega}, \]
where we have used $\bS_{\omega}^{G_0}=l_{\omega}$ and the fact that $l_{\omega}^{G_{odd}}=2l_{\omega}$.
This means that we can classify the solutions above the ground state standing waves by oddness. 

\noindent\underline{In the two dimensional case.}
Unlike the one dimensional case, we have many subgroups in the two dimensional case. Here, we only introduce three applications.  

\noindent(1). We consider 
\[ G=\l\{ 
\l(0, \begin{pmatrix} 1 & 0 \\ 0 & 1 \end{pmatrix} \r), 
\l(\pi, \begin{pmatrix} 0 & -1 \\ 1 & 0 \end{pmatrix} \r),
\l(\pi, \begin{pmatrix} 0 & 1 \\ -1 & 0 \end{pmatrix} \r),
\l(0, \begin{pmatrix} -1 & 0 \\ 0 & -1 \end{pmatrix} \r)
\r\}.\]
The proper subgroups of $G$ are as follows. 
\begin{align*}
G_0
&:=\l\{ 
\l(0, \begin{pmatrix} 1 & 0 \\ 0 & 1 \end{pmatrix} \r)
\r\},
\\
G_1
&:=\l\{ 
\l(0, \begin{pmatrix} 1 & 0 \\ 0 & 1 \end{pmatrix} \r), 
\l(0, \begin{pmatrix} -1 & 0 \\ 0 & -1 \end{pmatrix} \r)
\r\}.
\end{align*}
By \thmref[thm1.1], we classify the solutions with $G$-invariance satisfying
\[ S_{\omega} < \min\{ 4l_{\omega}, l_{\omega}^{G} \},\]
where we note that $G_1$ does not satisfy the condition {\rm ($\ast$)}. %We have $l_{\omega}^{G}=4l_{\omega}^{G_0}$. Thus, we can classify the solutions above the ground state standing waves by the symmetry. 
Since $l_{\omega}^{G} >l_{\omega}$, we can classify the solutions above the ground state standing waves by the group invariance. 

\noindent(2). We consider 
\[ G=\l\{ 
\l(0, \begin{pmatrix} 1 & 0 \\ 0 & 1 \end{pmatrix} \r), 
\l(\pi, \begin{pmatrix} -1 & 0 \\ 0 & 1 \end{pmatrix} \r)
\r\}.\]
By \thmref[thm1.1], we classify the solutions with $G$-invariance satisfying
\[ S_{\omega} < \min\{ 2l_{\omega}, l_{\omega}^{G} \}=2l_{\omega},\] 
where we have used the fact that $l_{\omega}^{G}=2l_{\omega}$. 

\noindent(3). We consider 
\[ G=\l\{ 
\l(\theta, \begin{pmatrix} \cos \theta & -\sin \theta \\ \sin\theta & \cos \theta \end{pmatrix} \r):
\theta \in [0,2\pi)
\r\}.\]
Then, the embedding $H_{G}^1 \hookrightarrow L^{p+1}(\R^d)$ is compact since $Gx$ has infinitely many elements for $x \neq 0$. By \thmref[thm1.1], we can classify the solutions such that $S_{\omega}< l_{\omega}^{G}$.

%%%%%%%%%%%%%%%%%%%%%%%%%%%%%%%%%%%%%%%%%%%%%%%%%%%%%%%
%%%%%%%%%%%%%%%%%%%%%%%%%%%%%%%%%%%%%%%%%%%%%%%%%%%%%%%

\section*{Acknowledgement}
 The authors would like to express deep appreciation to Professor Kenji Nakanishi and Doctor Masahiro Ikeda for many useful suggestions, comments and constant encouragement. The author  is supported by Grant-in-Aid for JSPS Research Fellow 15J02570.

%%%%%%%%%%%%%%%%%%%%%%%%%%%%%%%%%%%%%%%%%%%%%%%%%%%%%%%

\end{document}